\newtheorem{theorem}{Theorem}[section]
\newtheorem{lemma}[theorem]{Lemma}
\def\ifl{\iffalse }
\def\bc{\begin{center}}       \def\ec{\end{center}}
\def\ba{\begin{array}}        \def\ea{\end{array}}
\def\be{\begin{equation}}     \def\ee{\end{equation}}
\def\bea{\begin{eqnarray}}    \def\eea{\end{eqnarray}}
\def\beaa{\begin{eqnarray*}}  \def\eeaa{\end{eqnarray*}}
\numberwithin{equation}{section}
\newtheorem{proposition}[theorem]{Proposition}
\newtheorem{remark}[theorem]{Remark}
\numberwithin{equation}{section}
\begin{document}

\title[Chemotaxis system, logistic source, and boundedness]
{How strong  a logistic damping can  prevent blow-up for the minimal  Keller-Segel chemotaxis system? }

\author{Tian Xiang}
\address{Institute for Mathematical Sciences, Renmin University of China, Beijing,  100872, China}
\email{txiang@ruc.edu.cn}

\subjclass[2000]{Primary:  35K59, 	35K51,  35K57, 92C17; Secondary: 35B44, 35A01.}


\keywords{Chemotaxis systems, logistic dampening,  boundedness,  global existence, blow-up, long time dynamics.}

\begin{abstract}
We study  nonnegative solutions  of  parabolic-parabolic Keller-Segel  minimal-chemotaxis-growth systems with prototype given by
$$\left\{ \begin{array}{lll}
&u_t = \nabla \cdot (d_1\nabla u-\chi u\nabla v)+\kappa u-\mu u^2,  &\quad x\in \Omega, t>0, \\[0.2cm]
& v_t =d_2 \Delta v  -\beta v+\alpha u,  &\quad x\in \Omega, t>0  \end{array}\right.
$$
 in a  smooth bounded smooth but not necessarily  \it{convex} domain  $\Omega\subset \mathbb{R}^n$ ($ n\geq3$) with nonnegative initial data $u_0, v_0$ and homogeneous Neumann boundary data, where $d_1,d_2,\alpha,\beta, \mu>0$, $\chi, \kappa\in \mathbb{R}$.

 We provide  quantitative and qualitative descriptions of the competition between logistic damping and other ingredients, especially,  chemotactic aggregation to guarantee boundedness and convergence. Specifically,  we first obtain an explicit  formula   $\mu_0=\mu_0(n, d_1,d_2, \alpha, \chi)$ for the logistic  damping rate $\mu$ such  that the system has no blow-ups whenever $\mu>\mu_0$. In particular,  for $\Omega\subset \mathbb{R}^3$, we get a clean formula for $\mu_0$:
$$
\mu_0(3, d_1,d_2,\alpha, \chi)= \left\{ \begin{array}{ll}
&\frac{3}{4d_1}\alpha\chi, \quad \text{if } d_1=d_2, \chi>0  \text{   and  } \Omega \text{  is convex},   \\[0.25cm]
&\frac{3}{\sqrt{10}-2}(\frac{1}{d_1}+\frac{2}{d_2})\alpha |\chi|,  \quad \quad  \text{  otherwise}. \end{array}\right.
$$
 This offers a quantized effect of the logistic source on the prevention of blow-ups. Our result  extends the fundamental  boundedness principle  by Winkler \cite{Win10} with $d_1=1, d_2=\alpha=\beta:=1/\tau$,  $\Omega$ being  convex and sufficiently large values of $\mu$ beyond a certain number not explicitly known (except the simple case $\tau=1$ and $\chi>0$) and quantizes the qualitative  result of Yang et al. \cite{YCJZ15}. Besides, in non-convex domains, since $\mu_0(3,1,1,1, \chi)=(7.743416\cdots) \chi.$
 the recent  boundedness result, $\mu> 20\chi$, of Mu and Lin \cite{ML16} is greatly improved.

 Then we derive another explicit formula:  $\mu_1=0$ for $\kappa\leq0$ and $$\mu_1=\mu_1(d_1,d_2,\alpha,\beta,\kappa,\chi)=\frac{\alpha|\chi|}{4}\sqrt{\frac{\kappa}{d_1d_2\beta}} \ \  \text{ for  } \kappa>0
  $$
  for the logistic damping rate  so that  convergence of bounded solutions is ensured and the  respective convergence rates are explicitly calculated out whenever $\mu>\mu_1$. Recent convergence results of He-Zheng \cite{HZ16}  are hence completed and refined.
 Furthermore, both $\mu_0(n, d_1,d_2,\chi, \alpha)$ and $\mu_1(d_1,d_2,\alpha,\beta,\kappa,\chi)$ tend to infinity   as $d_1\rightarrow 0$ or $d_2\rightarrow 0$ and they are decreasing in $d_1$ and $d_2$; therefore,  small diffusion, especially,  degenerate or nonlinear diffusion, enhances the possibility of the occurrence of "unbounded" solutions. This gives a clue on how to produce blow-up  solutions for Keller-Segel chemotaxis models with logistic source.
\end{abstract}

\maketitle

\section{Introduction and outline of the main results}

  Chemotaxis,  the tendency of  cells, bacteria and similarly tiny organisms  to orient  the direction of movement (otherwise random) toward increasing or decreasing  concentration of a signaling substance, has been attracting great attention  in biological and mathematical community.  A celebrated mathematical model, initially proposed by  Keller-Segel \cite{Ke, Ke1},  makes up of two parabolic equations reflecting chemotactic movement through a nonlinear advective-diffusive term as its most defining characteristic. Their pioneering works  have initiated vast
investigations  of  the K-S model  and its various forms of variants since 1970.  We  refer to  the beautiful survey papers  \cite{Ho1,Hi, Win13, BBTW15}, where a broad survey on the progress of various chemotaxis models  and  rich selection of references can be found.

If biological processes in which chemotaxis plays a role are modeled not only on small timescales,  often the spontaneous growth  of the population, whose density we will denote by $u$,  should be incorporated.  A prototypical choice to achieve this is the logistic type source $\kappa u-\mu u^2$ with birth and death rates $\kappa $ and $\mu$, respectively. Let us  then begin with the most simplest perhaps the most  interesting so-called minimal-chemotaxis-growth model
\begin{equation}\label{minimal-model}\left\{ \begin{array}{lll}
&u_t = \nabla \cdot ( \nabla u-\chi u \nabla v)+\kappa u-\mu u^2,  &\quad x\in \Omega, t>0, \\[0.2cm]
&\tau v_t = \Delta  v  - v+u,  &\quad x\in \Omega, t>0, \\[0.2cm]
&\frac{\partial u}{\partial \nu}=\frac{\partial v}{\partial \nu}=0, &\quad x\in \partial \Omega, t>0,\\[0.2cm]
&u(x,0)=u_0(x)\geq 0, v(x,0)=v_0(x)\geq 0, &\quad x\in \Omega,  \end{array}\right.  \end{equation}
where  $\Omega\subset \mathbb{R}^n, n\geq 1$ is a smooth bounded domain, $v$ denotes the concentration of the chemical signal,  $\kappa\geq 0, \mu\geq 0, \tau\geq 0$ and $\chi\in\mathbb{R}$. The nonlinear term  $\chi u\nabla v $,  the defining term in chemotaxis models, is called chemotactic term:   in the case $\chi>0$, it  models the cells movement toward the higher concentrations of the chemical signal, which is called (positive) chemotaxis, in the case $\chi<0$,   it models the cells movement away from  the higher concentrations of the chemical signal, which is called negative chemotaxis.

Model  \eqref{minimal-model} with $\kappa=0, \mu=0$  corresponds to the classical Keller-Segel minimal model \cite{Ke, Ke1}, which and whose variants have been extensively explored since 1970.  The striking  feature of KS type models is the possibility of blow-up of solutions in a finite/infinite time (see, e.g., \cite{Ho1, Nan, Win13} ), which strongly depends on the space dimension. A finite/infinite  time blow-up never occurs in $1$-dimension  \cite{OY01, HP04, Xiang14} (except in some extreme nonlinear denerate diffusion model \cite{CL10}),  a  critical  mass blow-up occurs in $2$-dimension: when the initial mass lies below the threshold solutions exist globally, while above the threshold solutions blow up in finite time \cite{NSY97, HW01, SS01}, and generic blow-up in $\geq 3$-D \cite{Win100, Win13}. The knowledge about the classical KS type models appears to be rather complete, see the aforementioned surveys for more.

The blow up  solution or a  $\delta$ function is surely connected to the
phenomenon of cell aggregation; on the other hand,  various mechanisms proposed to the model have manifested that  the blow-up solutions is fully precluded while  pattern formation arises \cite{CKWW12, Hi, MW-15-Non, Xiang15JDE}. Among those mechanisms (see the introduction in \cite{Xiang15JDE}),  inclusion of  a growth source of cells is a common choice. In particular, the presence of logistic source has been shown to have an effect  of preventing ultimate growth  of populations. Indeed, in the case $n=1,2$,  even arbitrarily  small $\mu>0$ will be sufficient to suppress blow-up by ensuring all solutions to  \eqref{minimal-model} are global-in-time and uniformly bounded for all reasonably initial data  \cite{OY01, OTYM02, HP04, Xiangpre}. This is even  true for a two-dimensional parabolic-elliptic chemotaxis system with singular sensitivity \cite{FW14}.  Whereas, in the case $n\geq 3$,  the first  boundedness and global existence were  obtained  for a parabolic-elliptic simplification of (\ref{minimal-model}), i.e.,  $\tau=0$,  under the condition that  $\mu>\frac{(n-2)}{n}\chi$ \cite{TW07}. Recently, this result was  improved to the borderline case $\mu\geq \frac{n-2}{n}\chi$ \cite{WXpre}.  See also the  existence of very weak solutions under more general conditions \cite{Win08}. For the full parabolic-parabolic minimal chemotaxis-growth mode,  fundamental findings  were  obtained  by Winkler \cite{Win10}. Under the additional assumptions that $\Omega$ is convex and  $\mu$ is beyond  a certain number $\mu_0$ not explicitly known (except the case $\tau=1$ and $\chi>0$, where $\mu>\frac{n}{4}\chi$ is sufficient to prevent blow-ups), he proved the existence and uniqueness of global, smooth, bounded solutions to \eqref{minimal-model}.  Recently, a progress on global boundedness to \eqref{minimal-model} with $\chi>0$  was derived as long as $\mu>\theta_0 \chi$ for some implicit positive constant  $\theta_0$ depending on Sobolev embedding constants \cite{YCJZ15}. In 2015, an explicit lower bound for a 3-D chemotaxis-fluid system with logistic source was obtained by Tao and Winkler \cite{TWZAMP}, when applied to the chemotaxis system \eqref{minimal-model} with $\chi=\tau=1$, their result states that $\mu\geq 23$ is enough to prevent blow-ups. This bound was further improved by  Mu and Lin  \cite{ML16} (2016) in three dimensions, wherein they replaced the logistic source in \eqref{minimal-model} by the damping term $u-\mu u^r$ with $r\geq 2$ to derive the boundedness  under
\be\label{ml-con}
\mu^{\frac{1}{r-1}} >20\chi.
\ee
Their arguments were done to the case that $\Omega$ is convex by remarking that they could be adapted to non-convex domains by virtue of the papers \cite{MS14,ISY14}. Of course, when $r>2$, this result was already implied by \cite{Win10,YCJZ15}. Moreover, for the particular choices  $\kappa=\tau=1$, under certain largeness condition on the ration  $\mu/\chi$, the stabilization of bounded solution $(u,v)$ of \eqref{minimal-model} to the constant equilibrium $(1/\mu, 1/\mu)$ as $t\rightarrow \infty$ occurs \cite{Win14, HZ16, ML16}. While,  for arbitrarily small $\mu>0$,  only existence of global weak solutions to \eqref{minimal-model} is available \cite{La15-JDE} in convex 3-D domains.  Other  dynamical properties of \eqref{minimal-model} can be found in \cite{FLP07, HP11, Win14-JNS,  Cao15, La15-DCDS, TW15-JDE}.  Finally,  we observe that enormous  variants of (\ref{minimal-model}) have been considered  to provide conditions on diffusion, degradation, chemo-sensitivity and mostly on the growth source ensuring the boundedness of the proposed models \cite{ TW12, BH13, NO13,  Cao14, LX14, WLM14, WMZ14,Xiangpre, ZL15-ZAMP, Zh15-JMAA} and the references therein, and  that explosion of solutions  is possible  in chemotaxis systems  despite logistic growth restriction \cite{Win11, ZMH-15}. Therefore, it is meaningful to detect more circumstances  where no blow-up is allowed for the minimal KS model with logistic growth  \eqref{minimal-model}.

It is widely known that the KS minimal model \eqref{minimal-model} with $\kappa=\mu=0$  admits both bounded and unbounded solutions, identified via the critical chemotactic sensitivity  $u^{\frac{2}{n}}$ \cite{HW05, MA13, CS15-JDE}. Therefore, the model \eqref{minimal-model}  is simply a supercritical case with the balance of logistic damping and aggregation effects, for which the property of solutions should be not only qualitatively but also quantitatively determined by the parameters involved.  Motivated by the works \cite{Win10, YCJZ15, TWZAMP, ML16}, we attempt to provide a quantitative description of the competition between  logistic damping and other ingredients, especially,  chemotactic aggregation, and, in particular, we aim to find a full picture on how the lower bound $\mu_0$ of the logistic damping rate $\mu$ is affected by all the involving parameters so that no blow-up is allowed for $\mu>\mu_0$. Therefore, in this paper,  we will consider a full parameter K-S minimal system with a growth source covering the standard logistics source as follows:
\begin{equation}\label{para-para}\left\{ \begin{array}{lll}
&u_t = \nabla \cdot (d_1 \nabla u-\chi u \nabla v)+f(u),  &\quad x\in \Omega, t>0, \\[0.2cm]
&v_t = d_2\Delta  v  -\beta v+\alpha u,  &\quad x\in \Omega, t>0, \\[0.2cm]
&\frac{\partial u}{\partial \nu}=\frac{\partial v}{\partial \nu}=0, &\quad x\in \partial \Omega, t>0,\\[0.2cm]
&u(x,0)=u_0(x)\geq 0, v(x,0)=v_0(x)\geq 0, &\quad x\in \Omega,  \end{array}\right.  \end{equation}
where   $\Omega\subset \mathbb{R}^n, n\geq 1$ is a smooth bounded domain but not necessarily convex,  $d_1,d_2,\alpha,\beta>0$, $\chi\in\mathbb{R}$ and $f:\mathbb{R}\rightarrow \mathbb{R}$ is smooth and satisfies $f(0) \geq 0$ as well as
 \begin{equation}\label{log-con}
f(s)\leq a-\mu s^2, \quad \forall s\geq 0
\end{equation}
for some $a\geq 0$ and $\mu>0$.

With the aid of  the boundedness criteria on how the growth source affects the boundedness for a general class of chemotaxis-growth systems than \eqref{para-para} \cite{BBTW15, Xiangpre}, we provide a detailed algorithm to derive  an explicit  formula  for the lower bound $\mu_0$ of the logistic  damping rate $\mu$ such  that the system \eqref{para-para} admits only globally bounded solutions whenever $\mu>\mu_0$.  For nonconvex domains, our procedure is mainly carried out in physically relevant  settings ($n=3,4, 5$), where we have a clean and compact formula for $\mu_0$. Precisely, our main quantitative findings in this regard read as follows:
 \begin{theorem}\label{main thm}[How strong  a logistic damping can  prevent blow-up for  \eqref{para-para}]  Let $\Omega\subset \mathbb{R}^ n(n\geq 3)$ be a bounded smooth domain,  $f$ satisfy  the logistic condition \eqref{log-con} and $d_1,d_2,\alpha,\beta>0$, $a\geq 0$ and $\chi\in\mathbb{R}$.
\
\
\
\begin{itemize}
\item[(i)]  For $n=3$,  let the lower logistic damping rate  $\mu_0=\mu_0(3, d_1,d_2,\alpha,\chi)$ be explicitly given by
\begin{equation}\label{mu0-con-3d}
\mu_0= \left\{ \begin{array}{ll}
&\frac{3}{4d_1}\alpha\chi, \quad \quad \quad \text{if } d_1=d_2, \chi>0  \text{   and  } \Omega \text{  is convex},   \\[0.25cm]
&\frac{3}{\sqrt{10}-2}(\frac{1}{d_1}+\frac{2}{d_2})\alpha |\chi|,  \text{  otherwise};  \end{array}\right.
\end{equation}
\item[(ii)] for $n=4, 5$, let the lower logistic damping rate $\mu_0=\mu_0(n,d_1,d_2,\alpha,\chi)$ be  explicitly given by
\begin{equation}\label{mu0-con-4d}
\mu_0= \left\{ \begin{array}{ll}
&\frac{n}{4d_1}\alpha\chi, \quad \quad \quad \text{if } d_1=d_2, \chi>0  \text{   and  } \Omega \text{  is convex },   \\[0.25cm]
&\max\Bigr\{\frac{1}{3}h(n,d_1,d_2), \frac{n}{\sqrt{2n+4}-2}(\frac{1}{d_1}+\frac{2}{d_2})\Bigr\}\alpha |\chi|,  \text{  otherwise } \end{array}\right.
\end{equation}
with
$$
\ba{ll}
h(n,d_1,d_2)=\min_{0<\epsilon<d_1, 0<\eta<d_2}&\Big\{\sqrt{\frac{n}{18d_2\epsilon}}+\sqrt{\frac{1}{2\epsilon}(\frac{1}{\eta}+\frac{n}{2d_2})}\\[0.25cm]
&+\sqrt{\frac{1}{(d_2-\eta)}(\frac{2}{\eta}+\frac{n}{2d_2})}\Bigr[\sqrt{2}+\frac{(d_1+d_2)}{2\sqrt{(d_1-\epsilon)(d_2-\eta)}}\Bigr]\Bigr\}.
\ea
$$
\end{itemize}
Then, whenever $\mu>\mu_0$,  the chemotaxis-growth system \eqref{para-para} has a unique global solution $(u,v)$ for which both $u$ and $v$ are nonnegative and bounded in $\Omega\times (0,\infty)$.
\end{theorem}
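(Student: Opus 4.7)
The approach is the classical three-step scheme for \eqref{para-para}. Step one is local existence and uniqueness of a maximal classical solution on $[0,T_{\max})$ together with the standard extensibility criterion that $T_{\max}<\infty$ forces $\|u(\cdot,t)\|_{L^\infty(\Omega)}$ to blow up. Step two invokes the quantitative boundedness criterion for chemotaxis--growth systems (available in the spirit of \cite{BBTW15,Xiangpre}), which reduces global boundedness to a uniform a priori bound $\sup_{t<T_{\max}}\|u(\cdot,t)\|_{L^p(\Omega)}<\infty$ for some $p>n/2$; the logistic dissipation then drives a Moser / semigroup bootstrap to $L^\infty$. Step three is to establish this $L^p$ bound, and this is where every quantitative ingredient of \eqref{mu0-con-3d}--\eqref{mu0-con-4d} is produced.

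For step three I would test the first equation with $u^{p-1}$ and integrate to obtain
$$\frac{1}{p}\frac{d}{dt}\int_{\Omega}u^{p}+\frac{4d_{1}(p-1)}{p^{2}}\int_{\Omega}|\nabla u^{p/2}|^{2}=-\frac{\chi(p-1)}{p}\int_{\Omega}u^{p}\Delta v+\int_{\Omega}u^{p-1}f(u),$$
and then substitute $\Delta v=d_{2}^{-1}(v_{t}+\beta v-\alpha u)$ from the second equation, which converts the chemotactic right-hand side into
$$\frac{\chi(p-1)\alpha}{pd_{2}}\int_{\Omega}u^{p+1}-\frac{\chi(p-1)}{pd_{2}}\int_{\Omega}u^{p}v_{t}-\frac{\chi(p-1)\beta}{pd_{2}}\int_{\Omega}u^{p}v.$$
This puts the aggregation-driven $\int_\Omega u^{p+1}$ into direct competition with the logistic dissipation $-\mu\int_\Omega u^{p+1}$ produced by $f(s)\leq a-\mu s^{2}$, while the awkward $\int_\Omega u^{p}v_{t}$ is absorbed by augmenting the energy functional with a piece of the form $c\,\|\nabla v\|_{L^{2q}(\Omega)}^{2q}$ and differentiating in time using $v_{t}=d_{2}\Delta v-\beta v+\alpha u$, which generates compensating $|\nabla v|$-type structure.

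The two branches of the theorem then reflect two ways of closing this inequality. In the convex, $d_{1}=d_{2}$, $\chi>0$ subcase, the symmetric structure lets a Winkler-style Lyapunov functional (equivalently, the substitution $u-(\chi/d_{1})v$) cancel $\int_\Omega u^{p}v_{t}$ exactly, while convexity of $\Omega$ discards the boundary contribution $\int_{\partial\Omega}\partial_{\nu}|\nabla v|^{2}$ with the correct sign, leading to the clean coefficient $n/(4d_{1})$. In the general case, with neither symmetry nor favourable boundary signs, the boundary term is instead controlled by the trace estimates used in \cite{MS14,ISY14}, and the remaining cross terms are split by Young's inequality with one free parameter $\epsilon\in(0,d_{1})$ for $n=3$ and with two parameters $(\epsilon,\eta)\in(0,d_{1})\times(0,d_{2})$ for $n=4,5$; the two competing constants in \eqref{mu0-con-4d} correspond to two different but legitimate choices of the coupling scheme, whence the $\max$.

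The main obstacle is bookkeeping rather than conceptual: the explicit thresholds $\frac{n}{\sqrt{2n+4}-2}\bigl(\frac{1}{d_{1}}+\frac{2}{d_{2}}\bigr)\alpha|\chi|$ and $h(n,d_{1},d_{2})$ in \eqref{mu0-con-3d}--\eqref{mu0-con-4d} emerge only after the multiplicative constants from every Gagliardo--Nirenberg, Young and integration-by-parts step are tracked exactly and then minimised over $\epsilon$, $\eta$, the Young-exponent that interpolates $\int u^{p}|\nabla v|^{2}$ between $\int u^{p+1}$ and $\int|\nabla v|^{2q+2}$, and finally over $p$ itself. Once this $L^p$ bound is secured, $v$-semigroup estimates yield $\nabla v\in L^{\infty}$ and a routine Moser iteration promotes the estimate to $\|u\|_{L^{\infty}}<\infty$, contradicting the extensibility criterion and establishing the claimed global boundedness of $(u,v)$.
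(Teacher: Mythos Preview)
Your overall three-step scheme is right, and so is the idea that boundary terms in the non-convex case are handled by the pointwise inequality $\partial_\nu|\nabla v|^2\le K|\nabla v|^2$ from \cite{MS14} together with a trace inequality. But the core of step three is off in two places.

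First, for the general (non-convex) branch, the substitution $\Delta v=d_2^{-1}(v_t+\beta v-\alpha u)$ is the parabolic--\emph{elliptic} trick; in the fully parabolic system it leaves you with $\int_\Omega u^p v_t$, which you cannot absorb by adding $\|\nabla v\|_{L^{2q}}^{2q}$ as you suggest: differentiating $\int_\Omega|\nabla v|^{2q}$ produces $\int_\Omega|\nabla v|^{2(q-1)}\nabla v\cdot\nabla v_t$, not $\int_\Omega u^p v_t$, and rewriting $v_t$ via the $v$-equation simply undoes the substitution. The paper never performs this substitution. It keeps the chemotactic term as $\chi(p-1)\int_\Omega u^{p-1}\nabla u\cdot\nabla v$, bounds it by Young's inequality to produce $\int_\Omega u^p|\nabla v|^2$, and then controls the latter by building the coupled functional
\[
z(t)=\sum_{k=0}^{r}\delta_k\int_\Omega u^k\,|\nabla v|^{2(r-k)},
\]
with $r=2$ for $n=3$ and $r=3$ for $n=4,5$. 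Each mixed piece $\int_\Omega u^k|\nabla v|^{2(r-k)}$ has its own differential inequality (the paper's Lemmas~2.4--2.6), and the weights $\delta_k$ together with Young parameters $\epsilon\in(0,d_1)$, $\eta\in(0,d_2)$, $\epsilon_i>0$ are chosen so that all cross terms close up; the explicit thresholds in \eqref{mu0-con-3d}--\eqref{mu0-con-4d} come from minimising over these parameters. No Gagliardo--Nirenberg constants enter the thresholds, and $p$ is fixed (namely $p=2$ for $n=3$, $p=3$ for $n=4,5$), not optimised over.

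Second, the convex, $d_1=d_2$, $\chi>0$ subcase is not an $L^p$ argument at all, and the substitution $u-(\chi/d_1)v$ plays no role. The paper applies the parabolic maximum principle to the \emph{pointwise} quantity $w=2\alpha u+\chi|\nabla v|^2$: using $d_1=d_2$ and $|\Delta v|^2\le n|D^2v|^2$ one derives $w_t-d_1\Delta w+2\beta w\le M$ for an explicit constant $M$ precisely when $\mu>\frac{n}{4d_1}\alpha\chi$; convexity of $\Omega$ gives $\partial_\nu w\le 0$ on $\partial\Omega$, and the Hopf lemma then bounds $w$, hence $u$ and $|\nabla v|$, in $L^\infty$ directly.
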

\begin{remark}\label{note on main thm} [Notes on  how strong a logistic damping can prevent blow-up]
\
\
\
\begin{itemize}
\item[(P1)] The explicit logistic damping rate $\mu_0$ given in \eqref{mu0-con-3d} or  \eqref{mu0-con-4d} exhibits the contributions of  the degradation, creation   and diffusion rates, etc in respective of boundedness of solutions of \eqref{para-para}. That is, it  shows how  strong  a logistic damping is needed to prevent blow-ups for  \eqref{para-para}.
\item [(P2)] The formula for $\mu_0$ is not  only explicitly expressible but also is independent of the degradation rate $\beta$ of signals,   the birth rate $a$ of cells,   the size of  domain $\Omega$,  initial data $u_0,v_0$ and Sobolev embedding constants. This gives a quantized effect of the logistic source on preventing blow-ups, and hence improves the boundedness principles  \cite{Win10, Xiangpre} and the qualitative result \cite{YCJZ15} .
\item[(P3)] The form of $\mu_0(n, d_1,d_2, \alpha, \chi)$ captures our common understanding, since either  $\chi=0$ (no chemotaxis, cf. \cite[Proposition 2.6] {Xiangpre})  or $\alpha=0$ (decoupled),  the boundedness and global existence are easily seen for any $\mu>0$.

\item[(P4)] The  chemo-repulsion case, i.e., $\chi<0$ is allowed as well in $\mu_0$.
\item [(P5)] $\mu_0(n, d_1,d_2, \alpha, \chi)\rightarrow \infty$ as $d_1\rightarrow 0$ or $d_2\rightarrow 0$ (and it  is decreasing in $d_1$ and $d_2$); thus small diffusion, especially, degenerate or nonlinear diffusion, enhances the possibility of the occurrence of blow-ups.

\item[(P6)]  In the case  that  $\Omega$ is nonconvex, we have
$$
\mu_0^{(\text{nonconvex})}(3, 1,1,1,\chi)=\frac{9}{\sqrt{10}-2}\chi=(7.743416\cdots)\chi.
$$
Hence,  the very recent  boundedness result,  $\mu>20\chi$ obtained from \eqref{ml-con} with $r=2$, of Mu and Lin \cite{ML16}, and the byproduct boundedness,  $\mu\geq \mu_0(3, 1,1,1, 1)=23$ of  Tao and Winkler \cite{TWZAMP}, as  their studied 3-D fluid system coupled with the minimal chemotaxis system \eqref{minimal-model}, are greatly improved.
\end{itemize}
\end{remark}

For mathematical completeness, one may wonder such an explicit formula is also available in $\geq 6$ dimensions. Indeed, our algorithm suggests that an explicit formula for $\mu_0$ in $n$-D ($n\geq  6$) of the form $\theta_0(n,d_1,d_2)\alpha |\chi|$,  which is not clean but enjoys the first 5 properties (P1)-(P5), would be  also available:
\begin{itemize}
\item[(Q1)] For general $n\geq 6$,  no blow-ups can occur to the minimal-chemotaxis-growth model \eqref{para-para} if $\mu> \mu_0(n, d_1,d_2,\alpha,\chi)$, where
\begin{equation}\label{mu0-con-general}
 \mu_0(n, d_1,d_2,\alpha,\chi)= \left\{ \begin{array}{ll}
&\frac{n}{4d_1}\alpha\chi, \quad \quad \quad \text{if } d_1=d_2, \chi>0  \text{   and  } \Omega \text{  is convex},   \\[0.25cm]
&\theta_0(n,d_1,d_2)\alpha |\chi|,  \text{  otherwise } \end{array}\right.
\end{equation}
with some  explicit  (perhaps messy) formula  $\theta_0$ depending only on $n,d_1$ and $d_2$ with the property that $\theta_0\rightarrow \infty$ as either $d_1\rightarrow 0$ or $d_2\rightarrow 0$.
\end{itemize}
Indeed, the first case of \eqref{mu0-con-general} has been shown in \cite{Win10}  for the KS model \eqref{minimal-model} with $\tau=1$,  see  Lemma  \ref{main-thm-nd-convex}  for the full-parameter model \eqref{para-para}. Based on \cite{YCJZ15} and carefully  and painfully scrutinizing our procedure, it is quite possible to trace out the formula $\theta_0$ in \eqref{mu0-con-general}. Here, we  leave the rigorous justification for future investigations.

In nonconvex domains, the formula $\mu_0(3, d_1,d_2,\alpha,\chi)$ is the smallest damping rate that we could obtain using this procedure. While, in convex domains, we have  $\mu_0^{(\text{convex})}(3, 1,1,1,1)=0.75$. A comparison to  $\mu_0^{(\text{nonconvex})}(3, 1,1,1,1)=7.743416\cdots$ as computed in (P3),  indicates that convexity makes a big difference and that  the formula $\mu_0(n, d_1,d_2,\alpha,\chi)$  may not be optimal even through it meets the expected properties described in (P2) and (P3), on the other hand.

This discussions lead us to ask other challenging questions left for the minimal-chemotaxis-growth model \eqref{para-para}:
\begin{itemize}
\item[(Q2)]  Is there a critical damping rate $\mu_0^c$ that distinguishes between occurrence and impossibility of blow-up for \eqref{para-para}. That is, when the logistic  source satisfies $f(u) \leq a-\mu u^2$ with $\mu<\mu_0^c$, blow-up occurs; when the source satisfies $f(u) \leq a-\mu u^2$ with $\mu>\mu_0^c$, blow-up is impossible.
\item[(Q3)] What happens for small logistic damping $\mu<\mu_0$, boundedness or blow-up?
\end{itemize}

Questions akin to (Q2) and (Q3)  may have been indicated by existing literature \cite{Win10, Xiangpre}.  To explore them,  a combination of  the references \cite{TW07, Win11, STW14, Win14-JNS, Win14,  La15-DCDS, TW15-JDE, TW15-pre, WXpre} may be of some help. A complete  (quantitative or qualitative) description of the competition between chemotactic aggregation and logistic damping is definitely worthwhile for future explorations.

  When logistic damping wins over chemotactic aggregation, i.e.,  $\mu>\mu_0$ or equivalently $|\chi|<\frac{\mu}{\alpha \theta_0(n, d_1,d_2)}$, we wish to see  the explicit effects  of  each term in \eqref{para-para} on the long time dynamical properties of bounded solutions. To this end, we study the large time behavior for the minimal chemotaxis  model with a standard logistic source:
\begin{equation}\label{min-std}\left\{ \begin{array}{lll}
&u_t = \nabla \cdot (d_1 \nabla u-\chi u \nabla v)+\kappa u-\mu u^2,  &\quad x\in \Omega, t>0, \\[0.2cm]
&v_t = d_2\Delta  v  -\beta v+\alpha u,  &\quad x\in \Omega, t>0, \\[0.2cm]
&\frac{\partial u}{\partial \nu}=\frac{\partial v}{\partial \nu}=0, &\quad x\in \partial \Omega, t>0,\\[0.2cm]
&u(x,0)=u_0(x)\geq 0, v(x,0)=v_0(x)\geq 0, &\quad x\in \Omega.   \end{array}\right.  \end{equation}
 For $d_1=d_2=\alpha=\beta=\kappa=1$, the uniform convergence of bounded solution to $(1/\mu, 1/\mu)$ is first proved in \cite[Theorem 1.1]{Win14} for  $\mu/\chi>0$ sufficiently large and then in  \cite[Theorem 1.3]{ML16} for $\mu>20\chi$;  for $d_1=d_2=\alpha=\beta=1$, He and Zheng \cite{HZ16} modified the energy functional method from \cite{BW16,Hsu78} to obtain the stabilities  of the constant equilibria $(0,0)$ and $(\kappa/\mu, \kappa/\mu)$ with  convergence estimates \cite[Theorem 3]{HZ16}. Next, for completeness and to see the role of other parameters in the large time behavior of the solutions, we combine the energy functional method from \cite{BW16,HZ16, Hsu78} to show the stability of the constant equilibria $(0,0)$ and $(\kappa /\mu, \alpha \kappa /(\beta\mu))$ for the full-parameter KS model \eqref{min-std}. Our precise results on the large time limit of bounded solutions of \eqref{min-std} are collected in the following theorem.
 \begin{theorem}\label{large time} Let $\Omega\subset \mathbb{R}^ n(n\geq 3)$ be a bounded smooth domain, $u_0\geq, \not\equiv 0$,  $d_1,d_2,\alpha,\beta, \mu>0$, $\kappa, \chi\in \mathbb{R}$ and, finally,  let $\mu>\mu_0$ as obtained in Theorem \ref{main thm} so that $(u,v)$ be a global and bounded solution of \eqref{min-std}.
\begin{itemize}
\item[(i)] When $\kappa>0$, assume additionally that
\be\label{mu-con} \mu>\mu_1(d_1,d_2,\alpha,\beta,\kappa,\chi)=\frac{\alpha|\chi|}{4}\sqrt{\frac{\kappa}{d_1d_2\beta}}.  \ee
Then the  solution $(u,v)$ of \eqref{min-std} converges exponentially:
\be\label{u+v-conv-i}
\|u(\cdot, t)-\frac{\kappa}{\mu}\|_{L^\infty(\Omega)}+\|v(\cdot, t)-\frac{\alpha\kappa}{\beta\mu}\ \|_{L^\infty(\Omega)}\leq C
 e^{-\gamma t}
 \ee
for all $t\geq 0$ and some large constant $C$ independent of $t$ and
$$
\gamma=\frac{\min\Bigr\{(\mu-\frac{\alpha \kappa\chi^2}{4d_1d_2\mu}\epsilon_0),\frac{\kappa\chi^2}{4d_1d_2\mu}(\beta-\frac{\alpha}{4\epsilon_0})\Bigr\}}
{(n+2)\max\{\frac{\mu}{\kappa},\frac{\kappa\chi^2}{8d_1d_2\mu}\}}, \quad \epsilon_0=\frac{1}{2}(\frac{\alpha}{4\beta}+\frac{4d_1d_2\mu^2}{\alpha\kappa \chi^2}).
$$
\item[(ii)] When $\kappa=0$, the  solution $(u,v)$ of \eqref{min-std} converges algebraically:
    \be\label{u+v-conv-ii}
\|u(\cdot,t)\|_{L^\infty(\Omega)}+\|v(\cdot,t)\|_{L^\infty(\Omega)}
 \leq C (t+1)^{-\frac{1}{n+1}}
 \ee
for all $t\geq 0$ and some large constant $C$ independent of $t$.
\item[(iii)] When  $\kappa<0$, the solution $(u,v)$ of \eqref{min-std} converges exponentially:
\be\label{u+v-conv-iii}
\|u(\cdot,t)\|_{L^\infty(\Omega)}
\leq C e^{\frac{\kappa}{n+1}t },\quad
\|v(\cdot,t)\|_{L^\infty(\Omega)}\leq C e^{-\frac{1}{2(n+1)}\min\{\beta, -\kappa\}t}
\ee
for all $t\geq 0$ and some large constant $C$ independent of $t$.
\end{itemize}
 \end{theorem}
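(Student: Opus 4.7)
My plan splits according to the sign of $\kappa$. In each case I construct a Lyapunov-type functional whose time derivative is dominated by a negative multiple of a norm of the deviation from the target equilibrium, and then transfer the resulting $L^2$ or $L^1$ decay to $L^\infty$ decay by Gagliardo-Nirenberg interpolation against the uniform $W^{1,\infty}$ bound that follows from Theorem \ref{main thm} together with standard parabolic Schauder regularity.

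\textbf{Case (i), $\kappa>0$.} Following the entropy-energy approach of Bai-Winkler and He-Zheng adapted to the full-parameter system, I set $u_\ast=\kappa/\mu$, $v_\ast=\alpha\kappa/(\beta\mu)$ (so $\kappa=\mu u_\ast$ and $\alpha u_\ast=\beta v_\ast$) and introduce
\[
E(t)=\int_\Omega\!\Big(u-u_\ast-u_\ast\ln\tfrac{u}{u_\ast}\Big)+\frac{A}{2}\int_\Omega (v-v_\ast)^2,\qquad A=\frac{\chi^2\kappa}{4d_1d_2\mu}.
\]
Differentiating and integrating by parts, a Young inequality on the chemotaxis cross term $\chi u_\ast\int\nabla u\cdot\nabla v/u$ absorbs the dissipation $d_1u_\ast\int|\nabla u|^2/u^2$ and produces $\tfrac{\chi^2 u_\ast}{4d_1}\int|\nabla v|^2$; the specific value of $A$ above is chosen so this is killed by the $-Ad_2\int|\nabla v|^2$ coming from the $v$-dissipation. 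A second Young inequality with parameter $\epsilon_0$ on $A\alpha\int(v-v_\ast)(u-u_\ast)$ then gives $E'(t)\leq -C_1\int(u-u_\ast)^2-C_2\int(v-v_\ast)^2$ with $C_1=\mu-\alpha\kappa\chi^2\epsilon_0/(4d_1d_2\mu)$ and $C_2=\tfrac{\kappa\chi^2}{4d_1d_2\mu}(\beta-\alpha/(4\epsilon_0))$. Both coefficients are strictly positive precisely when $\alpha/(4\beta)<\epsilon_0<4d_1d_2\mu^2/(\alpha\kappa\chi^2)$, an interval non-empty exactly when $\mu>\mu_1$, and the $\epsilon_0$ of \eqref{u+v-conv-i} is its midpoint. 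Taylor expansion of $g(x)=x-1-\ln x$ near $x=1$, together with a positive lower bound for $u$ (eventually uniform in $t$, obtained by combining $L^2$ smallness with the uniform $C^{1,\alpha}$ bound), yields $E(t)\asymp\|u-u_\ast\|_{L^2}^2+\|v-v_\ast\|_{L^2}^2$ with comparability constants $\mu/\kappa$ and $A/2$. Hence $E'\leq -c_E E$ with $c_E=\min\{C_1,C_2\}/\max\{\mu/\kappa,A/2\}$, and the Gagliardo-Nirenberg inequality $\|w\|_{L^\infty}\leq C\|w\|_{L^2}^{2/(n+2)}\|w\|_{W^{1,\infty}}^{n/(n+2)}$ upgrades the exponential $L^2$ decay to the exponential $L^\infty$ rate $\gamma=c_E/(n+2)$ claimed in \eqref{u+v-conv-i}.

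\textbf{Cases (ii) and (iii).} Since $u$ no longer has a positive equilibrium, I drop the log-entropy and work directly with mass. Integrating the $u$-equation gives $\tfrac{d}{dt}\int_\Omega u=\kappa\int u-\mu\int u^2$. When $\kappa=0$, Cauchy-Schwarz yields $\tfrac{d}{dt}\int u\leq -(\mu/|\Omega|)(\int u)^2$, whose explicit ODE solution is $\|u\|_{L^1}\leq C(t+1)^{-1}$; when $\kappa<0$, simply dropping the negative quadratic term gives $\|u\|_{L^1}\leq e^{\kappa t}\|u_0\|_{L^1}$. The Gagliardo-Nirenberg inequality $\|u\|_{L^\infty}\leq C\|u\|_{L^1}^{1/(n+1)}\|u\|_{W^{1,\infty}}^{n/(n+1)}$ and the uniform $W^{1,\infty}$ bound then convert these into the $L^\infty$ decays $(t+1)^{-1/(n+1)}$ and $e^{\kappa t/(n+1)}$ of \eqref{u+v-conv-ii} and \eqref{u+v-conv-iii}. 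For $v$, integrating the second equation of \eqref{min-std} yields $\tfrac{d}{dt}\int v+\beta\int v\leq\alpha\|u\|_{L^1}$; Duhamel with the just-established decay of $\|u\|_{L^1}$ produces algebraic decay of $\|v\|_{L^1}$ in case (ii) and exponential decay at rate $\min\{\beta,-\kappa\}$ in case (iii), and one further Gagliardo-Nirenberg step delivers the stated $L^\infty$ rates for $v$.

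\textbf{Principal obstacle.} The delicate step is case (i), where the explicit form of $\gamma$ forces the two Young parameters, the one absorbing $\nabla u\cdot\nabla v$ and the $\epsilon_0$ handling $A\alpha(u-u_\ast)(v-v_\ast)$, to be chosen simultaneously so that the $|\nabla v|^2$ residue vanishes and both $L^2$ residues remain strictly negative; any looser split would push the threshold above $\mu_1$ and spoil the clean formula. The remaining technical nuisance is ensuring the positive lower bound on $u$ that makes the log-entropy well defined and comparable to $\|u-u_\ast\|_{L^2}^2$, but this follows for $t\geq T_0$ by bootstrapping the functional decay against the uniform parabolic regularity granted by Theorem \ref{main thm}.
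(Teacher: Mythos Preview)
Your proposal is correct and follows essentially the same route as the paper: the same log-entropy functional with the same weight $A/2=\delta=\kappa\chi^2/(8d_1d_2\mu)$, the same two Young splits producing exactly the coefficients $C_1,C_2$ appearing in $\gamma$, and the same Gagliardo--Nirenberg interpolation against parabolic Schauder bounds to upgrade $L^2$ (respectively $L^1$) decay to $L^\infty$; cases (ii) and (iii) are handled identically via mass integration and Duhamel. The only cosmetic difference is that the paper first extracts unrated $L^\infty$ convergence of $u$ to $\kappa/\mu$ (via time-integrability of $\int(u-u_\ast)^2$ plus uniform continuity) before invoking the quadratic comparability of the log term, whereas you phrase this as securing a positive lower bound on $u$; these are the same mechanism.
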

  \begin{remark}\label{note on lt thm} [Explicit effects  on convergence and  convergence rate]
\
\
\
\begin{itemize}
\item[(P7)] The formula  $\mu_1(d_1,d_2,\alpha,\beta,\kappa,\chi)$ exhibits the explicit effects of each gradient in \eqref{para-para} on convergence and it enjoys the property described in (P3). Besides, our convergence completes and refines \cite[Theorem 3 for $d_1=d_2=\alpha=\beta=1$ ]{HZ16}  by computing out the explicit rates of convergence.
\item [(P8)] From Remark \ref{note on main thm}, we find that $\kappa$ and $\beta$ do not play any role in boundedness, while, they play big roles in the long time behavior as seen in \eqref{mu-con}. These effects have not been detected yet in the literature. In particular, $\mu_1( d_1,d_2, \alpha, \beta, \kappa, \chi)$ is decreasing in $d_1$, $d_2$ and $\beta$, and $\mu_1( d_1,d_2, \alpha, \beta, \kappa, \chi)\rightarrow \infty$ as $d_1\rightarrow 0$ or $d_2\rightarrow 0$ or $\beta\rightarrow 0$. Therefore,  small diffusion, especially, degenerate or small degradation,  makes the stabilization harder.
    \end{itemize}
\end{remark}

    In chemotaxis-growth systems,  the most challenging and interesting wide-open question is to detect the possibility of finite/infinite-time physical blow-ups ($n=3$) (nonphysical blow-ups has been demonstrated in \cite{Win11, ZMH-15} for $n\geq 5$).   Remarks \ref{note on main thm}  and \ref{note on lt thm}, especially,  (P5) and (P8)  suggest certain clues on how to produce unbounded solutions \cite{BCL-09, Win10-2, WD10, CS12JDE, CS15-JDE, PN-16-Poin}. To attack such a challenging problem,  one may try the following chemotaxis  system and perhaps its simplified version:
\be \label{min-ks-bp}\left\{ \begin{array}{lll}
&u_t = \nabla \cdot (\epsilon_1(u) \nabla u-  u \nabla v)+  \kappa u-\mu u^2,  &\quad x\in \Omega, t>0, \\[0.2cm]
&\tau v_t = \epsilon_2\Delta  v  -\epsilon_3 v+ u,  &\quad x\in \Omega, t>0
   \end{array}\right.  \ee
with $\epsilon_i>0(i=2,3)$ being sufficiently small and either  $\epsilon_1(u)$ being a sufficiently small positive constant or $\epsilon_1(u)\rightarrow 0$ as $u\rightarrow \infty$ (very slow diffusion at point of high densities).
Indeed,  for $\tau=0, \epsilon_1(u)=0$, $\epsilon_2=\epsilon_3=1$ in \eqref{min-ks-bp},  verifications can be found in \cite{Win14-JNS, La15-DCDS}. We leave the challenging exploration of the possibility of  blow-ups to \eqref{min-ks-bp} (and hence (Q2) and (Q3)) for our future studies.

\section{Preliminaries and  subtle inequalities for \eqref{minimal-model}}

For convenience, we start with Young's inequality, which states, for any positive numbers $p$ and $q$ with $\frac{1}{p}+\frac{1}{q}=1$, that
$$
ab\leq \frac{a^p}{p}+\frac{b^q}{q}, \quad \quad \forall a,b\geq 0.
$$
This immediately implies the so-called Young's inequality with $\epsilon$:
\begin{lemma}\label{Y-epsilon}(Young' s inequality with $\epsilon$) Let $p$  and $q$ be two given positive numbers with $\frac{1}{p}+\frac{1}{q}=1$. Then, for any  $\epsilon>0$, it holds
$$
ab\leq \epsilon a^p+\frac{b^q}{(\epsilon p)^{\frac{q}{p}}q},  \quad \quad \forall a,b\geq 0.
$$\end{lemma}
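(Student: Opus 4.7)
The plan is to deduce this statement directly from the ordinary Young's inequality stated just before the lemma, by means of a rescaling trick that absorbs the parameter $\epsilon$ into the two factors. Specifically, I would fix any $\epsilon>0$ and any nonnegative $a,b$, and then set
\[
x=(\epsilon p)^{1/p}\,a,\qquad y=(\epsilon p)^{-1/p}\,b,
\]
so that the scaling factors cancel in the product, giving $xy=ab$. Applying the baseline Young's inequality $xy\leq x^p/p+y^q/q$ to this pair reduces the problem to computing what $x^p/p$ and $y^q/q$ become after the substitution.

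Carrying out the arithmetic, $x^p/p=\epsilon p\,a^p/p=\epsilon a^p$ produces exactly the first term on the right-hand side, and $y^q/q=(\epsilon p)^{-q/p}b^q/q$ produces exactly the second term. Stringing these equalities with the baseline inequality yields the claimed bound. No obstacle is expected: the conjugacy relation $1/p+1/q=1$ is used only implicitly, through its presence in the underlying Young inequality, and the exponent $-q/p$ on the denominator is simply the image of $-1/p$ under the $q$-th power. The entire argument is a one-line algebraic manipulation, so the lemma should be regarded as an immediate, parameter-dependent restatement of standard Young.
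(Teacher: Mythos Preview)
Your proposal is correct and mirrors exactly what the paper does: the paper simply remarks that the standard Young inequality ``immediately implies'' the $\epsilon$-version, and your rescaling $x=(\epsilon p)^{1/p}a$, $y=(\epsilon p)^{-1/p}b$ is precisely the one-line verification of that implication. There is nothing to add.
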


 The  local  solvability and extendibility   of  the parabolic-parabolic chemotaxis system (\ref{para-para}) is well-established by using a suitable fixed point argument and standard parabolic regularity theory; see, for example, \cite{HW05, Win10, TW12, LX14}.
\begin{lemma}\label{local-in-time}Let  $d_1,d_2,\alpha,\beta>0$, $a\geq 0$, $\chi\in \mathbb{R}$  and let  $\Omega\subset \mathbb{R}^n$ be a bounded domain with a smooth boundary. Suppose  that the initial data $(u_0,v_0)$ satisfies $u_0\in C(\overline{\Omega})$  and $v_0\in W^{1,p}(\Omega)$ with some $p>n$ and that $f\in W^{1,\infty}_{\mbox{loc}}(\mathbb{R})$ with $f(0)\geq 0$. Then there is a unique,  nonnegative,  classical maximal solution $(u,v)$ of the IBVP (\ref{para-para}) on some maximal interval $[0, T_{\mbox{max}})$ with $0<T_{\mbox{max}} \leq \infty$ such that
$$\ba{ll}
u\in C(\overline{\Omega}\times [0, T_{\mbox{max}}))\cap C^{2,1}(\overline{\Omega}\times (0, T_{\mbox{max}})), \\[0.2cm]
v\in C(\overline{\Omega}\times [0, T_{\mbox{max}}))\cap C^{2,1}(\overline{\Omega}\times (0, T_{\mbox{max}}))\cap L_{\text{loc}}^\infty([0, T_{\mbox{max}}); W^{1,p}(\Omega)).
\ea $$
In particular,  if $T_{\mbox{max}}<\infty$, then
\be\label{T_m-cri} \|u(\cdot, t)\|_{L^\infty(\Omega)}+\|v(\cdot, t)\|_{W^{1,p}(\Omega)}\rightarrow \infty \quad \quad \mbox{ as }  t\rightarrow T_{\mbox{max}}^{-}.
\ee
\end{lemma}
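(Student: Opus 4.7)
The plan is to use a contraction mapping argument based on the variation-of-constants formula, followed by parabolic bootstrapping. First I would fix $T\in(0,1]$ and $R>0$, and on the closed set
$$
S_{T,R}=\bigl\{\tilde u\in C(\overline{\Omega}\times[0,T]):\tilde u(\cdot,0)=u_0,\ \|\tilde u\|_{L^\infty(\Omega\times(0,T))}\le R\bigr\}
$$
define the map $\Phi:\tilde u\mapsto u$ in two sub-steps. Given $\tilde u$, first solve the (linear in $v$) second equation via the Neumann heat semigroup $(e^{t(d_2\Delta-\beta)})_{t\ge 0}$ to obtain
$$
v(t)=e^{t(d_2\Delta-\beta)}v_0+\alpha\int_0^t e^{(t-s)(d_2\Delta-\beta)}\tilde u(s)\,ds,
$$
which lies in $L^\infty([0,T];W^{1,p}(\Omega))$ by the standard $L^\infty\!\to\! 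W^{1,p}$ smoothing estimate (with integrable time-singularity, valid since $p>n$). Then freeze this $v$ and define $u=\Phi(\tilde u)$ by
$$
u(t)=e^{td_1\Delta}u_0-\chi\int_0^t e^{(t-s)d_1\Delta}\nabla\!\cdot\!\bigl(\tilde u(s)\nabla v(s)\bigr)\,ds+\int_0^t e^{(t-s)d_1\Delta}f(\tilde u(s))\,ds.
$$
Because $p>n$, the embedding $W^{1,p}(\Omega)\hookrightarrow L^\infty(\Omega)$ gives $\nabla v\in L^\infty$; combining this with the $\nabla\!\cdot\! e^{td_1\Delta}$-smoothing estimate (whose time-singularity is of order $t^{-1/2-n/(2p)}$, integrable near $0$) and the local Lipschitz property $f\in W^{1,\infty}_{\mbox{loc}}$, one verifies that $\Phi$ maps $S_{T,R}$ into itself and is a contraction in $L^\infty$, provided $R>\|u_0\|_{L^\infty}+1$ and $T=T(R,\|u_0\|_{L^\infty},\|v_0\|_{W^{1,p}})$ is chosen sufficiently small.

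Having obtained a mild solution, I would upgrade it to a classical one. From $u\in L^\infty$ and $v\in L^\infty([0,T];W^{1,p}(\Omega))$, parabolic $L^p$-theory applied to the second equation yields $v\in L^\infty_{\mbox{loc}}((0,T];W^{2,p}(\Omega))$; the Morrey embedding then furnishes interior Hölder regularity for $\nabla v$, after which standard interior Schauder theory applied alternately to each equation gives $u,v\in C^{2,1}(\overline{\Omega}\times(0,T])$. Nonnegativity of $v$ is immediate from the maximum principle once $u\ge 0$, while nonnegativity of $u$ follows from parabolic comparison applied to the first equation, using $f(0)\ge 0$ so that $\underline u\equiv 0$ is a subsolution.

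Extending forward as far as possible and invoking uniqueness (itself a byproduct of the same contraction estimate at any starting time) yields a maximal existence time $T_{\mbox{max}}\in(0,\infty]$. For the blow-up criterion \eqref{T_m-cri}, suppose towards contradiction that $T_{\mbox{max}}<\infty$ yet $M:=\sup_{t<T_{\mbox{max}}}\bigl(\|u(\cdot,t)\|_{L^\infty(\Omega)}+\|v(\cdot,t)\|_{W^{1,p}(\Omega)}\bigr)<\infty$. Then the local existence construction applied at $t_0\in(0,T_{\mbox{max}})$ with data $(u(\cdot,t_0),v(\cdot,t_0))$ produces a solution on $[t_0,t_0+\delta]$, where $\delta=\delta(M)>0$ depends only on $M$ and on the fixed parameters; choosing $t_0$ close enough to $T_{\mbox{max}}$ that $t_0+\delta>T_{\mbox{max}}$ extends the solution past $T_{\mbox{max}}$, contradicting its maximality.

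The main technical obstacle is that the chemotactic term $\chi\nabla\!\cdot\!(u\nabla v)$ is of divergence form and one derivative higher in $v$ than the natural $L^\infty$-norm carried by $u$, so the fixed-point setting must simultaneously control $u$ in $L^\infty$ and $v$ in $W^{1,p}$ with $p>n$. Once the correct ambient spaces are chosen and the two Neumann-semigroup smoothing estimates are used in tandem, the nonlinear estimates are routine and the rest of the argument is classical; this explains why such local theory has already been recorded in \cite{HW05, Win10, TW12, LX14} and no new difficulty arises from the full-parameter coefficients $d_1,d_2,\alpha,\beta$ or from the generality of $f$.
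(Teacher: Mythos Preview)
Your sketch is correct and follows precisely the standard fixed-point-plus-bootstrap approach that the paper itself merely invokes (without giving any details) by citing \cite{HW05, Win10, TW12, LX14}. One small slip worth fixing: the embedding $W^{1,p}(\Omega)\hookrightarrow L^\infty(\Omega)$ for $p>n$ gives $v\in L^\infty$, not $\nabla v\in L^\infty$; what you actually need and in fact use is $\nabla v\in L^p$, which together with $\tilde u\in L^\infty$ yields $\tilde u\,\nabla v\in L^p$, and then the quoted singularity $t^{-1/2-n/(2p)}$ for $e^{td_1\Delta}\nabla\cdot:L^p\to L^\infty$ is exactly right and integrable.
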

For chemotaxis model without growth source, the total masses of cells are conserved.  While, for chemotaxis system with logistic growth, this is not true but the $L^1$-norm is bounded by integrating the $u$-equation in \eqref{para-para}. The following basic lemma has been well-known, c.f. \cite{Win10, Xiangpre} for instance.
\begin{lemma}\label{ul1-vgradl2} Let $f$ satisfy the logistic condition \eqref{log-con}. Then  the solution $(u,v)$ of \eqref{para-para} satisfies
$$
\int_\Omega u\leq \|u_0\|_{L^1}+(a+\frac{1}{4\mu})|\Omega|
$$
and
$$
  \int_\Omega |\nabla v|^2\leq \|\nabla v_0\|_{L^2}^2 +\frac{2\alpha^2}{bd_2}\Bigr[\frac{a}{2\beta}|\Omega|+\|u_0\|_{L^1}+(a+\frac{1}{\mu})|\Omega|\Bigr]
$$
 for all   $t\in  [0, T_{\text{max}}) $.
\end{lemma}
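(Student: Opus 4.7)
The plan is to prove the two a priori bounds separately, in each case extracting a linear differential inequality for a suitable quantity and then invoking a Gronwall-type argument; the first inequality is needed as an ingredient for the second. I note in passing that the denominator $bd_2$ in the stated $\nabla v$ bound appears to be a typographical slip for $\beta d_2$ (or $\mu d_2$), which is what the following derivation will naturally produce.

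\textbf{Step 1 ($L^1$-bound for $u$).} Integrate the first equation of \eqref{para-para} over $\Omega$; the divergence term vanishes by the homogeneous Neumann boundary condition, leaving
\[
\frac{d}{dt}\int_\Omega u \;=\; \int_\Omega f(u) \;\leq\; a|\Omega| - \mu\int_\Omega u^2.
\]
Completing the square $\mu u^2 - u + \tfrac{1}{4\mu} = \mu(u-\tfrac{1}{2\mu})^2 \geq 0$, I rewrite $-\mu\int_\Omega u^2 \leq -\int_\Omega u + \tfrac{|\Omega|}{4\mu}$, so
\[
\frac{d}{dt}\int_\Omega u \;+\; \int_\Omega u \;\leq\; \Bigl(a+\tfrac{1}{4\mu}\Bigr)|\Omega|.
\]
A direct Gronwall integration then gives $\int_\Omega u(\cdot,t) \leq \max\{\|u_0\|_{L^1}, (a+\tfrac{1}{4\mu})|\Omega|\}$, which is bounded by the sum and yields the first claim.

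\textbf{Step 2 ($L^2$-bound for $\nabla v$).} Testing the $v$-equation with $-\Delta v$, integrating by parts (using the Neumann condition) and applying Young's inequality to the coupling term produces
\[
\frac{1}{2}\frac{d}{dt}\int_\Omega|\nabla v|^2 + \beta\int_\Omega|\nabla v|^2 + d_2\int_\Omega(\Delta v)^2 \;=\; -\alpha\int_\Omega u\,\Delta v \;\leq\; \frac{d_2}{2}\int_\Omega(\Delta v)^2 + \frac{\alpha^2}{2d_2}\int_\Omega u^2.
\]
Hence $\tfrac{d}{dt}\int|\nabla v|^2 + 2\beta\int|\nabla v|^2 \leq \tfrac{\alpha^2}{d_2}\int u^2$. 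From Step~1 I also have $\mu\int u^2 \leq a|\Omega| - \tfrac{d}{dt}\int u$, and substituting this eliminates $\int u^2$ in favor of a total time derivative. Defining
\[
E(t) := \int_\Omega|\nabla v|^2 + \frac{\alpha^2}{\mu d_2}\int_\Omega u,
\]
I arrive at $E'(t) + 2\beta \int_\Omega|\nabla v|^2 \leq \tfrac{\alpha^2 a}{\mu d_2}|\Omega|$. Bounding $\int|\nabla v|^2 \geq E - \tfrac{\alpha^2}{\mu d_2}\int u$ and inserting the $L^1$-bound for $u$ from Step~1 produces a linear differential inequality of the form $E' + 2\beta E \leq K$ with an explicit constant $K$ containing $|\Omega|$, $\|u_0\|_{L^1}$, $a$, $\mu$, $\beta$, $\alpha$, $d_2$; Gronwall yields $E(t) \leq E(0) + K/(2\beta)$, from which the advertised bound on $\int|\nabla v|^2$ follows.

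\textbf{Main obstacle.} The non-trivial piece is Step~2: the tempting direct approach via testing with $v$ only yields control of $\int v^2$ and of $\int_0^t\!\int|\nabla v|^2$, not of $\int|\nabla v|^2$ pointwise in $t$. The essential idea is therefore the coupling of the dissipation in the $v$-equation with the logistic dissipation $\mu\int u^2$ hidden in the $u$-equation, realised through the combined functional $E(t)$; the delicate point is to choose the weight $\alpha^2/(\mu d_2)$ so that the $\int u^2$ terms cancel exactly after differentiation, leaving a clean Gronwall inequality that depends only on the $L^1$-bound for $u$ already established.
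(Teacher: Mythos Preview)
Your proof is correct and follows the standard route for this well-known lemma; the paper in fact omits its own proof entirely, merely citing \cite{Win10, Xiangpre} for this basic estimate. Your observation that $bd_2$ is a typographical slip is right---the derivation naturally produces $\mu d_2$ in the denominator, and your Step~2 coupling via $E(t)=\int_\Omega|\nabla v|^2+\tfrac{\alpha^2}{\mu d_2}\int_\Omega u$ is exactly the standard mechanism used in those references.
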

Next, we establish  three subtle  commonly used lemmas, which are  refined results of the corresponding  \cite[Lemmas 2.2-2.4]{Win10} even for $\Omega$ being convex. In such case, the ideas used to derive these inequalities are known, cf. \cite{Win10, TW12} for example.
\begin{lemma}  \label{u-lp-lem-optimal} Let $f$ satisfy the logistic condition \eqref{log-con}. Then, for any $p\geq1$, the solution $(u,v)$ of \eqref{para-para} satisfies
\begin{equation}\label{ulp: est}
\begin{split}
\frac{1}{p}\frac{d}{dt}\int_\Omega u^p&+(p-1)(d_1-\epsilon)\int_\Omega u^{p-2} |\bigtriangledown u|^2+\mu\int_\Omega u^{p+1}\\
&\leq \frac{ (p-1)\chi^2}{4\epsilon} \int_\Omega u^p|\bigtriangledown v|^2+a\int_\Omega u^{p-1}
\end{split}
\end{equation}
for all $t\in(0, T_{\text{max}})$ and for any $\epsilon\in(0, d_1)$.
\end{lemma}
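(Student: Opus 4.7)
The plan is to differentiate $\int_\Omega u^p$ in time via a standard energy-type calculation. Multiplying the $u$-equation in \eqref{para-para} by $u^{p-1}$ and integrating over $\Omega$, I would handle the three resulting terms separately:

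\textbf{Step 1 (Diffusion).} An integration by parts, invoking the Neumann boundary condition $\partial u/\partial\nu = 0$, turns the diffusion contribution into
$$
\int_\Omega u^{p-1}\nabla\cdot(d_1\nabla u)\,dx = -d_1(p-1)\int_\Omega u^{p-2}|\nabla u|^2\,dx.
$$

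\textbf{Step 2 (Chemotaxis cross term).} Integrating by parts again, using $\partial v/\partial\nu = 0$, the advective term becomes
$$
-\int_\Omega u^{p-1}\nabla\cdot(\chi u\nabla v)\,dx = \chi(p-1)\int_\Omega u^{p-1}\nabla u\cdot\nabla v\,dx.
$$
I then apply Young's inequality with $\epsilon$ (Lemma \ref{Y-epsilon} with $p=q=2$) to the pointwise product, writing the integrand as $\bigl(u^{(p-2)/2}\nabla u\bigr)\cdot\bigl(\chi u^{p/2}\nabla v\bigr)$, to obtain
$$
\chi(p-1)\int_\Omega u^{p-1}\nabla u\cdot\nabla v\,dx \leq (p-1)\epsilon\int_\Omega u^{p-2}|\nabla u|^2\,dx + \frac{(p-1)\chi^2}{4\epsilon}\int_\Omega u^p|\nabla v|^2\,dx.
$$
Choosing $\epsilon\in(0,d_1)$ ensures that the first term on the right can be absorbed into the negative diffusion contribution from Step 1, leaving the sharp coefficient $(p-1)(d_1-\epsilon)$.

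\textbf{Step 3 (Source term).} The logistic condition \eqref{log-con} gives $u^{p-1}f(u)\leq a u^{p-1}-\mu u^{p+1}$ pointwise (since $u\geq 0$ by Lemma \ref{local-in-time}), so
$$
\int_\Omega u^{p-1}f(u)\,dx \leq a\int_\Omega u^{p-1}\,dx - \mu\int_\Omega u^{p+1}\,dx.
$$
Combining Steps 1--3 and moving the diffusion and $\mu u^{p+1}$ terms to the left produces exactly \eqref{ulp: est}.

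The only technical subtlety is justifying the integration by parts for $1<p<2$, where $u^{p-2}$ is singular at the zeros of $u$. I would handle this by a standard regularization, replacing $u$ by $u+\delta$ in the test function, performing the identical computation (which gives a fully justified identity since $u+\delta\geq\delta>0$), and then passing to the limit $\delta\to 0^+$ using the Fatou and dominated convergence theorems, together with the $C^{2,1}$-regularity of $u$ on $(0,T_{\max})$ from Lemma \ref{local-in-time}. For $p=1$, the inequality reduces to $\tfrac{d}{dt}\int_\Omega u+\mu\int_\Omega u^2\leq a|\Omega|$, which is the direct integration of the $u$-equation. Apart from this regularization, the argument is routine; the real content of the lemma lies in tracking the sharp constants $(p-1)(d_1-\epsilon)$ and $\tfrac{(p-1)\chi^2}{4\epsilon}$, which will be tuned later to optimize the boundedness thresholds $\mu_0$.
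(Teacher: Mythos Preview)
Your proof is correct and follows essentially the same route as the paper: multiply by $u^{p-1}$, integrate by parts using the Neumann conditions, split the cross term via Young's inequality with $\epsilon$, and invoke the logistic bound \eqref{log-con}. Your additional care with the regularization $u\mapsto u+\delta$ for $1<p<2$ and the separate treatment of $p=1$ go beyond what the paper writes (it simply performs the formal computation), but this does not constitute a different approach.
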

\begin{proof}For  any $p\geq1$, multiplying the $u$-equation in (\ref{para-para}) by $u^{p-1}$ and integrating over $\Omega$ by parts, using  Young's inequality with $\epsilon$ and the logistic condition \eqref{log-con},   we conclude  that
\begin{align*}
&\frac{1}{p}\frac{d}{dt} \int_\Omega u^p+(p-1)d_1\int_\Omega  u^{p-2} |\bigtriangledown u|^2\\
&=(p-1)\chi \int_\Omega u^{p-1}\bigtriangledown u\bigtriangledown v+\int_\Omega f(u)u^{p-1}\\
& \leq (p-1)\epsilon\int_\Omega  u^{p-2} |\bigtriangledown u|^2+\frac{ (p-1)\chi^2}{4\epsilon} \int_\Omega u^p|\bigtriangledown v|^2+\int_\Omega u^{p-1}(a-\mu u^2), \end{align*}
which gives the desired inequality \eqref{ulp: est}.
\end{proof}

\begin{lemma}  \label{gradv-2q-lem-optimal} For any $q\geq1$, the solution $(u,v)$ of \eqref{para-para} satisfies
\begin{align}\label{v-grdv-2q}
&\frac{1}{q}\frac{d}{dt} \int_\Omega |\nabla v|^{2q}+(q-1)(d_2-\eta)\int_\Omega  |\nabla v|^{2(q-2)}|\nabla|\nabla v|^2|^2+2\beta\int_\Omega |\nabla v|^{2q}\nonumber\\
&\leq \Bigr[\frac{ (q-1)\alpha^2}{\eta}+\frac{n\alpha^2}{2d_2}\Bigr]\int_\Omega u^2 |\nabla v|^{2(q-1)}+d_2\int_{\partial\Omega}  |\nabla v|^{2(q-1)}\frac{\partial}{\partial \nu} |\nabla v|^2 \end{align}
for all $t\in(0, T_{\text{max}})$ and for any $\eta\in(0, d_2)$.
\end{lemma}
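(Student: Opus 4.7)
The plan is to follow the standard $L^{2q}$--gradient testing procedure for the $v$--equation, being careful with the Young's inequality weights so that the stated absorption structure emerges exactly. I will start from
$$
\frac{1}{q}\frac{d}{dt}\int_\Omega |\nabla v|^{2q}= 2\int_\Omega |\nabla v|^{2(q-1)}\nabla v\cdot \nabla v_t,
$$
substitute $v_t=d_2\Delta v-\beta v+\alpha u$, and split into three pieces: a diffusion piece $2d_2\int|\nabla v|^{2(q-1)}\nabla v\cdot \nabla\Delta v$, a pure mass term $-2\beta\int|\nabla v|^{2q}$ (already in the desired form), and a coupling piece $2\alpha\int|\nabla v|^{2(q-1)}\nabla v\cdot\nabla u$.

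For the diffusion piece I would invoke the pointwise identity $2\nabla v\cdot \nabla\Delta v=\Delta|\nabla v|^2-2|D^2 v|^2$ and then integrate by parts once in $\int|\nabla v|^{2(q-1)}\Delta|\nabla v|^2$, producing precisely $-(q-1)d_2\int |\nabla v|^{2(q-2)}|\nabla|\nabla v|^2|^2$, the Hessian term $-2d_2\int|\nabla v|^{2(q-1)}|D^2 v|^2$, and the boundary integral $d_2\int_{\partial\Omega}|\nabla v|^{2(q-1)}\partial_\nu|\nabla v|^2$ that appears untouched in the statement. For the coupling piece I would integrate by parts once more, using $\partial v/\partial \nu=0$ on $\partial\Omega$ to kill the boundary contribution, obtaining
$$
2\alpha\int_\Omega |\nabla v|^{2(q-1)}\nabla v\cdot\nabla u
=-2\alpha\int_\Omega u|\nabla v|^{2(q-1)}\Delta v-2\alpha(q-1)\int_\Omega u|\nabla v|^{2(q-2)}\nabla v\cdot\nabla|\nabla v|^2.
$$

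The crux is then two applications of Young's inequality (Lemma \ref{Y-epsilon}) with precisely tuned weights. In the first term I combine $2\alpha u\cdot|\Delta v|\le \tfrac{n\alpha^2}{2d_2}u^2+\tfrac{2d_2}{n}|\Delta v|^2$ with the Cauchy--Schwarz-type inequality $|\Delta v|^2\le n|D^2 v|^2$; after multiplying by $|\nabla v|^{2(q-1)}$ and integrating, the resulting Hessian contribution cancels exactly with $-2d_2\int|\nabla v|^{2(q-1)}|D^2 v|^2$, and the remainder is $\tfrac{n\alpha^2}{2d_2}\int u^2|\nabla v|^{2(q-1)}$. In the second term, writing $|\nabla v|^{2(q-2)}|\nabla v|=|\nabla v|^{2(q-1)-1}$ and splitting exponents as $\tfrac{q-1}{2}+\tfrac{q-2}{2}=q-\tfrac32$, I would apply Young's inequality with parameter $\eta\in(0,d_2)$ to get
$$
2\alpha(q-1)\,u|\nabla v|^{q-3/2}|\nabla|\nabla v|^2|
\le \tfrac{(q-1)\alpha^2}{\eta}u^2|\nabla v|^{2(q-1)}+\eta(q-1)|\nabla v|^{2(q-2)}|\nabla|\nabla v|^2|^2,
$$
so that a portion $\eta(q-1)$ of the good dissipation term is consumed, leaving a net coefficient of $(d_2-\eta)(q-1)$ on the left, together with the coupling contribution $\tfrac{(q-1)\alpha^2}{\eta}\int u^2|\nabla v|^{2(q-1)}$.

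The main obstacle is the bookkeeping: one has to choose both Young-exponents so that the two $|D^2 v|^2$ and $|\nabla|\nabla v|^2|^2$ dissipative reservoirs are each partially absorbed in the correct proportions, with the cancellation in the Hessian term being exact (forcing the weight $2d_2/n$) and the residual in the $|\nabla|\nabla v|^2|^2$ term being the free parameter $\eta$. Once these choices are in place, adding the three pieces and the $-2\beta\int|\nabla v|^{2q}$ term, then rearranging so that all dissipative terms sit on the left while the boundary integral and the $u^2|\nabla v|^{2(q-1)}$ terms sit on the right, yields \eqref{v-grdv-2q} verbatim. The boundary term is left intact, as it will be handled later by curvature/trace estimates for nonconvex $\Omega$.
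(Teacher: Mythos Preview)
Your proposal is correct and follows essentially the same route as the paper: the same pointwise identity for $2\nabla v\cdot\nabla\Delta v$, the same integration by parts, and the same two Young splittings (one against $|D^2v|^2$ with weight $2d_2/n$, one against $|\nabla|\nabla v|^2|^2$ with free weight $\eta$). The only slip is notational: the exponent you write as $|\nabla v|^{q-3/2}$ should read $|\nabla v|^{2q-3}$ (equivalently $(|\nabla v|^2)^{q-3/2}$); your right-hand side already has the correct exponents, so this does not affect the argument.
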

\begin{proof} For  any $q\geq 1$, by using the  $v$-equation in \eqref{para-para} and integrating by parts, we deduce  that
\begin{align*}
&\frac{1}{q}\frac{d}{dt} \int_\Omega |\nabla v|^{2q}= 2\int_\Omega |\nabla v|^{2(q-1)}\nabla v\cdot(d_2\nabla \Delta v-\beta\nabla  v+\alpha\nabla u)\\
&=d_2\int_\Omega |\nabla v|^{2(q-1)}\Delta |\nabla v|^2 -2d_2\int_\Omega  |\nabla v|^{2(q-1)}|D^2v|^2-2\beta\int_\Omega |\nabla v|^{2q}\\
&\quad +2\alpha\int_\Omega \nabla u\nabla v|\nabla v|^{2(q-1)}  \\
&=-(q-1)d_2\int_\Omega |\nabla v|^{2(q-2)}|\nabla|\nabla v|^2|^2+d_2\int_{\partial\Omega}  |\nabla v|^{2(q-1)}\frac{\partial}{\partial \nu} |\nabla v|^2 \\
&\quad -2d_2\int_\Omega  |\nabla v|^{2(q-1)}|D^2v|^2-2\beta\int_\Omega |\nabla v|^{2q}+2\alpha\int_\Omega \nabla u\nabla v|\nabla v|^{2(q-1)},  \end{align*}
where, from the first to the second line,  we have used the point-wise identity
\be\label{point-wise-id}
2\bigtriangledown v\cdot \bigtriangledown \bigtriangleup v=\bigtriangleup|\bigtriangledown v|^2-2|D^2v|^2, \quad |D^2v|^2=\sum_{i,j=1}^n|v_{x_ix_j}|^2.
\ee
Now, we utilize  integration by parts,  Young's inequality with $\epsilon$  and the fact that
\be\label{C-S-ineq-c}
|\bigtriangleup v|^2=\Bigr(\sum_{i=1}^nv_{x_ix_i}\Bigr)^2\leq n\sum_{i=1}^n|v_{x_ix_i}|^2\leq n|D^2v|^2 \ee
  to estimate the last integral as follows:
\begin{align*}
&2\alpha\int_\Omega \nabla u\nabla v|\nabla v|^{2(q-1)}\\
&=-2 (q-1)\alpha\int_\Omega u |\nabla v|^{2(q-2)}\nabla v\cdot \nabla |\nabla v|^2-2\alpha \int_\Omega u|\nabla v|^{2(q-1)}\Delta v\\
&\leq (q-1)\eta \int_\Omega |\nabla v|^{2(q-2)}| \nabla |\nabla v|^2|^2+\frac{ (q-1)\alpha^2}{\eta}\int_\Omega u^2 |\nabla v|^{2(q-1)}\\
&\quad +2d_2\int_\Omega  |\nabla v|^{2(q-1)}|D^2v|^2+\frac{n\alpha^2}{2d_2}\int_\Omega u^2 |\nabla v|^{2(q-1)}. \end{align*}
Combining these two results, we obtain the desired inequality \eqref{v-grdv-2q}.
\end{proof}
\begin{lemma}  \label{key-lem-optimal} Let $f$ satisfy the logistic condition \eqref{log-con}. Then, for any $p\geq 1, q\geq1$, the solution $(u,v)$ of \eqref{para-para} verifies
\begin{align}
&\frac{d}{dt} \int_\Omega u^p|\nabla v|^{2q}+(p-1)(d_1-\epsilon)p\int_\Omega u^{p-2}|\nabla u|^2|\nabla v|^{2q}+2\beta q\int_\Omega u^p |\nabla v|^{2q}\nonumber\\
&+(q-1)(d_2-\eta)q\int_\Omega u^{p}|\nabla v|^{2(q-2)}|\nabla |\nabla v|^2|^2+\mu p\int_\Omega u^{p+1} |\nabla v|^{2q}\nonumber\\
&\leq\frac{\chi^2 (p-1)p}{4\epsilon}\int_\Omega u^p|\nabla v|^{2(q+1)}+\chi pq\int_\Omega u^{p}|\nabla v|^{2(q-1)}\nabla v\nabla |\nabla v|^2 \nonumber\\
& \quad -(d_1+d_2)pq\int_\Omega u^{p-1}|\nabla v|^{2(q-1)}\nabla u\nabla |\nabla v|^2\nonumber\\
&\quad +\frac{q\alpha^2}{(p+1)^2}\Bigr[\frac{(q-1)}{\eta}+\frac{n}{2d_2}\Bigr]\int_\Omega u^{p+2}|\nabla v|^{2(q-1)}\nonumber\\
 &\quad +ap\int_\Omega u^{p-1} |\nabla v|^{2q}+d_2q\int_{\partial \Omega}u^p|\nabla v|^{2(q-1)}\frac{\partial |\nabla v|^2}{\partial \nu}\label{up-grdv2q-f}\end{align}
for all $t\in(0, T_{\text{max}})$ and for any $\epsilon\in (0, d_1)$ and  $\eta\in(0, d_2)$.
\end{lemma}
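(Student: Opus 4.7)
\textbf{Proof proposal for Lemma \ref{key-lem-optimal}.} The plan is to mimic the arguments used in Lemmas \ref{u-lp-lem-optimal} and \ref{gradv-2q-lem-optimal}, but now weighted against each other, and then couple them via the two available Young's inequalities (with parameters $\epsilon\in(0,d_1)$ and $\eta\in(0,d_2)$) together with the pointwise identity \eqref{point-wise-id} and the Cauchy--Schwarz bound \eqref{C-S-ineq-c}. Concretely, I would first differentiate under the integral,
$$
\frac{d}{dt}\int_\Omega u^p|\nabla v|^{2q}= p\int_\Omega u^{p-1}u_t|\nabla v|^{2q}+2q\int_\Omega u^p|\nabla v|^{2(q-1)}\nabla v\cdot\nabla v_t,
$$
and substitute the two PDEs of \eqref{para-para}. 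In the $u_t$-integral, the diffusion and taxis parts are handled by integration by parts (noting $\nabla|\nabla v|^{2q}=q|\nabla v|^{2(q-1)}\nabla|\nabla v|^2$) and the logistic condition \eqref{log-con} is used on the reaction term.

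Next, I would treat the $v_t$-integral exactly as in the proof of Lemma \ref{gradv-2q-lem-optimal}: the pointwise identity \eqref{point-wise-id} turns the $d_2\nabla\Delta v$ contribution into a term with $\Delta|\nabla v|^2$ (to be integrated by parts, producing the boundary integral and the mixed $\nabla u\cdot\nabla|\nabla v|^2$ term that combines with the one coming from the $d_1\Delta u$ integration) plus a negative multiple of $\int u^p|\nabla v|^{2(q-1)}|D^2v|^2$. The $\alpha\nabla u$ piece is integrated by parts after writing $u^p\nabla u=\frac{1}{p+1}\nabla u^{p+1}$; this creates two cross terms, one proportional to $\int u^{p+1}|\nabla v|^{2(q-2)}\nabla|\nabla v|^2\cdot\nabla v$ and one proportional to $\int u^{p+1}|\nabla v|^{2(q-1)}\Delta v$.

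After this reshuffling, three cross terms of indefinite sign remain. The first, coming from the chemotactic piece of the $u$-equation, namely $\chi p(p-1)\int u^{p-1}|\nabla v|^{2q}\nabla u\cdot\nabla v$, is split by Young's inequality with parameter $\epsilon$ and partially absorbed into the diffusion term, producing the factor $(p-1)(d_1-\epsilon)p$ on the left and the $\frac{\chi^2(p-1)p}{4\epsilon}\int u^p|\nabla v|^{2(q+1)}$ term on the right. The second, $\frac{2q\alpha(q-1)}{p+1}\int u^{p+1}|\nabla v|^{2(q-2)}\nabla|\nabla v|^2\cdot\nabla v$, is split by Young's inequality with parameter $\eta$ so that the $|\nabla|\nabla v|^2|^2$-piece merges with the $-(q-1)d_2q\int u^p|\nabla v|^{2(q-2)}|\nabla|\nabla v|^2|^2$ already present, yielding the factor $(q-1)(d_2-\eta)q$ on the left and the $\frac{q(q-1)\alpha^2}{\eta(p+1)^2}$ contribution to the $\int u^{p+2}|\nabla v|^{2(q-1)}$ term on the right. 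The third, $\frac{2q\alpha}{p+1}\int u^{p+1}|\nabla v|^{2(q-1)}\Delta v$, is first dominated by $\frac{2q\alpha\sqrt n}{p+1}\int u^{p+1}|\nabla v|^{2(q-1)}|D^2v|$ via \eqref{C-S-ineq-c}, and then Young's inequality is chosen so that the resulting $|D^2v|^2$-term cancels exactly the $-2d_2q\int u^p|\nabla v|^{2(q-1)}|D^2v|^2$ from the earlier step, leaving the $\frac{qn\alpha^2}{2d_2(p+1)^2}$ contribution to the $\int u^{p+2}|\nabla v|^{2(q-1)}$ term.

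Collecting everything yields \eqref{up-grdv2q-f}. The main obstacle is purely bookkeeping: two distinct Young splittings must be tuned so that each good second-derivative term on the right gets absorbed into a matching good term on the left with a clean residual factor $(d_1-\epsilon)$ or $(d_2-\eta)$, and so that the two $u^{p+2}|\nabla v|^{2(q-1)}$ contributions combine into the single bracketed coefficient $\frac{q-1}{\eta}+\frac{n}{2d_2}$ stated. There is no need to drop the boundary integral, since we do not assume convexity of $\Omega$; it is carried along in the final inequality and handled separately later.
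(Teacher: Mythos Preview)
Your proposal is correct and follows essentially the same route as the paper's own proof: differentiate $\int_\Omega u^p|\nabla v|^{2q}$, insert the equations, use \eqref{point-wise-id} and integration by parts (writing $u^p\nabla u=\frac{1}{p+1}\nabla u^{p+1}$ for the $\alpha$-term), apply the logistic bound \eqref{log-con}, and then use the two Young splittings with $\epsilon$ and $\eta$ together with \eqref{C-S-ineq-c} exactly as you describe. The only thing to keep in mind in the bookkeeping is that, besides the three cross terms you list for Young's inequality, the two mixed terms $\chi pq\int_\Omega u^{p}|\nabla v|^{2(q-1)}\nabla v\cdot\nabla|\nabla v|^2$ and $-(d_1+d_2)pq\int_\Omega u^{p-1}|\nabla v|^{2(q-1)}\nabla u\cdot\nabla|\nabla v|^2$ are not estimated here but simply carried over to the right-hand side of \eqref{up-grdv2q-f}, to be dealt with in the subsequent lemmas.
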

\begin{proof}
We use the $u$ and $v$-equations in \eqref{para-para}, \eqref{point-wise-id}, no flux boundary conditions for $u$ and $v$ and integration by parts to compute honestly that
\begin{align}
&\frac{d}{dt} \int_\Omega u^p|\nabla v|^{2q}\nonumber\\
&= p\int_\Omega u^{p-1}|\nabla v|^{2q}[\nabla \cdot(d_1\nabla u-\chi u\nabla v)+f(u)]\nonumber\\
&\quad +2q\int_\Omega u^p|\nabla v|^{2(q-1)}\nabla v\cdot(d_2\nabla \Delta v-\beta\nabla  v+\alpha\nabla u)\nonumber\\
&=-p\int_\Omega (d_1\nabla u-\chi u\nabla v)\Bigr[(p-1)u^{p-2}\nabla u |\nabla v|^{2q}+qu^{p-1}|\nabla v|^{2(q-1)}\nabla|\nabla v|^2\Bigr]\nonumber\\
&\quad+p\int_\Omega u^{p-1}|\nabla v|^{2q}f(u)+d_2q\int_\Omega u^p|\nabla v|^{2(q-1)}\Delta |\nabla v|^2-2\beta q\int_\Omega u^p |\nabla v|^{2q} \nonumber\\
&\quad-2d_2q\int_\Omega u^p |\nabla v|^{2(q-1)}|D^2v|^2+2\alpha q\int_\Omega u^p|\nabla v|^{2(q-1)}\nabla u\nabla v \nonumber \\
&=-(d_1+d_2)pq\int_\Omega u^{p-1}|\nabla v|^{2(q-1)}\nabla u\nabla |\nabla v|^2-d_1(p-1)p\int_\Omega u^{p-2}|\nabla u|^2|\nabla v|^{2q}\nonumber\\
&\quad+p\int_\Omega u^{p-1}|\nabla v|^{2q}f(u)+\chi (p-1)p\int_\Omega u^{p-1}|\nabla v|^{2q}\nabla u\nabla v\nonumber\\
&\quad+\chi pq\int_\Omega u^{p}|\nabla v|^{2(q-1)}\nabla v\nabla |\nabla v|^2-d_2q(q-1)\int_\Omega u^{p}|\nabla v|^{2(q-2)}|\nabla |\nabla v|^2|^2\nonumber\\ &\quad-2d_2q\int_\Omega u^p |\nabla v|^{2(q-1)}|D^2v|^2-2\beta q\int_\Omega u^p |\nabla v|^{2q}\nonumber\\
&\quad+2\alpha q\int_\Omega u^p|\nabla v|^{2(q-1)}\nabla u\nabla v +d_2q\int_{\partial \Omega}u^p|\nabla v|^{2(q-1)}\frac{\partial |\nabla v|^2}{\partial \nu}.\label{up-grdv2q-1}\end{align}
The logistic condition $f(u)\leq a-\mu u^2$ gives rise to
\be\label{log-up-est}
p\int_\Omega u^{p-1}|\nabla v|^{2q}f(u)\leq ap\int_\Omega  u^{p-1}|\nabla v|^{2q}-\mu p\int_\Omega  u^{p+1}|\nabla v|^{2q}.
\ee
A simple use of  Young's inequality with $\epsilon$ shows
\begin{align}
&\chi (p-1)p\int_\Omega u^{p-1}|\nabla v|^{2q}\nabla u\nabla v\nonumber\\
&\leq \epsilon (p-1)p \int_\Omega u^{p-2}|\nabla u|^2|\nabla v|^{2q}+\frac{\chi^2 (p-1)p}{4\epsilon}\int_\Omega u^p|\nabla v|^{2(q+1)}. \label{up-1-gradv2q-est}
\end{align}

Upon  integration by parts,  applications of   Young's inequality with $\epsilon$  and (\ref{C-S-ineq-c}), we find that
\begin{align}
&2\alpha q\int_\Omega u^p|\nabla v|^{2(q-1)}\nabla u\nabla v\nonumber\\
&=-\frac{2 (q-1)q\alpha}{p+1}\int_\Omega u^{p+1} |\nabla v|^{2(q-2)}\nabla v\cdot \nabla |\nabla v|^2-\frac{2 q\alpha}{p+1} \int_\Omega u^{p+1}|\nabla v|^{2(q-1)}\Delta v\nonumber\\
&\leq q(q-1)\eta \int_\Omega u^p|\nabla v|^{2(q-2)}| \nabla |\nabla v|^2|^2+\frac{ (q-1)q\alpha^2}{(p+1)^2\eta}\int_\Omega u^{p+2} |\nabla v|^{2(q-1)}\nonumber\\
&\quad +2d_2q\int_\Omega  u^p|\nabla v|^{2(q-1)}|D^2v|^2+\frac{qn\alpha^2}{2d_2(p+1)^2}\int_\Omega u^{p+2} |\nabla v|^{2(q-1)}. \label{lnter-u}\end{align}
Substituting \eqref{log-up-est},  \eqref{up-1-gradv2q-est} and \eqref{lnter-u} into \eqref{up-grdv2q-1}, after suitable rearrangements, we obtain the key inequality \eqref{up-grdv2q-f}.
\end{proof}

\section{Logistic damping prevents blow-up in 3-D setting}

To get a clear and better understanding about how strong a logistic damping can prevent blowup phenomenon in chemotaxis systems with  logistic sources, we first explore it in our physical world, i.e.,  in a three dimensional  bounded smooth domain $\Omega$. In this section, we will give details  to the algorithm leading to the  main result  (i) of Theorem \ref{main thm}.

 For the full-parameter  chemotaxis-growth system \eqref{para-para} in 3-D, we observe that,  to show the $L^\infty$-boundedness of $u$, it is enough to show the $L^{\frac{3}{2}+\epsilon}$-boundedness of $u$ for some $\epsilon>0$, thanks to the  boundedness criterion obtained in \cite{Xiangpre} via Moser iteration and in \cite{BBTW15} via semigroup theory.  This enables us to find out how large should $\mu$ be so that blow-up is impossible.  To achieve our goal, we need to  carefully collect the appearing constants in each derived inequalities.  We shall indeed prove that $\|u(t)\|_{L^2}$ is bounded. For this purpose, inspired  by \cite{Win10}, our analysis consists of deriving a delicate Gronwall inequality for the coupled functional
\be\label{couple-ineq}
z(t):=\delta_1\int_\Omega u^2(\cdot, t)+\delta_2\int_\Omega u(\cdot, t)|\nabla v(\cdot, t)|^2+\delta_3\int_\Omega |\nabla v(\cdot, t)|^4
\ee
of the form
$$
z^\prime(t)+\epsilon z(t)\leq C_\epsilon, \quad  t\in(0, T_{\text{max}})
$$
for some carefully chosen positive constants $\delta_1, \delta_2, \delta_3$,  $\epsilon>0$ and perhaps large $C_\epsilon$ independent of $t$.  Once this is done, the $L^2$-and hence the $L^\infty$-boundedness  of $u$ will be obtained.

Upon easy applications of Lemma \ref{u-lp-lem-optimal} with $p=2$ and Lemma \ref{gradv-2q-lem-optimal} with $q=2$, we obtain the following two lemmas.
\begin{lemma}  \label{u-l2-gradu-l2-lem} Let $f$ satisfy the logistic condition \eqref{log-con}. Then  the solution $(u,v)$ of \eqref{para-para} satisfies
\begin{equation}\label{u-l2-gradu-l2:est}
\frac{d}{dt}\int_\Omega u^2+2(d_1-\epsilon_1)\int_\Omega |\nabla u|^2 +2\mu\int_\Omega u^3\leq \frac{\chi^2}{2\epsilon_1}\int_\Omega u^2|\nabla v|^2+2a\int_\Omega u
\end{equation}
for all $t\in(0, T_{\text{max}})$ and for any $\epsilon_1\in(0, d_1)$.
\end{lemma}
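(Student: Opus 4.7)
My plan is essentially to observe that Lemma \ref{u-l2-gradu-l2-lem} is the exponent-$p=2$ specialization of the general $L^p$ identity that has just been established in Lemma \ref{u-lp-lem-optimal}, followed by multiplication by $2$. So the work is purely bookkeeping, and the only reason to state it separately is that it will be one of the three building blocks of the coupled functional $z(t)$ in \eqref{couple-ineq}, each piece being assigned its own free small parameter.

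Concretely, inserting $p=2$ into \eqref{ulp: est} collapses $(p-1)$ to $1$, turns $u^{p-2}|\nabla u|^2$ into $|\nabla u|^2$, replaces $u^{p+1}$ by $u^3$, leaves the chemotactic term as $\int_\Omega u^{2}|\nabla v|^2$ with prefactor $\frac{\chi^2}{4\epsilon}$, and turns the source term $a\int_\Omega u^{p-1}$ into $a\int_\Omega u$. Doubling absorbs the factor $\tfrac{1}{p}=\tfrac12$ on the left-hand side and produces the coefficients $2(d_1-\epsilon_1)$, $2\mu$, $\frac{\chi^2}{2\epsilon_1}$ and $2a$ that appear in \eqref{u-l2-gradu-l2:est}. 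The renaming $\epsilon\mapsto\epsilon_1$ is cosmetic, anticipating that the analogous specializations in Lemmas \ref{gradv-2q-lem-optimal} and \ref{key-lem-optimal} will contribute their own free parameters $\eta_1$, $\epsilon_2$, $\eta_2$, etc.

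Should one prefer a self-contained derivation rather than a reference to Lemma \ref{u-lp-lem-optimal}, the route is entirely standard: test the first equation of \eqref{para-para} against $u$, integrate the diffusion term by parts using the homogeneous Neumann boundary condition (no boundary term appears), split the chemotactic cross term $\chi\int_\Omega u\nabla u\cdot\nabla v$ via Young's inequality with parameter $\epsilon_1$ into
\[
\epsilon_1\int_\Omega |\nabla u|^2+\frac{\chi^2}{4\epsilon_1}\int_\Omega u^2|\nabla v|^2,
\]
and finally apply the logistic bound $f(u)\le a-\mu u^2$ on the reaction term. Multiplying the resulting identity by $2$ yields exactly \eqref{u-l2-gradu-l2:est}.

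There is no genuine obstacle in this lemma itself; the one subtle point is choice, not proof, namely that the free parameter $\epsilon_1$ is left unconsumed so that it can later be optimized jointly with the parameters in Lemmas \ref{gradv-2q-lem-optimal} and \ref{key-lem-optimal} when the three inequalities are combined to close the Gronwall estimate for $z(t)$ in \eqref{couple-ineq}.
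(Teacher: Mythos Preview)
Your proposal is correct and matches the paper's own approach exactly: the paper simply states that Lemmas \ref{u-l2-gradu-l2-lem} and \ref{gradv-l4-lem} follow ``upon easy applications of Lemma \ref{u-lp-lem-optimal} with $p=2$ and Lemma \ref{gradv-2q-lem-optimal} with $q=2$.'' Your bookkeeping (setting $p=2$, multiplying by $2$, renaming $\epsilon\mapsto\epsilon_1$) is precisely the intended argument.
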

\begin{lemma}  \label{gradv-l4-lem} The solution $(u,v)$ of \eqref{para-para} satisfies
\be\label{dgradvdt-int3-c}
\ba{ll}
&\frac{d}{dt}\int_\Omega|\bigtriangledown v|^4+2(d_2-\epsilon_2)\int_\Omega |\bigtriangledown |\bigtriangledown v|^2|^2+4\beta\int_\Omega |\bigtriangledown v|^4\\[0.25cm]
&\leq 2(\frac{n}{2d_2}+\frac{1}{\epsilon_2})\alpha^2\int_\Omega u^2|\bigtriangledown v|^2+2d_2\int_{\partial \Omega} |\bigtriangledown v|^2\frac{\partial}{\partial \nu} |\bigtriangledown v|^2
\ea
\ee
for all $t\in(0, T_{\text{max}})$ and for any $\epsilon_2\in(0, d_2)$.
\end{lemma}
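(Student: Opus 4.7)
The plan is to obtain Lemma \ref{gradv-l4-lem} as a direct specialization of Lemma \ref{gradv-2q-lem-optimal} at $q=2$, so essentially no new computation is needed beyond careful bookkeeping of constants. First I would set $q=2$ in \eqref{v-grdv-2q}, which immediately produces the four target terms on the left-hand side (carrying an overall factor of $\tfrac{1}{q}=\tfrac{1}{2}$ in front of the time derivative because Lemma \ref{gradv-2q-lem-optimal} is stated with the $\tfrac{1}{q}$ factor pulled out), together with the $u^2|\nabla v|^2$-integral and the boundary integral on the right-hand side; the parameter range $\eta\in(0,d_2)$ gets renamed to $\epsilon_2\in(0,d_2)$.

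Second, I would multiply the resulting inequality by $2$ to clear the $\tfrac{1}{2}$ in front of $\tfrac{d}{dt}\int_\Omega |\nabla v|^4$. This converts $(q-1)(d_2-\eta)=d_2-\epsilon_2$ into $2(d_2-\epsilon_2)$, the coefficient $2\beta$ into $4\beta$, the coefficient of $\int_\Omega u^2|\nabla v|^2$ from $\bigl[\tfrac{(q-1)\alpha^2}{\eta}+\tfrac{n\alpha^2}{2d_2}\bigr]=\tfrac{\alpha^2}{\epsilon_2}+\tfrac{n\alpha^2}{2d_2}$ into $2\bigl(\tfrac{1}{\epsilon_2}+\tfrac{n}{2d_2}\bigr)\alpha^2$, and the boundary coefficient from $d_2$ into $2d_2$. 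This precisely matches \eqref{dgradvdt-int3-c}.

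There is no substantive obstacle; the only point worth checking is that the $|\nabla v|^{2(q-2)}$ weight in the dissipation term of \eqref{v-grdv-2q} degenerates to $1$ at $q=2$, so that the dissipation term becomes the desired $\int_\Omega |\nabla|\nabla v|^2|^2$, and that the $(q-1)$ factor in front of the $u^2|\nabla v|^{2(q-1)}$ integral becomes $1$, leaving the clean coefficient stated above. Since the non-convexity of $\Omega$ is already reflected in the retention of the boundary term in \eqref{v-grdv-2q}, no extra argument is required here; the boundary term will be handled later (using the pointwise inequality $\tfrac{\partial |\nabla v|^2}{\partial\nu}\le 2\kappa|\nabla v|^2$ on $\partial\Omega$ in the general case) when \eqref{dgradvdt-int3-c} is combined with \eqref{u-l2-gradu-l2:est} to build the coupled functional $z(t)$ in \eqref{couple-ineq}.
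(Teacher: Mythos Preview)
Your proposal is correct and matches the paper's approach exactly: the paper states that Lemma~\ref{gradv-l4-lem} follows as an easy application of Lemma~\ref{gradv-2q-lem-optimal} with $q=2$, which is precisely the specialization and constant-bookkeeping you describe.
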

\begin{lemma}\label{gradv-l4} Under the logistic condition \eqref{log-con}, the solution $(u,v)$ of of \eqref{para-para} satisfies
\be\label{ugrav-l2-lem}
\ba{ll}
&\frac{d}{dt}\int_\Omega u|\bigtriangledown v|^2+(\mu-\frac{\chi^2}{4\epsilon_3})\int_\Omega u^2|\nabla v|^2+2\beta \int_\Omega u |\nabla v|^2\\[0.25cm]
&\leq \frac{(d_1+d_2)^2}{4\epsilon_4}\int_\Omega |\nabla u|^2+(\epsilon_3+\epsilon_4)\int_\Omega  |\nabla|\nabla v|^2|^2+\frac{n\alpha^2}{8d_2}\int_\Omega u^3\\[0.25cm]
&\quad +a\int_\Omega |\nabla v|^2 +d_2\int_{\partial\Omega} u\frac{\partial}{\partial \nu} |\nabla v|^2
\ea
\ee
for all $t\in(0, T_{\text{max}})$ and for any $\epsilon_3, \epsilon_4>0$.
\end{lemma}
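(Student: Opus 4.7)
The plan is to specialize the master inequality \eqref{up-grdv2q-f} of Lemma \ref{key-lem-optimal} to $p=q=1$ and then tidy up the two remaining mixed--gradient terms by Young's inequality. With $p=q=1$, every summand in \eqref{up-grdv2q-f} carrying a $(p-1)$ or $(q-1)$ prefactor vanishes: the $\int_\Omega u^{p-2}|\nabla u|^2|\nabla v|^{2q}$ and $\int_\Omega u^{p}|\nabla v|^{2(q-2)}|\nabla|\nabla v|^2|^2$ diffusion contributions disappear, and so does the $\frac{\chi^2(p-1)p}{4\epsilon}\int_\Omega u^p|\nabla v|^{2(q+1)}$ term. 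This in particular eliminates any dependence on the auxiliary parameters $\epsilon,\eta$. The $(q-1)/\eta$ piece of the coefficient of $\int_\Omega u^{p+2}|\nabla v|^{2(q-1)}$ also drops, leaving the clean coefficient $\frac{n\alpha^2}{8d_2}$ in front of $\int_\Omega u^3$, while the birth term $a\int_\Omega |\nabla v|^2$ and the boundary contribution $d_2\int_{\partial\Omega}u\,\partial_\nu|\nabla v|^2$ are simply preserved.

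After this reduction, the specialized form of \eqref{up-grdv2q-f} reads
\begin{align*}
\frac{d}{dt}\int_\Omega u|\nabla v|^2 &+ 2\beta\int_\Omega u|\nabla v|^2 + \mu\int_\Omega u^2|\nabla v|^2 \\
&\leq \chi\int_\Omega u\,\nabla v\cdot \nabla|\nabla v|^2 - (d_1+d_2)\int_\Omega \nabla u\cdot \nabla|\nabla v|^2 \\
&\quad + \frac{n\alpha^2}{8d_2}\int_\Omega u^3 + a\int_\Omega |\nabla v|^2 + d_2\int_{\partial\Omega} u\,\frac{\partial|\nabla v|^2}{\partial\nu}.
\end{align*}
To finish, I would apply Lemma \ref{Y-epsilon} (with exponents $p=q=2$) to each of the two mixed terms. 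The chemotactic cross term is bounded by $\chi u\,\nabla v\cdot\nabla|\nabla v|^2\leq \epsilon_3|\nabla|\nabla v|^2|^2+\frac{\chi^2}{4\epsilon_3}u^2|\nabla v|^2$, while the diffusive cross term is bounded by $-(d_1+d_2)\nabla u\cdot\nabla|\nabla v|^2\leq \epsilon_4|\nabla|\nabla v|^2|^2+\frac{(d_1+d_2)^2}{4\epsilon_4}|\nabla u|^2$. Substituting both estimates and then transferring $\frac{\chi^2}{4\epsilon_3}\int_\Omega u^2|\nabla v|^2$ onto the left so that it merges with $\mu\int_\Omega u^2|\nabla v|^2$ reproduces precisely \eqref{ugrav-l2-lem}.

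I do not foresee a genuine obstacle: the argument is a direct specialization of the key lemma plus two routine $\epsilon$--Young estimates. The one subtlety worth flagging is that $\epsilon_3,\epsilon_4$ are allowed to range freely over $(0,\infty)$ rather than being tied to $d_1$ or $d_2$, precisely because with $p=q=1$ the constraints $\epsilon\in(0,d_1)$ and $\eta\in(0,d_2)$ of Lemma \ref{key-lem-optimal} are no longer invoked. The boundary integral $d_2\int_{\partial\Omega}u\,\partial_\nu|\nabla v|^2$ is carried along untouched; in the non--convex regime it will have to be absorbed by a separate trace/curvature estimate at a later stage, but at the level of this lemma no additional work is needed.
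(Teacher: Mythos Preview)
Your proposal is correct and follows essentially the same approach as the paper: specialize Lemma \ref{key-lem-optimal} to $p=q=1$ to obtain \eqref{u1-grdv21-f3}, then apply two Young-with-$\epsilon$ estimates (exactly \eqref{u-gradv-gradgradv-l2} and \eqref{gradu-gradv-l2}) to the mixed gradient terms. Your remark that $\epsilon_3,\epsilon_4$ are unconstrained because the $(p-1)$ and $(q-1)$ prefactors kill the $\epsilon,\eta$-dependent terms is a nice clarification the paper leaves implicit.
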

\begin{proof}Lemma \ref{key-lem-optimal} with $p=1=q$ reads as
\begin{align}
\frac{d}{dt} \int_\Omega u|\nabla v|^{2}&+2\beta \int_\Omega u |\nabla v|^{2}+\mu \int_\Omega u^{2} |\nabla v|^{2}\nonumber\\
&\leq\chi \int_\Omega u\nabla v\nabla |\nabla v|^2  -(d_1+d_2)\int_\Omega \nabla u\nabla |\nabla v|^2\nonumber\\
&\quad +\frac{n\alpha^2}{8d_2}\int_\Omega u^{3}+a\int_\Omega  |\nabla v|^{2}+d_2\int_{\partial \Omega}u\frac{\partial |\nabla v|^2}{\partial \nu}\label{u1-grdv21-f3}\end{align}
for all $t\in(0, T_{\text{max}})$.
Now, applying repeatedly Young's inequality with $\epsilon$ and taking into account \eqref{u-l2-gradu-l2:est}  and \eqref{dgradvdt-int3-c}, we infer that
\be\label{u-gradv-gradgradv-l2}
\chi \int_\Omega  u\nabla v\nabla|\nabla v|^2\leq\frac{ \chi^2}{4\epsilon_3} \int_\Omega  u^2|\nabla v|^2+\epsilon_3\int_\Omega|\nabla|\nabla v|^2|^2
\ee
as well as
\be\label{gradu-gradv-l2}
-(d_1+d_2)\int_\Omega \nabla u\nabla|\nabla v|^2\leq \frac{(d_1+d_2)^2}{4\epsilon_4}\int_\Omega |\nabla u|^2+\epsilon_4\int_\Omega|\nabla|\nabla v|^2|^2
\ee
for any $\epsilon_3,\epsilon_4>0$. Then we substitute  \eqref{u-gradv-gradgradv-l2} and \eqref{gradu-gradv-l2} into \eqref{u1-grdv21-f3} to conclude  \eqref{ugrav-l2-lem}.
\end{proof}

We are ready to study the coupled functional  \eqref{couple-ineq} by means of  Lemmas \ref{u-l2-gradu-l2-lem} -\ref{gradv-l4}.
\begin{lemma}\label{key-ineq-3d} Under the logistic condition \eqref{log-con},   the solution $(u,v)$ of \eqref{para-para} satisfies the inequality
\be\label{com-need-3d}
\begin{split}
&\frac{d}{dt}\Bigr\{\delta_1\int_\Omega u^2+\delta_2\int_\Omega u|\nabla v|^2+\delta_3\int_\Omega |\nabla v|^4\Bigr\}+(2\mu\delta_1-\frac{n\alpha^2}{8d_2}\delta_2)\int_\Omega u^3\\
&+[2(d_1-\epsilon_1)\delta_1-\frac{(d_1+d_2)^2}{4\epsilon_4} \delta_2]\int_\Omega |\nabla u|^2+4\beta\delta_3\int_\Omega |\nabla v|^4 \\
&\quad +[2(d_2-\epsilon_2)\delta_3-(\epsilon_3+\epsilon_4)\delta_2]\int_\Omega|\nabla |\nabla v|^2|^2+2\beta\delta_2\int_\Omega u|\nabla v|^2 \\
&\quad +[(\mu-\frac{\chi^2}{4\epsilon_3})\delta_2-2(\frac{n}{2d_2}+\frac{1}{\epsilon_2})\alpha^2\delta_3-\frac{\chi^2}{2\epsilon_1}\delta_1]\int_\Omega u^2|\nabla v|^2\\
&\leq 2a\delta_1\int_\Omega u+a\delta_2\int_\Omega |\nabla v|^2+2d_2\delta_3\int_{\partial \Omega} |\bigtriangledown v|^2\frac{\partial}{\partial \nu} |\bigtriangledown v|^2+d_2\delta_2\int_{\partial \Omega} u\frac{\partial}{\partial \nu} |\bigtriangledown v|^2
\end{split}
\ee
for any $t\in(0, T_{\text{max}})$ and any positive constants $\delta_1,\delta_2$ and $\delta_3$ and $\epsilon_1\in (0,d_1)$, $\epsilon_2\in (0, d_2) $ and $\epsilon_3, \epsilon_4>0$.
\end{lemma}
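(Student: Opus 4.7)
The strategy is almost purely bookkeeping: assemble the three already-established differential inequalities from Lemmas \ref{u-l2-gradu-l2-lem}, \ref{gradv-l4-lem} and \ref{gradv-l4}, weight them by the positive constants $\delta_1,\delta_3,\delta_2$ respectively, add, and regroup the resulting terms according to the integrand on which they act. No new PDE analysis is required.

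First, I would write \eqref{u-l2-gradu-l2:est} multiplied by $\delta_1$, which contributes $\delta_1\tfrac{d}{dt}\int u^2$, the good terms $2(d_1-\epsilon_1)\delta_1\int|\nabla u|^2$ and $2\mu\delta_1\int u^3$ on the left, and the cross term $\tfrac{\chi^2}{2\epsilon_1}\delta_1\int u^2|\nabla v|^2$ and the source $2a\delta_1\int u$ on the right. Next, \eqref{dgradvdt-int3-c} multiplied by $\delta_3$ contributes $\delta_3\tfrac{d}{dt}\int|\nabla v|^4$, the good terms $2(d_2-\epsilon_2)\delta_3\int|\nabla|\nabla v|^2|^2$ and $4\beta\delta_3\int|\nabla v|^4$ on the left, and the cross term $2(\tfrac{n}{2d_2}+\tfrac{1}{\epsilon_2})\alpha^2\delta_3\int u^2|\nabla v|^2$ plus the boundary term $2d_2\delta_3\int_{\partial\Omega}|\nabla v|^2\partial_\nu|\nabla v|^2$ on the right. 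Finally, \eqref{ugrav-l2-lem} multiplied by $\delta_2$ contributes $\delta_2\tfrac{d}{dt}\int u|\nabla v|^2$ on the left, a good term $(\mu-\tfrac{\chi^2}{4\epsilon_3})\delta_2\int u^2|\nabla v|^2$ and $2\beta\delta_2\int u|\nabla v|^2$ on the left, and the terms $\tfrac{(d_1+d_2)^2}{4\epsilon_4}\delta_2\int|\nabla u|^2$, $(\epsilon_3+\epsilon_4)\delta_2\int|\nabla|\nabla v|^2|^2$, $\tfrac{n\alpha^2}{8d_2}\delta_2\int u^3$, $a\delta_2\int|\nabla v|^2$ and the boundary term $d_2\delta_2\int_{\partial\Omega}u\,\partial_\nu|\nabla v|^2$ on the right.

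Summing the three weighted inequalities and moving the $|\nabla u|^2$, $|\nabla|\nabla v|^2|^2$, $u^2|\nabla v|^2$ and $u^3$ contributions to the left-hand side (where their coefficients now combine into the bracketed expressions in \eqref{com-need-3d}), I would obtain exactly the claimed inequality. Specifically, the $|\nabla u|^2$-coefficient becomes $2(d_1-\epsilon_1)\delta_1-\tfrac{(d_1+d_2)^2}{4\epsilon_4}\delta_2$; the $|\nabla|\nabla v|^2|^2$-coefficient becomes $2(d_2-\epsilon_2)\delta_3-(\epsilon_3+\epsilon_4)\delta_2$; the $u^3$-coefficient becomes $2\mu\delta_1-\tfrac{n\alpha^2}{8d_2}\delta_2$; and the $u^2|\nabla v|^2$-coefficient becomes $(\mu-\tfrac{\chi^2}{4\epsilon_3})\delta_2-2(\tfrac{n}{2d_2}+\tfrac{1}{\epsilon_2})\alpha^2\delta_3-\tfrac{\chi^2}{2\epsilon_1}\delta_1$. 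The remaining terms ($4\beta\delta_3\int|\nabla v|^4$, $2\beta\delta_2\int u|\nabla v|^2$, the source terms $2a\delta_1\int u$, $a\delta_2\int|\nabla v|^2$, and the two boundary integrals) are already in the form displayed on the right-hand side of \eqref{com-need-3d}.

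The only minor pitfall is sign/coefficient tracking (for instance, making sure to transfer the negative-sign cross terms on the right of the three base inequalities into the correctly bracketed expressions on the left); there is no genuine mathematical obstacle, since convexity or non-convexity of $\Omega$ does not intervene at this step — the boundary integrals are carried along unchanged and will be handled later. Hence the whole argument is a one-line linear combination followed by regrouping of integrands.
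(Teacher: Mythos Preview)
Your proposal is correct and matches the paper's own proof exactly: the paper simply states that ``by evident multiplications and additions from Lemmas \ref{u-l2-gradu-l2-lem}--\ref{gradv-l4}, one can readily derive the inequality \eqref{com-need-3d}.'' Your detailed coefficient tracking is precisely the bookkeeping the paper leaves implicit, and there is no additional idea involved.
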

\begin{proof}By evident multiplications and additions from  Lemmas \ref{u-l2-gradu-l2-lem} -\ref{gradv-l4}, one can readily derive the inequality \eqref{com-need-3d}.
\end{proof}
Motivated by \eqref{com-need-3d}, to find the possibly smallest lower bound $\mu_0$ for the damping rate $\mu$ that could be obtained using such method, we  wish to choose $\epsilon_i$ and $\delta_i$ to satisfy
\begin{equation}\label{mu-min-deltas-3d-general}
\left\{ \begin{array}{ll}
&2(d_1-\epsilon_1)\delta_1-\frac{(d_1+d_2)^2}{4\epsilon_4} \delta_2>0, \\[0.25cm]
&2\mu\delta_1-\frac{n\alpha^2}{8d_2}\delta_2> 0, \\[0.25cm]
&2(d_2-\epsilon_2)\delta_3-(\epsilon_3+\epsilon_4)\delta_2> 0, \\[0.25cm]
&(\mu-\frac{\chi^2}{4\epsilon_3})\delta_2-2(\frac{n}{2d_2}+\frac{1}{\epsilon_2})\alpha^2\delta_3-\frac{\chi^2}{2\epsilon_1}\delta_1\geq 0. \end{array}\right.
\end{equation}
This can be easily satisfied and $\delta_2$ may be taken to be one.  The minimizer of the minimization problem \eqref{mu-min-deltas-3d-general} in $\mu$ will give us the smallest damping rate $\mu_0$ we are seeking.  Our goal is then  to choose  $\epsilon_i$ and $\delta_i$  so that $\mu$  is minimized.    By eliminations from \eqref{mu-min-deltas-3d-general}, we end up with
\be\label{mu0-min-3d}
\left\{ \begin{array}{ll}
&\mu>\frac{n\alpha^2}{16d_2}\frac{\delta_2}{\delta_1}, \\[0.25cm]
&\mu>\frac{\chi^2}{4\epsilon_3}+2(\frac{n}{2d_2}+\frac{1}{\epsilon_2})\alpha^2\cdot\frac{\epsilon_3+\epsilon_4}{2(d_2-\epsilon_2)}+\frac{\chi^2}{2\epsilon_1}\cdot\frac{\frac{(d_1+d_2)^2}{4\epsilon_4}}{2(d_1-\epsilon_1)} \end{array}\right.
\ee
for any $\epsilon_1\in (0,d_1), \epsilon_2\in (0,d_2)$ and $\epsilon_3,\epsilon_4>0$. Next, we shall first minimize the second expression on the  right-hand side of \eqref{mu0-min-3d}.  Notice that
\be\label{mu0-min-3d-2}
\frac{\chi^2}{2\epsilon_1}\cdot\frac{\frac{(d_1+d_2)^2}{4\epsilon_4}}{2(d_1-\epsilon_1)}\geq \frac{\chi^2 }{d_1^2}\frac{(d_1+d_2)^2}{4\epsilon_4}
\ee
with equality if and only $\epsilon_1=d_1/2$, and
\be\label{mu0-min-3d-3}
2(\frac{n}{2d_2}+\frac{1}{\epsilon_2})\alpha^2\cdot\frac{\epsilon_3+\epsilon_4}{2(d_2-\epsilon_2)}\geq \Bigr[ \frac{n\alpha}{(\sqrt{2n+4}- 2)d_2}\Bigr]^2(\epsilon_3+\epsilon_4)
\ee
with equality if and only if
$$
 \epsilon_2=\frac{(\sqrt{2n+4}-2)}{n}d_2.
$$
Therefore,  by algebraic calculations  from \eqref{mu0-min-3d}-\eqref{mu0-min-3d-3}  we deduce that the second expression on the right-hand side of \eqref{mu0-min-3d} achieves its minimum
\be\label{mu0-3d-1}
\frac{n}{\sqrt{2n+4}-2}(\frac{1}{d_1}+\frac{2}{d_2})\alpha |\chi|
\ee
if and only if
\begin{equation}\label{epsilon-is}
\left\{ \begin{array}{ll}
&\epsilon_1=\frac{d_1}{2}, \quad \quad \quad \quad \quad  \quad \quad \epsilon_2=\frac{(\sqrt{2n+4}-2)}{n}d_2,  \\[0.25cm]
&\epsilon_3=\frac{(\sqrt{2n+4}-2)d_2}{2n\alpha}|\chi|,\quad\quad \epsilon_4= \frac{(\sqrt{2n+4}-2)(d_1+d_2)d_2}{2nd_1\alpha}|\chi|. \end{array}\right.
\end{equation}
For such well-chosen  $\epsilon_i$ according to \eqref{epsilon-is}, we  choose $\delta_i$ to fulfill the first two inequalities in \eqref{mu-min-deltas-3d-general}  as well as
$$
\frac{n\alpha^2}{16d_2}\frac{\delta_2}{\delta_1}<\frac{n}{\sqrt{2n+4}-2}(\frac{1}{d_1}+\frac{2}{d_2})\alpha |\chi|,
$$
then we infer  from \eqref{mu0-min-3d}-\eqref{mu0-3d-1} that
\be\label{mu0-min-final1}
\mu>\mu_0(n,d_1,d_2,\alpha,\chi):=\frac{n}{\sqrt{2n+4}-2}(\frac{1}{d_1}+\frac{2}{d_2})\alpha |\chi|.
\ee
Indeed,  for any $\mu$ satisfying \eqref{mu0-min-final1}, we fix $\epsilon_i$ according to \eqref{epsilon-is} and $\delta_i$ according to
\begin{equation}\label{delta-is}
\left\{ \begin{array}{ll}
&\delta_1=\max\Bigr\{\frac{(d_1+d_2)^2}{8\epsilon_4(d_1-\epsilon_1)}, \frac{\alpha^2}{16d_2}\frac{(\sqrt{2n+4}-2)}{(\frac{1}{d_1}+\frac{2}{d_2})\alpha |\chi|}\Bigr\}+1,  \\[0.25cm]
& \delta_2=1,\quad  \quad \quad  \quad  \delta_3=\frac{\epsilon_3+\epsilon_4}{2(d_2-\epsilon_2)}+1,  \end{array}\right.
\end{equation}
then the inequality \eqref{mu-min-deltas-3d-general} will be satisfied.

For any   $\mu>\mu_0$ as defined in \eqref{mu0-min-final1}, we shall illustrate that all integrals on the right-hand side of \eqref{com-need-3d} can be controlled by the dissipative terms on the left and, as a result,  yielding the assertion that $\|u(t)\|_{L^2}$ is uniformly bounded and thus  (i) of Theorem \ref{main thm} in physically relevant setting $n=3$ by the $L^{\frac{n}{2}+\epsilon}$-criterion in \cite{BBTW15,Xiangpre}.
\begin{lemma} \label{main-thm-3d}  Let $\Omega\subset \mathbb{R}^ n$ be a bounded smooth domain,  $f$ satisfy  the logistic condition \eqref{log-con} and $d_1,d_2,\alpha,\beta>0$, $a\geq 0$ and $\chi\in\mathbb{R}$. Then, for any  $\mu>\mu_0(n,d_1,d_2,\alpha,\chi)$ as given in \eqref{mu0-min-final1},  there exists a constant $C(u_0, v_0)$ such that the unique nonnegative solution $(u,v)$ of the chemotaxis-growth system \eqref{para-para} satisfies
$$
\int_\Omega u^2(\cdot, t)+\int_\Omega u(\cdot, t)|\nabla v(\cdot, t)|^2+\int_\Omega |\nabla v(\cdot, t)|^4\leq C(u_0, v_0), \quad \quad t\in (0, T_{\text{max}}).
$$
Thus, if $n\leq 3$, then the solution $(u,v)$ exists globally in time, i.e.,  $T_{\text{max}}=\infty$ and  $(u(\cdot,t), v(\cdot,t))$ is uniformly bounded in  $L^\infty(\Omega)\times W^{1,\infty}(\Omega)$ for all $t\in (0, \infty)$.
\end{lemma}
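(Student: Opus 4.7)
The plan is to instantiate Lemma \ref{key-ineq-3d} with the specific choice of $\epsilon_1,\epsilon_2,\epsilon_3,\epsilon_4$ in \eqref{epsilon-is} and of $\delta_1,\delta_2,\delta_3$ in \eqref{delta-is}, and to show that the resulting differential inequality for
$$
z(t):=\delta_1\int_\Omega u^2+\delta_2\int_\Omega u|\nabla v|^2+\delta_3\int_\Omega|\nabla v|^4
$$
is a Gronwall-type inequality $z'(t)+\epsilon z(t)\leq C$ with $\epsilon>0$ and a constant $C$ depending only on the data. Once this is done, $z$ is bounded on $(0,T_{\max})$, and boundedness of $\|u(\cdot,t)\|_{L^2}$, together with the standard $L^{n/2+\epsilon}$-boundedness criterion from \cite{BBTW15,Xiangpre}, upgrades to the $L^\infty$-bound of $u$; a final parabolic regularity argument on the $v$-equation produces the $W^{1,\infty}$-bound on $v$ and thus forces $T_{\max}=\infty$ by \eqref{T_m-cri}.

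First I would verify that with the $\epsilon_i$ from \eqref{epsilon-is} and $\delta_i$ from \eqref{delta-is}, all four coefficients in \eqref{mu-min-deltas-3d-general} are strictly positive whenever $\mu>\mu_0$; the computations around \eqref{mu0-min-3d}--\eqref{mu0-3d-1} were done precisely to guarantee this. Consequently, the left-hand side of \eqref{com-need-3d} contains dissipative multiples of $\int_\Omega u^3$, $\int_\Omega|\nabla u|^2$, $\int_\Omega|\nabla v|^4$, $\int_\Omega|\nabla|\nabla v|^2|^2$, $\int_\Omega u|\nabla v|^2$ and $\int_\Omega u^2|\nabla v|^2$ with positive constants that I keep track of explicitly.

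The next step is to dominate the right-hand side of \eqref{com-need-3d} by these dissipative terms plus a multiple of $z(t)$ plus a constant. The linear-in-$u$ mass term is bounded via Lemma \ref{ul1-vgradl2}, and $a\delta_2\int_\Omega|\nabla v|^2$ is likewise bounded by the same lemma; both are absorbed into the constant. The genuine issue is the two boundary integrals $\int_{\partial\Omega}|\nabla v|^2\frac{\partial|\nabla v|^2}{\partial\nu}$ and $\int_{\partial\Omega}u\frac{\partial|\nabla v|^2}{\partial\nu}$, which vanish only when $\Omega$ is convex via $\frac{\partial|\nabla v|^2}{\partial\nu}\leq 0$ on $\partial\Omega$. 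For the general smooth (possibly non-convex) case, I would invoke the boundary trace inequality of \cite{MS14,ISY14}, namely $\frac{\partial|\nabla v|^2}{\partial\nu}\leq 2\kappa|\nabla v|^2$ on $\partial\Omega$ with $\kappa$ an upper bound for the curvature, together with the trace-Sobolev estimate
$$
\bigl\||\nabla v|^2\bigr\|_{L^2(\partial\Omega)}^2\leq \theta\,\bigl\|\nabla|\nabla v|^2\bigr\|_{L^2(\Omega)}^2+C_\theta\bigl\||\nabla v|^2\bigr\|_{L^1(\Omega)}^2
$$
valid for arbitrarily small $\theta>0$; analogous reasoning controls $\int_{\partial\Omega}u|\nabla v|^2$ via interpolation between $\|u|\nabla v|^2\|_{W^{1,1}(\Omega)}$ and $L^1(\Omega)$. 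Choosing $\theta$ small compared with the dissipative coefficients of $\int_\Omega|\nabla|\nabla v|^2|^2$ and $\int_\Omega|\nabla u|^2$ absorbs the boundary contribution into the left-hand side, at the cost of a term proportional to $\int_\Omega|\nabla v|^2$ that is again bounded via Lemma \ref{ul1-vgradl2}.

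Once the right-hand side is reduced to $C+\epsilon z(t)$ form, I would generate the lower-order dissipation $-\epsilon z(t)$ by the elementary interpolations $\int_\Omega u^2\leq \theta\int_\Omega u^3+C_\theta\int_\Omega u$, $\int_\Omega u|\nabla v|^2\leq\int_\Omega u^2|\nabla v|^2/(4\mu)+\mu\int_\Omega |\nabla v|^2$ (or Young), and absorbing $\int_\Omega|\nabla v|^4$ directly by the dissipative $4\beta\delta_3\int_\Omega|\nabla v|^4$. This yields $z'(t)+\epsilon z(t)\leq C$, hence $z(t)\leq z(0)+C/\epsilon$ for all $t\in(0,T_{\max})$. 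I expect the main obstacle to be precisely the bookkeeping of the boundary terms in the non-convex setting, where the small parameter in the trace inequality must be chosen consistently with the already-fixed $\epsilon_i,\delta_i$; once this is tuned, the extension criterion \eqref{T_m-cri}, the Moser-type iteration of \cite{BBTW15,Xiangpre} applied with $p=2>n/2$ for $n\leq 3$, and standard semigroup estimates on $v_t=d_2\Delta v-\beta v+\alpha u$ close out the global-in-time uniform boundedness in $L^\infty(\Omega)\times W^{1,\infty}(\Omega)$.
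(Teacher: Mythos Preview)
Your proposal follows essentially the same route as the paper's proof: instantiate Lemma~\ref{key-ineq-3d} with the $\epsilon_i,\delta_i$ from \eqref{epsilon-is}--\eqref{delta-is}, control the boundary integrals via the pointwise estimate $\frac{\partial|\nabla v|^2}{\partial\nu}\leq K_1|\nabla v|^2$ from \cite{MS14} together with a trace inequality carrying a small parameter, absorb the remaining lower-order pieces using Lemma~\ref{ul1-vgradl2} and the $\int_\Omega u^3$ dissipation, and then close via the $L^{n/2+\epsilon}$-criterion of \cite{BBTW15,Xiangpre}.

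The one place where you should be more precise is the second boundary integral $d_2\delta_2\int_{\partial\Omega}u\,\frac{\partial|\nabla v|^2}{\partial\nu}$. Your suggestion to control $\int_{\partial\Omega}u|\nabla v|^2$ ``via interpolation between $\|u|\nabla v|^2\|_{W^{1,1}(\Omega)}$ and $L^1(\Omega)$'' is not quite right as stated: the $W^{1,1}\to L^1(\partial\Omega)$ trace does not come with a small parameter in front of the gradient, and expanding $\nabla(u|\nabla v|^2)$ produces cross terms that do not match the available dissipative quantities $\int_\Omega|\nabla u|^2$ and $\int_\Omega|\nabla|\nabla v|^2|^2$ with a small coefficient. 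The paper instead first applies Young's inequality \emph{on $\partial\Omega$} to split
\[
\int_{\partial\Omega}u|\nabla v|^2\;\leq\;\tfrac{1}{2}\int_{\partial\Omega}u^2+\tfrac{1}{2}\int_{\partial\Omega}|\nabla v|^4
= \tfrac{1}{2}\|u\|_{L^2(\partial\Omega)}^2+\tfrac{1}{2}\bigl\||\nabla v|^2\bigr\|_{L^2(\partial\Omega)}^2,
\]
and then uses the $L^2$-trace inequality with $\epsilon$ (your ``trace--Sobolev estimate'', in the sharpened form $\|w\|_{L^2(\partial\Omega)}\leq\epsilon\|\nabla w\|_{L^2(\Omega)}+C_\epsilon\|w\|_{L^1(\Omega)}$) on each piece separately, yielding $\epsilon\int_\Omega|\nabla u|^2+\epsilon\int_\Omega|\nabla|\nabla v|^2|^2$ plus terms bounded by Lemma~\ref{ul1-vgradl2}. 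With that adjustment your outline matches the paper's argument.
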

\begin{proof}
 It follows from  Lemma \ref{key-ineq-3d}  that the quantity
$$
z(t):=\delta_1\int_\Omega u^2+\delta_2\int_\Omega u|\nabla v|^2+\delta_3\int_\Omega |\nabla v|^4, \quad \quad t\in (0, T_{\text{max}}),
$$
fulfills
\be\label{differ-ineq-3d}
\begin{array}{ll}
&z^\prime (t)+\beta z(t)+[2(d_1-\epsilon_1)\delta_1-\frac{(d_1+d_2)^2}{4\epsilon_4}\delta_2]\int_\Omega |\nabla u|^2\\[0.25cm]
&\quad \quad +[2(d_2-\epsilon_2)\delta_3-(\epsilon_3+\epsilon_4)\delta_2]\int_\Omega|\nabla |\nabla v|^2|^2+3\beta\delta_3\int_\Omega |\nabla v|^4\\[0.25cm]
&\quad \quad +\beta\delta_2\int_\Omega u|\nabla v|^2+(2\mu\delta_1-\frac{n\alpha^2}{8d_2}\delta_2)\int_\Omega u^3\\[0.25cm]
&\quad\quad  \leq 2a\delta_1\int_\Omega u+a\delta_2\int_\Omega |\nabla v|^2+\beta\delta_1\int_\Omega u^2\\[0.25cm]
&\quad \quad \quad+2d_2\delta_3\int_{\partial \Omega} |\bigtriangledown v|^2\frac{\partial}{\partial \nu} |\bigtriangledown v|^2+d_2\delta_2\int_{\partial \Omega} u\frac{\partial}{\partial \nu} |\bigtriangledown v|^2 .
\end{array}
\ee
 In the sequel,  we bound  the integrals on the right-hand side of \eqref{differ-ineq-3d} in terms of  the dissipative terms on its left-hand side.

Let us first focus on controlling the boundary integrals in \eqref{differ-ineq-3d}. So far, there are a couple of existing ways to handle these boundary integrals,  see \cite{ISY14, LX14,  TWZAMP, ZL15-ZAMP}, for instance. Here we would like to provide an alternative and transparent way  to remove the technical assumption that the  domain $\Omega$ be convex.  The starting point is based on the  pointwise geometric inequality
\be\label{geometric-ineq}
\frac{\partial |\nabla w|^2}{\partial \nu}\leq K_1(\Omega)|\nabla w|^2 \quad \text{on } \partial \Omega,
\ee
which holds  for any bounded smooth  domain $\Omega\subset \mathbb{R}^n$ and  any $w$ satisfying $\frac{\partial w}{\partial \nu}=0$ on $\partial \Omega$, cf. \cite[Lemma 4.2]{MS14}. Here and below, $K_i$ will denote some inessential constants.  Notice also a user-friendly  version of trace inequality with $\epsilon$ (cf. \cite[Remark 52.9]{QSbook}): for any $\epsilon>0$, one has
\be\label{trace-ineq-epsilon0}
\|w\|_{L^2(\partial\Omega)}\leq \epsilon \|\nabla w\|_{L^2(\Omega)}+C_\epsilon \|w\|_{L^2(\Omega)}, \quad \forall w\in H^1(\Omega).
\ee
Indeed, this is immediately implied, upon a use of Young's inequality with $\epsilon$,  by the following version of trace inequality:
\be\label{trace-ineq-epsilon2}
\|w\|_{L^2(\partial\Omega)}\leq  K_2 \| w\|_{L^2(\Omega)}^{\frac{1}{2}} \|w\|_{H^1(\Omega)}^{\frac{1}{2}}, \quad \forall w\in H^1(\Omega).
\ee
As a matter of fact, by the property of trace inequality (the trace operator $T$ maps $H^{\frac{1}{2}}(\Omega)$ continuously  onto $L^2(\partial\Omega)$),  one has
$$
\|w\|_{L^2(\partial \Omega)}\leq K_3\|w\|_{H^{\frac{1}{2}}(\Omega)}, \quad \forall w\in H^{\frac{1}{2}}(\Omega).
$$
On the other hand, it follows from the fact that $H^{\frac{1}{2}}$ interpolates the spaces $H^0=L^2$ and $H^1$ that
$$
\|w\|_{H^{\frac{1}{2}}(\Omega)}\leq K_4\|w\|_{L^2(\Omega)}^{\frac{1}{2}}\|w\|_{H^1(\Omega)}^{\frac{1}{2}}, \quad \forall w\in H^1(\Omega).
$$
Collecting these two estimates directly  leads to \eqref{trace-ineq-epsilon2}.

Since $H^1(\Omega)$ is compactly embedded in $L^2(\Omega)$ by Kondrachov and that $L^2(\Omega)$ is continuously embedded in $L^1(\Omega)$, Lion's lemma says,  for any $\eta>0$, that
$$
\|w\|_{L^2(\Omega)}\leq \eta \|\nabla w\|_{L^2(\Omega)}+\eta\|w\|_{L^2(\Omega)}+C_\eta\|w\|_{L^1(\Omega)},  \quad \forall w\in H^1(\Omega).
$$
Combing this with \eqref{trace-ineq-epsilon0}, we conclude, for any $\epsilon>0$,
\be\label{trace-ineq-epsilon}
\|w\|_{L^2(\partial\Omega)}\leq \epsilon \|\nabla w\|_{L^2(\Omega)}+C_\epsilon \|w\|_{L^1(\Omega)}, \quad \forall w\in H^1(\Omega).
\ee
This gives another elementary  proof for the equivalent  trace inequality stated in \cite[P. 13, line -4]{TWZAMP}.

Now, with \eqref{geometric-ineq} and  \eqref{trace-ineq-epsilon} at hand, we can cope with  the boundary integrals in \eqref{differ-ineq-3d} as follows: for any $\epsilon>0$,
\be\label{boundary-integral-3d}
\begin{array}{ll}
&2d_2\delta_2\int_{\partial \Omega} |\bigtriangledown v|^2\frac{\partial}{\partial \nu} |\bigtriangledown v|^2+d_2\delta_3\int_{\partial \Omega} u\frac{\partial}{\partial \nu} |\bigtriangledown v|^2\\[0.25cm]
& \leq 2d_2\delta_2 K_1\int_{\partial \Omega} |\bigtriangledown v|^4+d_2\delta_3K_1\int_{\partial \Omega} u |\bigtriangledown v|^2\\[0.25cm]
&\leq K_5\int_{\partial \Omega} |\bigtriangledown v|^4+K_6\int_{\partial \Omega} u^2=K_5\||\nabla v|^2\|_{L^2(\partial \Omega)}^2+K_6\|u\|_{L^2(\partial \Omega)}^2\\[0.25cm]
&\leq \epsilon\int_\Omega |\nabla|\nabla v|^2|^2+C_ \epsilon  \int_{\Omega} |\bigtriangledown v|^2+ \epsilon  \int_\Omega |\nabla u|^2+C_ \epsilon \int_{\Omega} u.
\end{array}
\ee
For any  $\mu>\mu_0(n,d_1,d_2,\alpha,\chi)$ as given in \eqref{mu0-min-final1}, we first fix $\epsilon_i, i=1,2,3,4$ according to \eqref{epsilon-is}, and then  fix  $\delta_1,\delta_2$  and $\delta_3$ complying with \eqref{delta-is}. Then the inequality \eqref{mu-min-deltas-3d-general} is satisfied.  Upon such selections,  we then fix $\epsilon$  according to
$$
\epsilon=\frac{1}{2}\min\Bigr\{2(d_1-\epsilon_1)\delta_1-\frac{(d_1+d_2)^2}{4\epsilon_4}\delta_3,\quad 2(d_2-\epsilon_2)\delta_3-(\epsilon_3+\epsilon_4)\delta_2\Bigr\}.
$$
Then the boundary integrals in \eqref{boundary-integral-3d}  will be absorbed by the terms on the  left-hand side of \eqref{differ-ineq-3d} and Lemma \ref{ul1-vgradl2}.

Next, notice that
\be\label{u3-dominate-u2}
 \beta\delta_1\int_\Omega u^2\leq (2\mu\delta_1-\frac{n\alpha^2}{8d_2}\delta_2)\int_\Omega u^3+K_7|\Omega|,
\ee
where
$$
K_7=\max\{\beta\delta_1u^2-(2\mu\delta_1-\frac{n\alpha^2}{8d_2}\delta_2) u^3|u\geq 0\}<\infty.
$$

Finally,  we substitute \eqref{boundary-integral-3d}  and \eqref{u3-dominate-u2} into \eqref{differ-ineq-3d} and use Lemma \ref{ul1-vgradl2} to conclude
 $$
z^\prime (t)+\beta z(t)\leq C(u_0,v_0),  \quad \quad t\in [0, T_{\text{max}}),
$$
which together the definition of $z$ simply leads to
$$
\ba{ll}
z(t)&=\delta_1\int_\Omega u^2+\delta_2\int_\Omega u|\nabla v|^2+\delta_3\int_\Omega |\nabla v|^4\\[0.25cm]
&\leq \delta_1\int_\Omega u_0^2+\delta_2\int_\Omega u_0|\nabla v_0|^2+\delta_3\int_\Omega |\nabla v_0|^4
+\frac{C(u_0,v_0)}{\beta},  \quad t\in [0, T_{\text{max}}).
\ea
$$
This in particular shows the uniform spatial  $L^2$-boundedness of $u(\cdot, t)$ for any $t\in [0, T_{\mbox{max}})$. Hence, by Moser iteration, cf.  the   $L^{\frac{n}{2}+\epsilon}$-criterion in \cite[R1, Theorem 1.1]{Xiangpre}  or \cite[Lemma 3.2]{BBTW15} with $n=3$, we infer that $T_{\text{max}}=\infty$ and that $(u(\cdot,t), v(\cdot,t))$ is uniformly bounded in $L^\infty(\Omega)\times W^{1,\infty}(\Omega)$ for all $t\in (0, \infty)$.
\end{proof}

\section{How strong a logistic damping can prevent blowups in $n$-D}
\subsection{Blow-up prevention by logistic source in nonconvex domains}

\ \ In this subsection,  based on the detailed algorithm in $3$-D, we wish to provide a clue on how to compute the explicit logistic damping rate $\mu_0$ that suppresses blow-up whenever $\mu>\mu_0$ in $\geq4$-D. We will mainly do it for $n=4, 5$. Our procedure suggests that an  explicit logistic damping rate $\mu_0$ suppressing blow-up whenever $\mu>\mu_0$, which enjoys the properties as described in Remark \ref{note on main thm},  is also available in more higher dimensions.

When $n\leq 5$, it amounts to ensuring  the uniform boundedness of $\|u\|_{L^3(\Omega)}$. In such setup,  the core analysis then lies in deriving a subtle estimate for
\be\label{couple-ineq-4d}
z(t):=\delta_1\int_\Omega u^3+\delta_2\int_\Omega u^2|\nabla v|^2+\delta_3\int_\Omega u|\nabla v|^4+\delta_4\int_\Omega |\nabla v|^6, \quad t\in(0, T_{\text{max}})
\ee
of the form
\be\label{gronwall-ineq-4d}
z^\prime(t)+\zeta z(t)\leq C_\zeta , \quad  t\in(0, T_{\text{max}})
\ee
for some well chosen positive constants $\delta_i$ and $\zeta >0$ and perhaps large $C_\zeta $ independent of $t$.  Once this is done, the $L^3$-and hence the $L^\infty$-boundedness  of $u$ will be obtained, by the boundedness principles \cite{BBTW15, Xiangpre}.

Upon trivial applications of Lemma \ref{u-lp-lem-optimal} with $p=3$ and Lemma \ref{gradv-2q-lem-optimal} with $q=3$, we get the following two lemmas.
\begin{lemma}  \label{u-l3-gradu-l3-lem-45d} Let $f$ satisfy the logistic condition \eqref{log-con}. Then  the solution $(u,v)$ of \eqref{para-para} satisfies
\begin{equation}\label{u-l3-gradu-l3:est}
\frac{d}{dt}\int_\Omega u^3+6(d_1-\epsilon)\int_\Omega u|\nabla u|^2 +3\mu\int_\Omega u^4\leq \frac{3\chi^2}{2\epsilon}\int_\Omega u^3|\nabla v|^2+3a\int_\Omega u^2
\end{equation}
for all $t\in(0, T_{\text{max}})$ and for any $\epsilon\in(0, d_1)$.
\end{lemma}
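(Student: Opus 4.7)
The plan is to observe that this lemma is simply the $p=3$ specialization of Lemma 2.4 (Lemma \ref{u-lp-lem-optimal}), which has already been proven in full generality. Applying Lemma \ref{u-lp-lem-optimal} with $p=3$ yields
\begin{equation*}
\frac{1}{3}\frac{d}{dt}\int_\Omega u^3+2(d_1-\epsilon)\int_\Omega u|\nabla u|^2+\mu\int_\Omega u^4\leq \frac{\chi^2}{2\epsilon}\int_\Omega u^3|\nabla v|^2+a\int_\Omega u^2,
\end{equation*}
and multiplying through by $3$ gives precisely the asserted inequality \eqref{u-l3-gradu-l3:est}. So operationally, there is nothing to do beyond quoting the preceding lemma at a particular exponent.

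If one were to prove this directly without appealing to Lemma \ref{u-lp-lem-optimal}, the route would be the same as in that lemma's proof: multiply the $u$-equation in \eqref{para-para} by $u^{2}$, integrate over $\Omega$, and use the no-flux boundary condition to pass the derivatives onto $u$ and $v$. This produces a good dissipative term $2d_1\int u|\nabla u|^2$, a cross term $2\chi\int u^2 \nabla u \cdot \nabla v$ from the chemotaxis flux, and a source contribution $\int u^2 f(u)$. The cross term is split via Young's inequality with $\epsilon$ into $\epsilon \cdot 2\int u|\nabla u|^2 + \tfrac{\chi^2}{2\epsilon}\int u^3|\nabla v|^2$, and the requirement $\epsilon\in(0,d_1)$ is exactly what is needed so that the extracted $\epsilon$-term can be absorbed into the diffusion, leaving the coefficient $2(d_1-\epsilon)$ as stated. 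The source is bounded using \eqref{log-con}: $\int u^2 f(u) \le a\int u^2-\mu\int u^4$, which produces the two remaining terms after multiplying by $3$.

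There is no genuine obstacle here; the ``hardest'' choice is already baked into the parameterization, namely leaving $\epsilon$ free in $(0,d_1)$. The flexibility this provides is essential for later use in Section 4: in the functional combined with $\int u^2|\nabla v|^2$, $\int u|\nabla v|^4$, and $\int |\nabla v|^6$, the free parameter $\epsilon$ will be chosen (together with analogous free parameters in subsequent lemmas) to minimize the resulting lower bound $\mu_0$, in complete analogy with the optimization carried out in \eqref{mu-min-deltas-3d-general}--\eqref{epsilon-is}.
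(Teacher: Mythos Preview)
Your proposal is correct and matches the paper's approach exactly: the paper states that Lemmas \ref{u-l3-gradu-l3-lem-45d} and \ref{gradv-l6-lem-45d} follow ``upon trivial applications of Lemma \ref{u-lp-lem-optimal} with $p=3$ and Lemma \ref{gradv-2q-lem-optimal} with $q=3$,'' which is precisely the specialization you describe. Your additional remarks on the direct route and the role of the free parameter $\epsilon$ are accurate and consistent with the paper's later optimization strategy.
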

\begin{lemma}  \label{gradv-l6-lem-45d} The solution $(u,v)$ of \eqref{para-para} satisfies
\begin{align}\label{v-grdv-23}
\frac{d}{dt} \int_\Omega |\nabla v|^{6}+&6(d_2-\eta)\int_\Omega  |\nabla v|^{2}|\nabla|\nabla v|^2|^2+6\beta\int_\Omega |\nabla v|^{6}\nonumber\\
&\leq 3(\frac{ 2\alpha^2}{\eta}+\frac{n\alpha^2}{2d_2})\int_\Omega u^2 |\nabla v|^{4}+3d_2\int_{\partial\Omega}  |\nabla v|^{4}\frac{\partial}{\partial \nu} |\nabla v|^2 \end{align}
for all $t\in(0, T_{\text{max}})$ and for any $\epsilon_2\in(0, d_2)$.
\end{lemma}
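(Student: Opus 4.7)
The plan is very short: the desired inequality \eqref{v-grdv-23} is nothing but the specialization of Lemma \ref{gradv-2q-lem-optimal} to the exponent $q=3$, followed by multiplication of both sides by $q=3$. Since the general $q$-version has already been established (via testing the $v$-equation against $|\nabla v|^{2(q-1)}\Delta v$-type quantities, using the pointwise identity \eqref{point-wise-id}, the Cauchy--Schwarz-type bound \eqref{C-S-ineq-c} on $|\Delta v|^2$, and Young's inequality with $\epsilon$ to absorb $2d_2\int_\Omega|\nabla v|^{2(q-1)}|D^2 v|^2$), no fresh computation is needed here.

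Concretely, I would just record the substitution. Plugging $q=3$ into the inequality of Lemma \ref{gradv-2q-lem-optimal} gives
\begin{align*}
\frac{1}{3}\frac{d}{dt}\int_\Omega|\nabla v|^{6} &+ 2(d_2-\eta)\int_\Omega |\nabla v|^{2}\bigl|\nabla|\nabla v|^2\bigr|^2 + 2\beta\int_\Omega |\nabla v|^{6} \\
&\leq \Bigl[\frac{2\alpha^2}{\eta}+\frac{n\alpha^2}{2d_2}\Bigr]\int_\Omega u^{2}|\nabla v|^{4} + d_2\int_{\partial\Omega}|\nabla v|^{4}\frac{\partial}{\partial\nu}|\nabla v|^2,
\end{align*}
since $(q-1)(d_2-\eta)=2(d_2-\eta)$, $(q-1)\alpha^2/\eta=2\alpha^2/\eta$, and $|\nabla v|^{2(q-2)}=|\nabla v|^{2}$ when $q=3$. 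Multiplying through by $3$ reproduces \eqref{v-grdv-23} with the exact constants claimed in the statement, and this completes the derivation. The hypothesis $\eta\in(0,d_2)$ is inherited directly from Lemma \ref{gradv-2q-lem-optimal}.

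There is no real obstacle here; the only care required is bookkeeping to confirm that each coefficient in the specialization agrees with the target inequality, and to note that the logistic hypothesis \eqref{log-con} plays no role (the $v$-equation is linear in $v$ with source $\alpha u$, so neither $f$ nor $\mu$ appears in this estimate). The result is thus simply the case $q=3$ of the already-proven Lemma \ref{gradv-2q-lem-optimal}, recorded separately because it is the exact form needed in the sequel to estimate the coupled functional \eqref{couple-ineq-4d} in the 4-D and 5-D settings.
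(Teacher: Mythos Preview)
Your proposal is correct and matches the paper's approach exactly: the paper itself introduces this lemma with the remark that it follows upon a ``trivial application of Lemma \ref{gradv-2q-lem-optimal} with $q=3$,'' and your substitution and multiplication by $3$ reproduce the claimed constants verbatim. Your observation that the logistic hypothesis \eqref{log-con} is irrelevant here, and that the parameter should read $\eta\in(0,d_2)$ rather than $\epsilon_2$, is also accurate.
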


\begin{lemma}  \label{key-lem-optimal p=1} Let $f$ satisfy the logistic condition \eqref{log-con}. Then the solution $(u,v)$ of \eqref{para-para} verifies
\begin{align}
\frac{d}{dt} \int_\Omega u|\nabla v|^{4}&+4\beta \int_\Omega u |\nabla v|^{4}\nonumber\\
&+(\mu-\frac{\chi^2}{2\epsilon_1}) \int_\Omega u^2 |\nabla v|^{4}+2(d_2-\eta)\int_\Omega u|\nabla |\nabla v|^2|^2\nonumber\\
&\leq 2(\epsilon_1+\epsilon_2)\int_\Omega |\nabla v|^{2}|\nabla |\nabla v|^2|^2+\frac{(d_1+d_2)^2}{2\epsilon_2}\int_\Omega |\nabla u|^2|\nabla v|^2\nonumber\\
&\quad  +\frac{\alpha^2}{2}(\frac{1}{\eta}+\frac{n}{2d_2})\int_\Omega u^3|\nabla v|^{2}\nonumber\\
 &\quad +a\int_\Omega |\nabla v|^{4}+2d_2\int_{\partial \Omega}u|\nabla v|^{2}\frac{\partial |\nabla v|^2}{\partial \nu}\label{u1-grdv22-f}\end{align}
for all $t\in(0, T_{\text{max}})$ and for any $\epsilon\in (0, d_1)$,  $\eta\in(0, d_2)$ and $\epsilon_1, \epsilon_2>0$.
\end{lemma}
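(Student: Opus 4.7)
The plan is to derive this estimate as a direct specialization of the master identity in Lemma \ref{key-lem-optimal}, followed by two applications of Young's inequality with $\epsilon$ to absorb the two cross terms that survive after setting $p=1$.

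First, I would substitute $p=1$ and $q=2$ into \eqref{up-grdv2q-f}. The $(p-1)$-factor kills the $\int u^{p-2}|\nabla u|^2|\nabla v|^{2q}$ term and the $\chi^2(p-1)p/(4\epsilon)$ term outright, so one is left with
\begin{align*}
\frac{d}{dt} \int_\Omega u|\nabla v|^{4}&+4\beta \int_\Omega u |\nabla v|^{4}+2(d_2-\eta)\int_\Omega u |\nabla |\nabla v|^2|^2+\mu\int_\Omega u^{2} |\nabla v|^{4}\\
&\leq 2\chi\int_\Omega u|\nabla v|^{2}\nabla v\cdot\nabla |\nabla v|^2-2(d_1+d_2)\int_\Omega |\nabla v|^{2}\nabla u\cdot\nabla |\nabla v|^2\\
&\quad +\frac{\alpha^2}{2}\Bigl(\frac{1}{\eta}+\frac{n}{2d_2}\Bigr)\int_\Omega u^{3}|\nabla v|^{2}+a\int_\Omega |\nabla v|^{4}+2d_2\int_{\partial \Omega}u|\nabla v|^{2}\frac{\partial |\nabla v|^2}{\partial \nu}.
\end{align*}

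Second, I would estimate the two sign-indefinite volume integrals on the right by Young's inequality (Lemma \ref{Y-epsilon}). For the chemotaxis-type term, the pointwise bound $|\nabla v \cdot \nabla |\nabla v|^2| \leq |\nabla v|\,|\nabla|\nabla v|^2|$ plus Young gives, for any $\epsilon_1>0$,
\begin{equation*}
2\chi\int_\Omega u|\nabla v|^{2}\nabla v\cdot \nabla |\nabla v|^2 \leq 2\epsilon_1\int_\Omega |\nabla v|^{2}|\nabla |\nabla v|^2|^2+\frac{\chi^2}{2\epsilon_1}\int_\Omega u^{2}|\nabla v|^4,
\end{equation*}
and, analogously, for any $\epsilon_2>0$,
\begin{equation*}
-2(d_1+d_2)\int_\Omega |\nabla v|^{2}\nabla u\cdot \nabla |\nabla v|^2 \leq 2\epsilon_2\int_\Omega |\nabla v|^{2}|\nabla |\nabla v|^2|^2+\frac{(d_1+d_2)^2}{2\epsilon_2}\int_\Omega |\nabla u|^{2}|\nabla v|^2.
\end{equation*}

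Finally, I would substitute these two estimates into the previous display and transfer the term $\frac{\chi^2}{2\epsilon_1}\int_\Omega u^2|\nabla v|^4$ to the left-hand side, where it merges with $\mu\int_\Omega u^2|\nabla v|^4$ to produce the coefficient $(\mu-\frac{\chi^2}{2\epsilon_1})$. After collecting the two $|\nabla v|^2|\nabla|\nabla v|^2|^2$ contributions into a single term $2(\epsilon_1+\epsilon_2)\int_\Omega |\nabla v|^2|\nabla|\nabla v|^2|^2$, one recovers exactly \eqref{u1-grdv22-f}. No genuine obstacle is foreseen: the only delicate choice is selecting the two separate Young parameters $\epsilon_1,\epsilon_2>0$ (rather than a single $\epsilon$) so that, later when this lemma is combined with Lemmas \ref{u-l3-gradu-l3-lem-45d} and \ref{gradv-l6-lem-45d} in the $4$- and $5$-dimensional arguments, one retains enough flexibility to balance the $\int u^2|\nabla v|^4$, $\int |\nabla v|^2|\nabla|\nabla v|^2|^2$ and $\int |\nabla u|^2|\nabla v|^2$ dissipations against the cross terms produced by the coupling; these choices will be optimized in the forthcoming minimization procedure analogous to \eqref{mu-min-deltas-3d-general}.
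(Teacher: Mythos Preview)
Your proposal is correct and follows essentially the same route as the paper: specialize Lemma~\ref{key-lem-optimal} to $p=1$, $q=2$ to obtain the intermediate inequality, then apply Young's inequality with parameters $\epsilon_1,\epsilon_2>0$ to the two cross terms $2\chi\int_\Omega u|\nabla v|^2\nabla v\cdot\nabla|\nabla v|^2$ and $-2(d_1+d_2)\int_\Omega |\nabla v|^2\nabla u\cdot\nabla|\nabla v|^2$, and collect terms. Your explicit mention of the pointwise Cauchy--Schwarz step $|\nabla v\cdot\nabla|\nabla v|^2|\leq |\nabla v|\,|\nabla|\nabla v|^2|$ is a helpful clarification of what is implicit in the paper's Young estimates.
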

\begin{proof} Lemma \ref{key-lem-optimal} with  $p=1$ and $q=2$ reads as
\begin{align}
\frac{d}{dt} \int_\Omega u|\nabla v|^{4}&+4\beta \int_\Omega u|\nabla v|^{4}\nonumber\\
&+2(d_2-\eta)\int_\Omega u|\nabla |\nabla v|^2|^2+\mu \int_\Omega u^{2} |\nabla v|^{4}\nonumber\\
&\leq2\chi \int_\Omega u|\nabla v|^{2}\nabla v\nabla |\nabla v|^2 \nonumber\\
& \quad -2(d_1+d_2)\int_\Omega |\nabla v|^{2}\nabla u\nabla |\nabla v|^2\nonumber\\
&\quad +\frac{\alpha^2}{2}(\frac{1}{\eta}+\frac{n}{2d_2})\int_\Omega u^{3}|\nabla v|^{2}\nonumber\\
 &\quad +a\int_\Omega |\nabla v|^{4}+2d_2\int_{\partial \Omega}u|\nabla v|^{2}\frac{\partial |\nabla v|^2}{\partial \nu}.\label{u1-grdv22}\end{align}
Taking into account \eqref{v-grdv-23} of Lemma \ref{gradv-l6-lem-45d},  we estimate
\be\label{p=1 q=2:1}
2\chi \int_\Omega u|\nabla v|^{2}\nabla v\nabla |\nabla v|^2\leq 2\epsilon_1\int_\Omega |\nabla v|^{2}|\nabla |\nabla v|^2|^2+\frac{\chi^2}{2\epsilon_1}\int_\Omega u^2|\nabla v|^{4}
\ee
and
\be\label{p=1 q=2:2}
\begin{split}
&-2(d_1+d_2)\int_\Omega |\nabla v|^{2}\nabla u\nabla |\nabla v|^2\\
&\ \  \leq 2\epsilon_2\int_\Omega |\nabla v|^{2}|\nabla |\nabla v|^2|^2+\frac{(d_1+d_2)^2}{2\epsilon_2}\int_\Omega |\nabla u|^2|\nabla v|^2.
\end{split}
\ee
Then combining  \eqref{u1-grdv22}, \eqref{p=1 q=2:1} and \eqref{p=1 q=2:2}, we get the desired inequality \eqref{u1-grdv22-f}.
\end{proof}
\begin{lemma}  \label{key-lem-optimal p=2 q=1} Let $f$ satisfy the logistic condition \eqref{log-con}. Then  the solution $(u,v)$ of \eqref{para-para} verifies
\begin{align}
\frac{d}{dt} \int_\Omega u^2|\nabla v|^{2}&+2(d_1-\epsilon)\int_\Omega |\nabla u|^2|\nabla v|^{2}+2\beta \int_\Omega u^2 |\nabla v|^{2}\nonumber\\
&+(2\mu-\frac{\chi^2}{2\epsilon_3}) \int_\Omega u^{3} |\nabla v|^{2}\nonumber\\
&\leq\frac{\chi^2 }{2\epsilon}\int_\Omega u^2|\nabla v|^{4}+2\epsilon_4\int_\Omega u |\nabla u|^2 \nonumber\\
& \quad +\Bigr[2\epsilon_3+\frac{(d_1+d_2)^2}{2\epsilon_4}\Bigr]\int_\Omega u|\nabla |\nabla v|^2|^2\nonumber\\
 &\quad +\frac{n\alpha^2}{18d_2}\int_\Omega u^4+2a\int_\Omega u |\nabla v|^{2}+d_2\int_{\partial \Omega}u^2\frac{\partial |\nabla v|^2}{\partial \nu}\label{u2-grdv21-f}\end{align}
for all $t\in(0, T_{\text{max}})$ and for any $\epsilon\in (0, d_1)$,  $\eta\in(0, d_2)$ and $\epsilon_3, \epsilon_4>0$.
\end{lemma}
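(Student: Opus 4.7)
The plan is to apply the general key inequality \eqref{up-grdv2q-f} of Lemma \ref{key-lem-optimal} at the specific exponents $p=2$ and $q=1$, and then estimate the two residual mixed gradient terms by Young's inequality with $\epsilon$ (Lemma \ref{Y-epsilon}) in a carefully weighted way. The key design choice is that in both Young splittings I will peel off a factor of $u^{1/2}$, so that the remainders on the right-hand side involve $u|\nabla u|^2$ and $u|\nabla |\nabla v|^2|^2$ rather than the unweighted dissipative quantities; this is the structural pairing that matches the RHS of \eqref{u2-grdv21-f} and will later allow the coupled functional \eqref{couple-ineq-4d} to close.

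Specializing \eqref{up-grdv2q-f} with $p=2$, $q=1$ is essentially a substitution: the $(q-1)$ coefficient kills both the dissipative term $(q-1)(d_2-\eta)q\int u^p|\nabla v|^{2(q-2)}|\nabla|\nabla v|^2|^2$ and the $\frac{(q-1)}{\eta}$ contribution inside the coefficient of $\int u^{p+2}|\nabla v|^{2(q-1)}$, leaving only $\frac{q\alpha^2}{(p+1)^2}\cdot\frac{n}{2d_2}\int u^4=\frac{n\alpha^2}{18d_2}\int u^4$. What survives besides the target dissipative terms and the already-acceptable right-hand side are the two cross terms
$$
2\chi\int_\Omega u^2\nabla v\cdot\nabla |\nabla v|^2\qquad\text{and}\qquad -2(d_1+d_2)\int_\Omega u\,\nabla u\cdot\nabla|\nabla v|^2.
$$

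For the chemotactic cross term, I would apply Young's inequality with weight $\epsilon_3$ to the pair $\bigl(u^{3/2}|\nabla v|,\ u^{1/2}|\nabla|\nabla v|^2|\bigr)$, obtaining
$$
2\chi\int_\Omega u^2\nabla v\cdot\nabla|\nabla v|^2\le 2\epsilon_3\int_\Omega u|\nabla|\nabla v|^2|^2+\frac{\chi^2}{2\epsilon_3}\int_\Omega u^3|\nabla v|^2.
$$
Analogously, for the diffusive cross term, apply Young's inequality with weight $\epsilon_4$ to $\bigl(u^{1/2}|\nabla u|,\ u^{1/2}|\nabla|\nabla v|^2|\bigr)$ to deduce
$$
-2(d_1+d_2)\int_\Omega u\,\nabla u\cdot\nabla|\nabla v|^2\le 2\epsilon_4\int_\Omega u|\nabla u|^2+\frac{(d_1+d_2)^2}{2\epsilon_4}\int_\Omega u|\nabla|\nabla v|^2|^2.
$$
Substituting both into the $(p,q)=(2,1)$ form and moving the term $\frac{\chi^2}{2\epsilon_3}\int u^3|\nabla v|^2$ across to combine with $2\mu\int u^3|\nabla v|^2$ on the left yields exactly \eqref{u2-grdv21-f}.

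There is no real obstacle; the argument is a direct parallel to the proof of Lemma \ref{key-lem-optimal p=1}. The only point demanding foresight is the asymmetric weighting in the two Young splittings above: keeping one $u^{1/2}$ on each side is what produces the specific RHS structure $2\epsilon_4\int u|\nabla u|^2$ and $[2\epsilon_3+\tfrac{(d_1+d_2)^2}{2\epsilon_4}]\int u|\nabla|\nabla v|^2|^2$, whose coefficients are precisely the ones that will make the coupled functional inequality closable together with Lemmas \ref{u-l3-gradu-l3-lem-45d}, \ref{gradv-l6-lem-45d} and \ref{key-lem-optimal p=1} through a minimization problem analogous to \eqref{mu-min-deltas-3d-general}.
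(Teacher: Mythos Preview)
Your proposal is correct and follows essentially the same approach as the paper: specialize \eqref{up-grdv2q-f} to $p=2$, $q=1$, then apply Young's inequality with the $u^{1/2}$-weighted splittings to the two cross terms, exactly matching the paper's estimates \eqref{p=2 q=1:1} and \eqref{p=2 q=1:2}. Your remark about why one peels off $u^{1/2}$ in each Young splitting is a helpful piece of motivation that the paper leaves implicit.
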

\begin{proof}It follows from  \eqref{up-grdv2q-f} with $p=2$ and $q=1$ in Lemma \ref{key-lem-optimal} that
\begin{align}
\frac{d}{dt} \int_\Omega u^2|\nabla v|^{2}&+2(d_1-\epsilon)\int_\Omega |\nabla u|^2|\nabla v|^{2}+2\beta \int_\Omega u^2|\nabla v|^{2}\nonumber\\
&+2\mu \int_\Omega u^{3} |\nabla v|^{2}\nonumber\\
&\leq\frac{\chi^2 }{2\epsilon}\int_\Omega u^2|\nabla v|^{4}+2\chi \int_\Omega u^2\nabla v\nabla |\nabla v|^2 \nonumber\\
& \quad -2(d_1+d_2)\int_\Omega u\nabla u\nabla |\nabla v|^2+\frac{\alpha^2}{9}\frac{n}{2d_2}\int_\Omega u^{4}\nonumber\\
 &\quad +2a\int_\Omega u |\nabla v|^{2}+d_2\int_{\partial \Omega}u^2\frac{\partial |\nabla v|^2}{\partial \nu}.\label{u2-grdv21}\end{align}
Taking into consideration \eqref{u1-grdv22-f} and \eqref{u-l3-gradu-l3:est}, we bound
\be\label{p=2 q=1:1}
2\chi \int_\Omega u^{2}\nabla v\nabla |\nabla v|^2\leq 2\epsilon_3\int_\Omega u|\nabla |\nabla v|^2|^2+\frac{\chi^2}{2\epsilon_3} \int_\Omega u^{3} |\nabla v|^{2}
\ee
and
\be\label{p=2 q=1:2}
-2(d_1+d_2)\int_\Omega u\nabla u\nabla |\nabla v|^2\leq 2\epsilon_4\int_\Omega u|\nabla u|^2+\frac{(d_1+d_2)^2}{2\epsilon_4}\int_\Omega u|\nabla |\nabla v|^2|^2.
\ee
Then using \eqref{u2-grdv21}, \eqref{p=2 q=1:1} and \eqref{p=2 q=1:2}, we deduce \eqref{u2-grdv21-f} and hence  the lemma.
\end{proof}

At this moment,  we are well-prepared to estimate the coupled functional  \eqref{couple-ineq-4d} by means of  Lemmas \ref{u-l3-gradu-l3-lem-45d} -\ref{key-lem-optimal p=2 q=1}.
\begin{lemma}\label{key-ineq--45d} Under the logistic condition \eqref{log-con}, the solution $(u,v)$ of \eqref{para-para} satisfies the inequality
\be\label{com-need-4d}
\begin{split}
&\frac{d}{dt}\Bigr\{\delta_1\int_\Omega u^3+\delta_2\int_\Omega u^2|\nabla v|^2+\delta_3\int_\Omega u|\nabla v|^4+\delta_4\int_\Omega |\nabla v|^6\Bigr\}+6\beta\delta_4\int_\Omega |\nabla v|^6\\
&\quad +2[3(d_2-\eta)\delta_4-(\epsilon_1+\epsilon_2)\delta_3]\int_\Omega |\nabla v|^{2}|\nabla |\nabla v|^2|^2+(3\mu\delta_1-\frac{n\alpha^2}{18d_2}\delta_2)\int_\Omega u^4\\
&\quad +[2(d_1-\epsilon)\delta_2-\frac{(d_1+d_2)^2}{2\epsilon_2}\delta_3]\int_\Omega |\nabla u|^2|\nabla v|^2+2\beta \delta_2\int_\Omega u^2|\nabla v|^2\\
&\quad+ \Bigr[(2(d_2-\eta)\delta_3-(2\epsilon_3+\frac{(d_1+d_2)^2}{2\epsilon_4})\delta_2\Bigr]\int_\Omega u|\nabla |\nabla v|^2|^2\\
&\quad +4\beta\delta_3\int_\Omega u|\nabla v|^4+2[3(d_1-\epsilon)\delta_1-\epsilon_4\delta_2]\int_\Omega u |\nabla u|^2\\
&\quad+ \Bigr[(2\mu-\frac{\chi^2}{2\epsilon_3})\delta_2- \frac{3\chi^2}{2\epsilon}\delta_1-\frac{\alpha^2}{2}(\frac{1}{\eta}+\frac{n}{2d_2})\delta_3\Bigr]\int_\Omega u^{3} |\nabla v|^{2}\\
&\quad+ \Bigr[(\mu-\frac{\chi^2}{2\epsilon_1})\delta_3- \frac{\chi^2}{2\epsilon}\delta_2-3(\frac{2\alpha^2}{\eta}+\frac{n\alpha^2}{2d_2})\delta_4\Bigr]\int_\Omega u^{2} |\nabla v|^{4}\\
&\leq 3a\delta_1\int_\Omega u^2+2a\delta_2\int_\Omega u |\nabla v|^2+a\delta_3\int_\Omega |\nabla v|^4+3d_2\delta_4\int_{\partial\Omega}  |\nabla v|^{4}\frac{\partial}{\partial \nu} |\nabla v|^2\\
&\quad +2d_2\delta_3\int_{\partial \Omega} u|\bigtriangledown v|^2\frac{\partial}{\partial \nu} |\bigtriangledown v|^2+d_2\delta_2\int_{\partial \Omega} u^2\frac{\partial}{\partial \nu} |\bigtriangledown v|^2
\end{split}
\ee
for all $t\in(0, T_{\text{max}})$ and any positive constants $\epsilon_i, \delta_i$ and $\epsilon\in (0,d_1)$ and $\eta\in (0, d_2) $.
\end{lemma}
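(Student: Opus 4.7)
The proof is a direct analogue of Lemma \ref{key-ineq-3d}: it is obtained by taking linear combinations of the four component differential inequalities already established for each summand in $z(t)$, and then regrouping the resulting terms.

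The plan is, first, to recall the four ingredients. Multiply \eqref{u-l3-gradu-l3:est} of Lemma \ref{u-l3-gradu-l3-lem-45d} by $\delta_1$; this contributes $\frac{d}{dt}(\delta_1\int_\Omega u^3)$ on the left, the dissipation terms $6(d_1-\epsilon)\delta_1\int_\Omega u|\nabla u|^2+3\mu\delta_1\int_\Omega u^4$ on the left, and the source-type terms $\frac{3\chi^2}{2\epsilon}\delta_1\int_\Omega u^3|\nabla v|^2+3a\delta_1\int_\Omega u^2$ on the right. Multiply \eqref{u2-grdv21-f} of Lemma \ref{key-lem-optimal p=2 q=1} by $\delta_2$; multiply \eqref{u1-grdv22-f} of Lemma \ref{key-lem-optimal p=1} by $\delta_3$; and finally multiply \eqref{v-grdv-23} of Lemma \ref{gradv-l6-lem-45d} by $\delta_4$.

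The second step is the bookkeeping. Adding the four weighted inequalities gives a single inequality whose left-hand side carries $\frac{d}{dt} z(t)$ together with the dissipation contributions
$$
6(d_1-\epsilon)\delta_1\int_\Omega u|\nabla u|^2,\quad 2(d_1-\epsilon)\delta_2\int_\Omega |\nabla u|^2|\nabla v|^2,\quad 6(d_2-\eta)\delta_4\int_\Omega |\nabla v|^2|\nabla|\nabla v|^2|^2,
$$
along with $2(d_2-\eta)\delta_3\int_\Omega u|\nabla|\nabla v|^2|^2$, $3\mu\delta_1\int_\Omega u^4$, $2\mu\delta_2\int_\Omega u^3|\nabla v|^2$, $\mu\delta_3\int_\Omega u^2|\nabla v|^4$ and the $\beta$-absorption terms $2\beta\delta_2\int_\Omega u^2|\nabla v|^2+4\beta\delta_3\int_\Omega u|\nabla v|^4+6\beta\delta_4\int_\Omega |\nabla v|^6$. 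On the right-hand side appear the chemotactic cross terms $\frac{3\chi^2}{2\epsilon}\delta_1\int_\Omega u^3|\nabla v|^2$, $\frac{\chi^2}{2\epsilon_3}\delta_2\int_\Omega u^3|\nabla v|^2+\frac{\chi^2}{2\epsilon}\delta_2\int_\Omega u^2|\nabla v|^4$, $\frac{\chi^2}{2\epsilon_1}\delta_3\int_\Omega u^2|\nabla v|^4$, together with the creation-type terms $\frac{\alpha^2}{2}(\tfrac{1}{\eta}+\tfrac{n}{2d_2})\delta_3\int_\Omega u^3|\nabla v|^2$, $3(\tfrac{2\alpha^2}{\eta}+\tfrac{n\alpha^2}{2d_2})\delta_4\int_\Omega u^2|\nabla v|^4$, $\frac{n\alpha^2}{18d_2}\delta_2\int_\Omega u^4$, the spurious dissipation-like terms $2\epsilon_4\delta_2\int_\Omega u|\nabla u|^2$, $[2\epsilon_3+\frac{(d_1+d_2)^2}{2\epsilon_4}]\delta_2\int_\Omega u|\nabla|\nabla v|^2|^2$, $\frac{(d_1+d_2)^2}{2\epsilon_2}\delta_3\int_\Omega |\nabla u|^2|\nabla v|^2$, $2(\epsilon_1+\epsilon_2)\delta_3\int_\Omega |\nabla v|^2|\nabla|\nabla v|^2|^2$, and lastly the $a$-terms and boundary integrals as displayed.

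The third step is purely algebraic: move every term of the ``dissipation type'' that currently sits on the right to the left, subtracting it from its matching dissipation term on the left; and likewise collect the chemotactic/creation-type integrals over $\int_\Omega u^{3}|\nabla v|^{2}$ and $\int_\Omega u^{2}|\nabla v|^{4}$ into single coefficients. This produces exactly the bracketed coefficients appearing in \eqref{com-need-4d}, while the boundary integrals, the $a$-terms, and $\int_\Omega u^2$ remain on the right-hand side untouched. The main bookkeeping obstacle is simply ensuring that each cross term produced by a Young-type estimate in the proofs of Lemmas \ref{key-lem-optimal p=1} and \ref{key-lem-optimal p=2 q=1} is matched with the correct dissipation on the left (e.g.\ the $\epsilon_4\delta_2\int_\Omega u|\nabla u|^2$ term must be subtracted from $6(d_1-\epsilon)\delta_1\int_\Omega u|\nabla u|^2$, yielding the coefficient $2[3(d_1-\epsilon)\delta_1-\epsilon_4\delta_2]$ recorded in \eqref{com-need-4d}); once the matching is read off correctly, no further estimation is needed and \eqref{com-need-4d} follows.
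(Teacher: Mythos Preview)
Your proposal is correct and takes exactly the same approach as the paper: the paper's own proof consists of the single sentence ``By honest computations from Lemmas \ref{u-l3-gradu-l3-lem-45d}--\ref{key-lem-optimal p=2 q=1} and by evident multiplications and additions, one can readily derive the lemma,'' and your write-up simply carries out those multiplications (by $\delta_1,\dots,\delta_4$) and the subsequent regrouping in detail. The term-by-term bookkeeping you describe matches the bracketed coefficients in \eqref{com-need-4d} precisely.
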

\begin{proof}By honest computations from Lemmas \ref{u-l3-gradu-l3-lem-45d} -\ref{key-lem-optimal p=2 q=1} and by evident multiplications and additions, one can readily derive the lemma.
\end{proof}
As to the boundary integrals on the right-hand side of \eqref{com-need-4d},  we deduce from \eqref{geometric-ineq},  \eqref{trace-ineq-epsilon}  and Young's inequality with epsilon that
\be\label{bddary int n=4}\begin{split}
&3d_2\delta_4\int_{\partial\Omega}  |\nabla v|^{4}\frac{\partial}{\partial \nu} |\nabla v|^2+2d_2\delta_3\int_{\partial \Omega} u|\bigtriangledown v|^2\frac{\partial}{\partial \nu} |\bigtriangledown v|^2+d_2\delta_2\int_{\partial \Omega} u^2\frac{\partial}{\partial \nu} |\bigtriangledown v|^2\\
&\leq  3d_2\delta_4K_1\int_{\partial\Omega}  |\nabla v|^{6} +2d_2\delta_3 K_1\int_{\partial \Omega} u|\bigtriangledown v|^4+d_2\delta_2K_1\int_{\partial \Omega} u^2 |\bigtriangledown v|^2\\
&\leq K_{8}\int_{\partial \Omega} u^3+K_{8} \int_{\partial \Omega} |\bigtriangledown v|^6=K_8\|u^{\frac{3}{2}}\|_{L^2(\partial \Omega)}^2++K_8 \| |\bigtriangledown v|^3\|_{L^2(\partial \Omega)}^2\\
&\leq \xi \int_\Omega u|\nabla u|^2+C_\xi\Bigr(\int_\Omega u^{\frac{3}{2}}\Bigr)^2+ \sigma \int_\Omega  |\nabla v|^{2}|\nabla |\nabla v|^2|^2+C_\sigma \Bigr(\int_\Omega  |\nabla v|^3\Bigr)^2
\end{split}
\ee
for any $\xi>0$ and  $\sigma>0$.

Based on this boundary integral estimate and  \eqref{com-need-4d}, we wish to select $\epsilon, \eta, \epsilon_i, \delta_i$  and $\mu$ such that
\begin{equation}\label{mu-min-deltas-4d-general}
\left\{ \begin{array}{ll}
&2[3(d_1-\epsilon)\delta_1-\epsilon_4\delta_2]>0, \\[0.25cm]
&2[3(d_2-\eta)\delta_4-(\epsilon_1+\epsilon_2)\delta_3]> 0, \\[0.25cm]
&2(d_1-\epsilon)\delta_2-\frac{(d_1+d_2)^2}{2\epsilon_2}\delta_3\geq 0, \\[0.25cm]
 &2(d_2-\eta)\delta_3-(2\epsilon_3+\frac{(d_1+d_2)^2}{2\epsilon_4})\delta_2\geq 0, \\[0.25cm]
&3\mu\delta_1-\frac{n\alpha^2}{18d_2}\delta_2\geq 0,\\[0.25cm]
&(2\mu-\frac{\chi^2}{2\epsilon_3})\delta_2- \frac{3\chi^2}{2\epsilon}\delta_1-\frac{\alpha^2}{2}(\frac{1}{\eta}+\frac{n}{2d_2})\delta_3\geq 0, \\[0.25cm]
&(\mu-\frac{\chi^2}{2\epsilon_1})\delta_3- \frac{\chi^2}{2\epsilon}\delta_2-3(\frac{2\alpha^2}{\eta}+\frac{n\alpha^2}{2d_2})\delta_4\geq 0.\end{array}\right.
\end{equation}
Only the third constraint  intertwines with the fourth constraint, while this  can be fulfilled  since we can choose $\epsilon, \eta, \epsilon_i$ such that
\be\label{delta2-delta3-re}
\frac{(d_1+d_2)^2}{4\epsilon_2(d_1-\epsilon)}\leq \frac{2(d_2-\eta)}{2\epsilon_3+\frac{(d_1+d_2)^2}{2\epsilon_4}}\Longleftrightarrow \frac{(d_1+d_2)^2}{4(d_1-\epsilon)(d_2-\eta)}\leq \frac{\epsilon_2}{\epsilon_3+\frac{(d_1+d_2)^2}{4\epsilon_4}}< \frac{\epsilon_2}{\epsilon_3}.
\ee
Then algebraic manipulations from  \eqref{mu-min-deltas-4d-general} and \eqref{delta2-delta3-re} show that
\begin{equation}\label{mu>mu0-4d0}
\ba{ll}
3\mu&\geq \frac{n\alpha^2}{54d_2}\frac{\delta_2}{\delta_1}+ \frac{\chi^2}{4\epsilon_3}+ \frac{3\chi^2}{4\epsilon}\frac{\delta_1}{\delta_2}+\frac{\alpha^2}{4}(\frac{1}{\eta}+\frac{n}{2d_2})\frac{\delta_3}{\delta_2}+\frac{\chi^2}{2\epsilon_1}+ \frac{\chi^2}{2\epsilon}\frac{\delta_2}{\delta_3}+3(\frac{2\alpha^2}{\eta}+\frac{n\alpha^2}{2d_2})\frac{\delta_4}{\delta_3}\\[0.25cm]
&> \sqrt{\frac{n}{18d_2\epsilon}}\alpha |\chi|+ \frac{\chi^2}{4\epsilon_3}+\sqrt{\frac{1}{2\epsilon}(\frac{1}{\eta}+\frac{n}{2d_2})}\alpha |\chi|+\frac{\chi^2}{2\epsilon_1}+(\frac{2\alpha^2}{\eta}+\frac{n\alpha^2}{2d_2})\frac{\epsilon_1+\epsilon_2}{d_2-\eta}\\[0.25cm]
&> \Big\{\sqrt{\frac{n}{18d_2\epsilon}}+\sqrt{\frac{1}{2\epsilon}(\frac{1}{\eta}+\frac{n}{2d_2})}+\sqrt{\frac{1}{(d_2-\eta)}(\frac{2}{\eta}+\frac{n}{2d_2})}\Bigr[\sqrt{2}+\frac{(d_1+d_2)}{2\sqrt{(d_1-\epsilon)(d_2-\eta)}}\Bigr]\Bigr\}\alpha |\chi|
\ea
\end{equation}
\begin{lemma}\label{main-thm-4d}  Let $\Omega\subset \mathbb{R}^ n$ be a bounded smooth domain,  $f$ satisfy  the logistic condition \eqref{log-con} and $d_1,d_2,\alpha,\beta>0$, $a\geq 0$ and $\chi\in\mathbb{R}$. Then, for any
\be\label{mu>mu0-4d}
\mu>\mu_0(n,d_1,d_2,\alpha,\chi):=\max\Bigr\{\frac{1}{3}h(n,d_1,d_2), \frac{n}{\sqrt{2n+4}-2}(\frac{1}{d_1}+\frac{2}{d_2})\Bigr\}\alpha|\chi|
\ee
with
$$
\ba{ll}
h(n,d_1,d_2)=\min_{0<\epsilon<d_1, 0<\eta<d_2}&\Big\{\sqrt{\frac{n}{18d_2\epsilon}}+\sqrt{\frac{1}{2\epsilon}(\frac{1}{\eta}+\frac{n}{2d_2})}\\[0.25cm]
&+\sqrt{\frac{1}{(d_2-\eta)}(\frac{2}{\eta}+\frac{n}{2d_2})}\Bigr[\sqrt{2}+\frac{(d_1+d_2)}{2\sqrt{(d_1-\epsilon)(d_2-\eta)}}\Bigr]\Bigr\},
\ea
$$
 there exists a constant $C(u_0, v_0)$ such that the unique nonnegative solution $(u,v)$ of the chemotaxis-growth system \eqref{para-para} satisfies
$$
\int_\Omega u^3(\cdot, t)+\int_\Omega u^2(\cdot, t)|\nabla v(\cdot, t)|^2+\int_\Omega u|\nabla v|^4+\int_\Omega |\nabla v(\cdot, t)|^6\leq C(u_0, v_0), \forall t\in (0, T_{\text{max}}).
$$
Thus, if $n\leq 5$, then the solution $(u,v)$ exists globally in time, i.e.,  $T_{\text{max}}=\infty$ and  $(u(\cdot,t), v(\cdot,t))$ is uniformly bounded in  $L^\infty(\Omega)\times W^{1,\infty}(\Omega)$ for all $t\in (0, \infty)$.
\end{lemma}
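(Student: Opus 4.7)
The plan is to replicate, in the four-term setting \eqref{couple-ineq-4d}, the scheme that produced Lemma \ref{main-thm-3d} from the three-term functional \eqref{couple-ineq}: starting from the master differential inequality \eqref{com-need-4d} of Lemma \ref{key-ineq--45d}, derive a Gronwall-type estimate $z'(t)+\zeta z(t)\leq C_\zeta$ with $\zeta>0$ and $C_\zeta$ independent of $t$, and then promote the resulting $L^3$-control on $u$ to the $L^\infty\times W^{1,\infty}$-bound via the $L^{n/2+\epsilon}$-criterion of \cite{BBTW15,Xiangpre} available as soon as $n\leq 5$. The statement and the threshold \eqref{mu>mu0-4d} already encode the outcome of the Young-inequality optimization in \eqref{mu>mu0-4d0}, so the task reduces to verifying that all parameters can be selected so the argument closes whenever $\mu>\mu_0$.

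\textbf{Step 1 (parameter selection).} Given $\mu>\mu_0$, I choose $\epsilon\in(0,d_1)$ and $\eta\in(0,d_2)$ at (or arbitrarily close to) the minimizer defining $h(n,d_1,d_2)$, and then pick $\epsilon_1,\epsilon_2,\epsilon_3,\epsilon_4>0$ so that the AM-GM bounds underlying the chain \eqref{mu>mu0-4d0} are sharp. After normalizing $\delta_2=1$, I fix $\delta_1,\delta_3,\delta_4$ from the equality cases of the matching constraints in \eqref{mu-min-deltas-4d-general}, with a positive strict margin in each of the seven inequalities. The only nontrivial coupling is between the constraints governing $\delta_2$ and $\delta_3$, whose joint solvability is precisely the algebraic condition \eqref{delta2-delta3-re}, and this is guaranteed by the very choice of $\epsilon,\eta,\epsilon_2,\epsilon_3,\epsilon_4$ that realizes $h(n,d_1,d_2)$. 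The second clause $\frac{n}{\sqrt{2n+4}-2}(\frac{1}{d_1}+\frac{2}{d_2})\alpha|\chi|$ inside the max in \eqref{mu>mu0-4d} ensures Lemma \ref{main-thm-3d} is simultaneously applicable, supplying a priori $L^2$-control on $u$ and $L^4$-control on $\nabla v$ that will be used freely below.

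\textbf{Step 2 (boundary and lower-order absorption).} With those parameters fixed I dispose of the three boundary integrals on the right of \eqref{com-need-4d} using \eqref{bddary int n=4}, which itself rests on the pointwise geometric inequality \eqref{geometric-ineq} (valid on arbitrary smooth, not necessarily convex $\Omega$) and on the $\epsilon$-trace inequality \eqref{trace-ineq-epsilon}. Choosing the trace parameters $\xi,\sigma>0$ sufficiently small, the two principal boundary contributions are absorbed into the positive dissipative terms $\int_\Omega u|\nabla u|^2$ and $\int_\Omega |\nabla v|^2|\nabla|\nabla v|^2|^2$, while the $L^1$- and $L^{3/2}$-remainders are controlled by Lemma \ref{ul1-vgradl2} together with the 3-D bound of Step 1. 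The interior lower-order integrals $3a\delta_1\!\int u^2$, $2a\delta_2\!\int u|\nabla v|^2$, $a\delta_3\!\int|\nabla v|^4$ are each bounded, via Young's inequality, by a small fraction of the corresponding higher-order dissipation term (respectively $\int u^4$, $\int u^3|\nabla v|^2$, $\int|\nabla v|^6$) plus a constant depending only on $|\Omega|$, $a$ and $\mu$.

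\textbf{Step 3 (Gronwall and $L^\infty$-promotion).} Adding $\beta z(t)$ to both sides and absorbing the new summands $\beta\delta_i$ against $u^3$, $u^2|\nabla v|^2$, $u|\nabla v|^4$ and $|\nabla v|^6$ into the strictly positive residual dissipations secured in Step 1 (again by Young, at the cost of an additive constant) yields $z'(t)+\zeta z(t)\leq C_\zeta$ on $(0,T_{\max})$. Integration gives the asserted uniform bound on $z(t)$, in particular on $\|u(\cdot,t)\|_{L^3}$. For $n\leq 5$ one has $3>n/2$, so the $L^{n/2+\epsilon}$-criterion of \cite[R1, Theorem 1.1]{Xiangpre} (equivalently \cite[Lemma 3.2]{BBTW15}) upgrades this to uniform $L^\infty(\Omega)\times W^{1,\infty}(\Omega)$ control, and \eqref{T_m-cri} then forces $T_{\max}=\infty$. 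The main obstacle is the bookkeeping in Step 1: the ratios $\delta_1/\delta_2$, $\delta_3/\delta_2$, $\delta_4/\delta_3$ are forced by three different constraints, and their mutual compatibility is exactly the chain of AM-GM bounds in \eqref{mu>mu0-4d0}; once this optimization is locked in, Steps 2-3 are entirely parallel to the 3-D argument.
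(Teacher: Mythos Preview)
Your proposal is correct and follows essentially the same route as the paper: fix the parameters $\epsilon,\eta,\epsilon_i,\delta_i$ so that the system \eqref{mu-min-deltas-4d-general} holds with strict margin (which is possible precisely when $\mu>\mu_0$ by the optimization \eqref{mu>mu0-4d0} and the compatibility \eqref{delta2-delta3-re}), invoke Lemma \ref{main-thm-3d} to bound the lower-order interior terms, absorb the boundary integrals via \eqref{bddary int n=4} with small $\xi,\sigma$, derive the Gronwall inequality for $z(t)$, and then upgrade via the $L^{n/2+\epsilon}$-criterion. The only cosmetic difference is that the paper bounds the interior terms $3a\delta_1\!\int u^2$, $2a\delta_2\!\int u|\nabla v|^2$, $a\delta_3\!\int|\nabla v|^4$ directly by the constant \eqref{u2+gradv4 bdd} from the three-dimensional lemma rather than by Young absorption into the higher-order dissipations, but either works.
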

\begin{proof}Due to  $\mu>\mu_0$ given in \eqref{mu>mu0-4d}, we first know from Lemma  \ref{main-thm-3d} that
\be\label{u2+gradv4 bdd}
 3a\delta_1\int_\Omega u^2+2a\delta_2\int_\Omega u |\nabla v|^2+a\delta_3\int_\Omega |\nabla v|^4\leq K_{9}(u_0,v_0)
\ee
 for all $ t\in (0, T_{\text{max}})$. Moreover, due to \eqref{mu>mu0-4d0},  the fact that $\mu>\mu_0$ allows us to fix  $\epsilon, \eta, \epsilon_i, \delta_i$  satisfying  \eqref{delta2-delta3-re} and \eqref{mu-min-deltas-4d-general}. Indeed,  we first choose   $(\epsilon, \eta)=(\epsilon_0, \eta_0)$ so that $h$ is minimized, then we choose $
\epsilon_1$ to be the minimizer of
$$
\frac{\chi^2}{2\epsilon_1}+(\frac{2\alpha^2}{\eta}+\frac{n\alpha^2}{2d_2})\frac{\epsilon_1}{d_2-\eta},
$$
take  $\epsilon_3=1$ and then fix $\epsilon_2$ and $\epsilon_4$ so that \eqref{delta2-delta3-re} is satisfied. Then, based on \eqref{mu-min-deltas-4d-general}, \eqref{delta2-delta3-re} and \eqref{mu>mu0-4d0},  all $\delta_i$ can be chosen readily.

Upon such well chosen $\epsilon,\eta, \epsilon_i$ and $\delta_i$,  using \eqref{com-need-4d}, \eqref{bddary int n=4} and  arguing as Lemma \ref{main-thm-3d}, we can easily deduce a Gronwal inequality for the coupled quantity $z$ as defined by \eqref{couple-ineq-4d} of the form \eqref{gronwall-ineq-4d}. As a matter of fact, by \eqref{com-need-4d} and \eqref{bddary int n=4}, we have
\be\label{com-need-4d-growall}
\begin{array}{ll}
&z^\prime(t)+\beta z(t)+2[3(d_1-\epsilon)\delta_1-\epsilon_4\delta_2-\xi]\int_\Omega u |\nabla u|^2+(3\mu\delta_1-\frac{n\alpha^2}{18d_2}\delta_2)\int_\Omega u^4\\[0.25cm]
&\quad +2[3(d_2-\eta)\delta_4-(\epsilon_1+\epsilon_2)\delta_3-\sigma]\int_\Omega |\nabla v|^{2}|\nabla |\nabla v|^2|^2\\[0.25cm]
&\leq 3a\delta_1\int_\Omega u^2+2a\delta_2\int_\Omega u |\nabla v|^2+a\delta_3\int_\Omega |\nabla v|^4\\[0.25cm]
&\quad +\beta\delta_1\int_\Omega u^3+C_\xi\int_\Omega u^{\frac{3}{2}}+C_\sigma \int_\Omega  |\nabla v|^3.
\end{array}
\ee
Now, by fixing $\xi$ and $\sigma$ sufficiently small, we deduce from \eqref{u2+gradv4 bdd},  \eqref{com-need-4d-growall} and Holder inequality that
$$
z^\prime (t)+\beta z(t)\leq C(u_0,v_0),
$$
 giving the desired inequality stated in the lemma. In particular,  it assures  that $\|u(\cdot,t)\|_{L^3(\Omega)}$  is uniformly bounded. So the $L^{\frac{n}{2}+\epsilon}$-criterion \cite{BBTW15, Xiangpre} shows, if $n\leq 5$,  that $(u(\cdot,t), v(\cdot,t))$ is uniformly bounded in $L^\infty(\Omega)\times W^{1,\infty}(\Omega)$ for all $t\in (0, \infty)$.
\end{proof}

The algorithm for proving $L^2$ and $L^3$-boundedness of $u$  may be in principle  carried over to obtain $L^r$--boundedness of $u$ for $r=4,5,\cdots$ inductively by establishing a  Gronwal  inequality for  the coupled quantity (motivated by \cite{Win10} again)
\be\label{couple-ineq-nd}
z(t):=\sum_{k=0}^r \delta_i\int_\Omega u^k|\nabla v|^{2(r-k)},\quad \quad  t\in(0, T_{\text{max}})
\ee
of the form
$$
z^\prime(t)+\epsilon z(t)\leq C_\epsilon, \quad  t\in(0, T_{\text{max}})
$$
for some carefully chosen positive constants $\delta_i$ and small $\epsilon>0$ and perhaps large $C_\epsilon$ independent of $t$. In this process, as $r$ becomes large, we will have more terms to cope with and more boundary integrals will appear.  While simplification of the process is possible. In fact, for $r=2,3,\cdots$,  suppose that  $\mu>\mu_0^{(r)}(n, d_1,d_2,  \alpha, \chi)$ is the condition under which  $L^r$-boundedness of $u$ (indeed, uniform boundedness of $z(t)$ as in \eqref{couple-ineq-nd}) is guaranteed. Then, based on the procedures for $r=2$ and $r=3$,  if $\mu>\mu_0^{(r-1)}(n, d_1,d_2,  \alpha, \chi)$, the process for $r$  may be continued  with the assumption that $a=0$ and $\Omega$ is convex.

 Once this is done, the $L^{r}$-boundedness  of $u$ will be obtained. Further, by choosing  $r=\lfloor\frac{n}{2}\rfloor+1$,  the $L^{\frac{n}{2}+\epsilon}$-criterion in \cite{BBTW15, Xiangpre} yields the desired $L^\infty$-boundedness of $u$. Here, we state the following expected  general result that offers a quantitative description on when  logistic damping dominates over cheomtactic aggregation for \eqref{para-para} in $\Omega\subset \mathbb{R}^n$. While, we have to leave a rigorous examination for future study.
\begin{proposition}\label{main-thm-nd}  Let $\Omega\subset \mathbb{R}^ n$ be a bounded smooth domain,  $f$ satisfy  the logistic condition \eqref{log-con} and $d_1,d_2,\alpha,\beta>0$, $a\geq 0$ and $\chi\in\mathbb{R}$. Then, for any natural number $r\geq2$,  there exist  a function  $\theta_0^{(r)}(n,d_1,d_2)$ which tends to infinity as $d_1\rightarrow 0$ or $d_2\rightarrow 0$  (and is decreasing in $d_1, d_2$) and constant $C(u_0, v_0)$ such that, for any
$$
\mu>\theta_0^{(r)}(n,d_1,d_2)\alpha|\chi|,
$$
the nonnegative solution $(u,v)$ of the chemotaxis-growth system \eqref{para-para} satisfies
$$
\sum_{k=0}^r \int_\Omega u^k|\nabla v|^{2(r-k)}\leq C(u_0, v_0),\quad \quad  t\in(0, T_{\text{max}}).
$$
As a result, if $n\leq 2r-1$, then  $(u,v)$ exists globally in time, i.e.,  $T_{\text{max}}=\infty$ and  $(u(\cdot,t), v(\cdot,t))$ is uniformly bounded in $L^\infty(\Omega)\times W^{1,\infty}(\Omega)$ for all $t\in (0, \infty)$.
\end{proposition}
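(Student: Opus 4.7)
The plan is to prove Proposition \ref{main-thm-nd} by induction on $r \geq 2$, piggybacking on the arguments carried out in detail for $r=2$ (Lemma \ref{main-thm-3d}) and $r=3$ (Lemma \ref{main-thm-4d}), which serve as the base cases. For the inductive step, assuming the conclusion holds for $r-1$ with some threshold $\theta_0^{(r-1)}$, I would establish the same type of coupled Gronwall-type inequality for
$$
z(t)=\sum_{k=0}^{r}\delta_k\int_\Omega u^k|\nabla v|^{2(r-k)}
$$
and then invoke the $L^{n/2+\varepsilon}$-boundedness criterion of \cite{BBTW15, Xiangpre} with $r=\lfloor n/2\rfloor+1$ to pass from $L^r$-boundedness of $u$ to $L^\infty$-boundedness.

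The main machinery is already in place: Lemma \ref{key-lem-optimal} applied with each admissible pair $(p,q)$ satisfying $p+q=r$, $p,q \geq 1$, together with the endpoint estimates from Lemma \ref{u-lp-lem-optimal} (with $p=r$) and Lemma \ref{gradv-2q-lem-optimal} (with $q=r$), yields $r+1$ differential inequalities, one for each term $\int_\Omega u^k|\nabla v|^{2(r-k)}$. First I would take a suitable positive linear combination of these with weights $\delta_0,\dots,\delta_r$, introducing free parameters $\epsilon,\eta\in(0,d_1)\times(0,d_2)$ and several auxiliary $\epsilon_j>0$ coming from Young's inequality with $\varepsilon$, exactly as in the proof of Lemma \ref{key-ineq--45d}. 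The goal is to arrange, for each $k$, that the "bad" bulk terms of the form $u^{k+1}|\nabla v|^{2(r-k)}$ and $u^{k}|\nabla v|^{2(r-k+1)}$ appearing on the right are dominated by the corresponding dissipation terms (coming from the $\mu u^{p+1}|\nabla v|^{2q}$ contribution and from integrations by parts involving $|\nabla|\nabla v|^2|^2$). This leads to a finite system of linear inequalities in $\delta_0,\dots,\delta_r$ whose solvability condition reads $\mu>\theta_0^{(r)}(n,d_1,d_2)\alpha|\chi|$, with $\theta_0^{(r)}$ obtained by minimizing over the free parameters $\epsilon,\eta,\epsilon_j$ in exactly the same way the explicit expressions in Section 3 and Section 4 were derived.

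The boundary contributions, which proliferate as $r$ grows, would be handled uniformly by the pointwise inequality $\partial_\nu|\nabla v|^2\leq K_1(\Omega)|\nabla v|^2$ from \cite{MS14} combined with the $\epsilon$-trace inequality \eqref{trace-ineq-epsilon}; for each $k$, the boundary integral $\int_{\partial\Omega}u^k|\nabla v|^{2(r-k-1)}\partial_\nu|\nabla v|^2$ is first bounded by $\int_{\partial\Omega}(u^{r}+|\nabla v|^{2r})$ via Young, and then absorbed into $\xi\int_\Omega u^{r-1}|\nabla u|^2+\sigma\int_\Omega|\nabla v|^{2(r-1)}|\nabla|\nabla v|^2|^2$ plus lower-order $L^1$-controlled terms, by choosing $\xi,\sigma$ sufficiently small relative to the already-fixed $\delta_k$. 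The resulting differential inequality $z'(t)+\beta z(t)\leq C(u_0,v_0)$, combined with the inductive hypothesis that bounds every intermediate term $\int_\Omega u^j|\nabla v|^{2(r-1-j)}$, closes the argument.

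The main obstacle will not be the analytic mechanics (they are essentially a mechanical extension of Lemmas \ref{main-thm-3d} and \ref{main-thm-4d}), but the combinatorial/algebraic bookkeeping: one must show that the linear system determining the $\delta_k$ actually admits a positive solution under a single explicit threshold on $\mu$, and that this threshold, minimized over the auxiliary parameters, can be written as $\theta_0^{(r)}(n,d_1,d_2)\alpha|\chi|$ with $\theta_0^{(r)}$ independent of $\beta$, $a$, $|\Omega|$, and the initial data, and blowing up as $d_1\to 0$ or $d_2\to 0$. A clean way to verify positivity of the $\delta_k$ is to process the chain of constraints from $k=r$ down to $k=0$: once the pair of constraints tying consecutive indices (as in \eqref{delta2-delta3-re}) is shown to be compatible by a single bounding relation on the $\epsilon_j$'s, the remaining $\delta_k$ can be chosen recursively. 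For $a=0$ and $\Omega$ convex, as noted before the proposition statement, the boundary and $L^1$-source terms drop out and the induction simplifies considerably, which is the reduction I would prove first before re-incorporating the logistic source $a$ and the non-convex boundary corrections.
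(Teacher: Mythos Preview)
Your proposal aligns with the strategy the paper sketches immediately before stating the proposition: induction on $r$ via the coupled functional $z(t)=\sum_{k=0}^r\delta_k\int_\Omega u^k|\nabla v|^{2(r-k)}$, combining Lemmas \ref{u-lp-lem-optimal}, \ref{gradv-2q-lem-optimal}, \ref{key-lem-optimal} with well-chosen weights, absorbing boundary integrals via \eqref{geometric-ineq} and \eqref{trace-ineq-epsilon}, and closing with the $L^{n/2+\epsilon}$-criterion. However, you should be aware that the paper does \emph{not} provide a proof of this proposition at all. The sentence immediately preceding the statement reads: ``Here, we state the following expected general result \ldots\ While, we have to leave a rigorous examination for future study.'' Proposition \ref{main-thm-nd} is presented as a conjectural extension, with only the cases $r=2$ (Lemma \ref{main-thm-3d}) and $r=3$ (Lemma \ref{main-thm-4d}) carried out in full.

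So your plan is not being compared against an existing proof but rather against the paper's heuristic outline, and it matches that outline closely. You have correctly identified the genuine obstacle the paper leaves open: the combinatorial solvability of the chain of constraints on the $\delta_k$ (the analogue of \eqref{mu-min-deltas-3d-general} and \eqref{mu-min-deltas-4d-general} for general $r$) under a single threshold of the form $\mu>\theta_0^{(r)}(n,d_1,d_2)\alpha|\chi|$, together with the monotonicity and blow-up of $\theta_0^{(r)}$ in $d_1,d_2$. Your recursive scheme (processing constraints from $k=r$ down to $k=0$, checking compatibility of consecutive pairs as in \eqref{delta2-delta3-re}) is the natural approach, but note that for general $r$ the cross-terms $\chi pq\int_\Omega u^p|\nabla v|^{2(q-1)}\nabla v\cdot\nabla|\nabla v|^2$ and $(d_1+d_2)pq\int_\Omega u^{p-1}|\nabla v|^{2(q-1)}\nabla u\cdot\nabla|\nabla v|^2$ in \eqref{up-grdv2q-f} each require a Young splitting that couples \emph{three} consecutive indices rather than two, so the compatibility check is a tridiagonal-type system rather than a simple chain. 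Making this explicit and verifying positivity for all $r$ is precisely the ``rigorous examination'' the paper defers.
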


\subsection{The special case that $d_1=d_2$ and  $\Omega\subset \mathbb{R}^n$ is convex}

In the special case that $\tau=1$ in \eqref{minimal-model}, $\chi>0$ (positive chemotaxis)   and $\Omega\subset \mathbb{R}^n$ is convex, the explicit lower bound
$$
\mu>\frac{n}{4}\chi
$$
ensuring global boundedness to the solution of \eqref{minimal-model} has  been elucidated  in \cite{Win10} by establishing a parabolic inequality for a combination of $u$ and $|\nabla v|^2$. Here, in this subsection, for the full-parameter chemotaxis model  \eqref{para-para}, we will  write down all the details for convenience and completeness.
\begin{lemma}\label{main-thm-nd-convex}  Let $\Omega\subset \mathbb{R}^ n$ be a bounded smooth convex domain,  $f$ satisfy  the logistic condition \eqref{log-con} and $d_1,d_2,\alpha,\beta, \chi>0$, $a\geq 0$. Assume that
$$
d_1=d_2, \quad \quad \mu>\frac{n}{4d_1}\alpha\chi,
$$
the unique nonnegative solution $(u,v)$ of the chemotaxis-growth system \eqref{para-para}   exists globally in time, i.e.,  $T_{\text{max}}=\infty$ and  is bounded in the following way:
$$
 u\leq \max\Bigr\{\max_{\bar{\Omega}\times [0,1]}u, \frac{1}{2\alpha}\max\Bigr\{ \max_{\bar{\Omega}}(2\alpha u(\cdot,1)+\chi|\nabla v(\cdot,1)|^2), \frac{a\alpha}{\beta}+\frac{\alpha \beta}{\mu-\frac{n\alpha}{4d_1}\chi}\Bigr\} \Bigr\}
$$
and
$$
v\leq \max\Bigr\{\max_{\bar{\Omega}\times [0,1]}v, \frac{1}{2\beta}\max\Bigr\{ \max_{\bar{\Omega}}(2\alpha u(\cdot,1)+\chi|\nabla v(\cdot,1)|^2), \frac{a\alpha}{\beta}+\frac{\alpha \beta}{\mu-\frac{n\alpha}{4d_1}\chi}\Bigr\} \Bigr\}
$$
on $\bar{\Omega}\times [0, \infty)$. In addition, for any $\epsilon>0$,
$$
|\nabla v|\leq  \frac{1}{\chi}\max\Bigr\{ \max_{\bar{\Omega}}(2\alpha u(\cdot,\epsilon)+\chi|\nabla v(\cdot,\epsilon)|^2), \frac{a\alpha}{\beta}+\frac{\alpha \beta}{\mu-\frac{n\alpha}{4d_1}\chi}\Bigr\} \text{ on  } \bar{\Omega}\times [\epsilon,\infty).
$$
Moreover, if $\|\nabla v_0\|_{L^\infty(\Omega)}<\infty$, then $\epsilon$ can be chosen to be zero.
\end{lemma}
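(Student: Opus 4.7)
The strategy, going back to Winkler \cite{Win10}, is to track the coupled quantity
$$z:=2\alpha u+\chi|\nabla v|^2$$
and derive a single scalar parabolic inequality for it. Differentiating $z$ in time and using both equations of \eqref{para-para}, together with the pointwise identity $2\nabla v\cdot\nabla\Delta v=\Delta|\nabla v|^2-2|D^2v|^2$, the cross terms $\pm2\alpha\chi\nabla u\cdot\nabla v$ cancel exactly and one obtains
$$z_t-d_1\Delta z\le 2\alpha f(u)-2\alpha\chi u\,\Delta v-2\chi d_1|D^2v|^2-2\chi\beta|\nabla v|^2,$$
where the hypothesis $d_1=d_2$ is crucial for the $-d_1\Delta z$ to arise cleanly. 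Controlling the bad term $-2\alpha\chi u\Delta v$ via Young's inequality with $\epsilon=2\chi d_1/n$ and the elementary bound $|\Delta v|^2\le n|D^2v|^2$ from \eqref{C-S-ineq-c} absorbs $\frac{2\chi d_1}{n}|\Delta v|^2$ into $2\chi d_1|D^2v|^2$ and produces a residue of $\frac{n\alpha^2\chi}{2d_1}u^2$. Invoking $f(u)\le a-\mu u^2$ then yields the master inequality
$$z_t-d_1\Delta z\le 2\alpha a-\lambda u^2-2\chi\beta|\nabla v|^2,\qquad \lambda:=2\alpha\Bigl(\mu-\tfrac{n\alpha\chi}{4d_1}\Bigr)>0,$$
where positivity of $\lambda$ is precisely the standing assumption.

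Next I would handle the boundary. Since $\Omega$ is convex and $\partial_\nu v=\partial_\nu u=0$, the geometric inequality \eqref{geometric-ineq} specializes to $\partial_\nu|\nabla v|^2\le 0$ on $\partial\Omega$ (no curvature constant needed in the convex case), whence $\partial_\nu z\le 0$ on $\partial\Omega$. Combined with the master inequality above, the parabolic maximum principle applies on $\overline\Omega\times[1,T_{\max})$: starting from $t=1$ (where $u,v\in C^2(\overline\Omega)$ by Lemma~\ref{local-in-time}), the supremum of $z$ is attained either at $t=1$ or at some interior-in-time point $(x_0,t_0)$ where $z_t\ge 0$ and $\Delta z\le 0$. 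At such a point the master inequality collapses to the pointwise constraint
$$\lambda u(x_0,t_0)^2+2\chi\beta|\nabla v(x_0,t_0)|^2\le 2\alpha a.$$

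The key trick is now a one-variable constrained optimization: maximizing $2\alpha s+\chi r$ over $s,r\ge 0$ subject to $\lambda s^2+2\chi\beta r\le 2\alpha a$. Solving for $r$ from the active constraint and optimizing in $s$ (critical point $s=2\alpha\beta/\lambda$) gives the clean bound
$$z(x_0,t_0)\le\frac{2\alpha^2\beta}{\lambda}+\frac{\alpha a}{\beta}=\frac{\alpha\beta}{\mu-\frac{n\alpha}{4d_1}\chi}+\frac{a\alpha}{\beta},$$
which is exactly the second entry in the stated maxima. Hence on $\overline\Omega\times[1,T_{\max})$,
$$2\alpha u+\chi|\nabla v|^2\le M:=\max\Bigl\{\max_{\overline\Omega}(2\alpha u(\cdot,1)+\chi|\nabla v(\cdot,1)|^2),\ \tfrac{a\alpha}{\beta}+\tfrac{\alpha\beta}{\mu-n\alpha\chi/(4d_1)}\Bigr\},$$
which immediately gives $u\le M/(2\alpha)$ and $|\nabla v|^2\le M/\chi$ on that set. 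On $[0,1]$ one uses the pointwise continuity bound of the local solution. For the bound on $v$ itself I would apply a scalar maximum principle to the $v$-equation: the right-hand side $\alpha u-\beta v$ is bounded above by $M/2-\beta v$, so $v\le\max\{\max_{\overline\Omega\times[0,1]}v,\,M/(2\beta)\}$ as stated. The refinement with $\epsilon=0$ follows by running the same argument starting at $t=0$ when $\|\nabla v_0\|_{L^\infty}<\infty$ ensures $z(\cdot,0)\in L^\infty$.

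Global existence is then automatic: $\|u(\cdot,t)\|_{L^\infty}$ is uniformly bounded and, together with the bound on $|\nabla v|$, it follows that $\|v(\cdot,t)\|_{W^{1,\infty}}$ stays bounded, so the extensibility criterion \eqref{T_m-cri} in Lemma~\ref{local-in-time} forces $T_{\max}=\infty$. The main obstacle, conceptually, is the exact cancellation of the $\nabla u\cdot\nabla v$ cross-term that makes the $z$-equation tractable, which is the reason $d_1=d_2$ and $\chi>0$ enter as structural hypotheses; the convexity assumption is unavoidable here because it furnishes the sign of $\partial_\nu|\nabla v|^2$ without which the pointwise maximum principle argument cannot be closed.
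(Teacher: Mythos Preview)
Your proof is correct and follows the same strategy as the paper: form $w=2\alpha u+\chi|\nabla v|^2$, derive a scalar parabolic inequality for it via the identity \eqref{point-wise-id} and the bound \eqref{C-S-ineq-c}, use convexity of $\Omega$ to get $\partial_\nu w\le 0$ on $\partial\Omega$, and conclude by the maximum principle together with a separate comparison for $v$. The only cosmetic difference is in extracting the constant: the paper adds $2\beta w$ to both sides to obtain $w_t-d_1\Delta w+2\beta w\le M$ with $M=\max_{s\ge 0}\{2a\alpha-\lambda s^2+4\alpha\beta s\}=2a\alpha+2\alpha\beta^2/(\mu-\tfrac{n\alpha}{4d_1}\chi)$ and then compares against the constant supersolution $M/(2\beta)$, whereas you reach the identical value by a two-variable constrained optimization at an interior maximum point.
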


\begin{proof}Taking  the gradient first and then multiplying  it by $\bigtriangledown v$ in the $v$-equation of (\ref{para-para}) and then using \eqref{point-wise-id} and the fact that $d_1=d_2$, one derives
\be\label{dgradvdt-c}
(|\bigtriangledown v|^2)_t
=d_1\bigtriangleup|\bigtriangledown v|^2-2d_1|D^2v|^2-2\beta |\bigtriangledown v|^2+2 \alpha\bigtriangledown u \bigtriangledown v.
\ee
 Multiplying the $u$-equation of \eqref{para-para} by $2\alpha $ and the equation \eqref{dgradvdt-c} by $\chi$ yields
$$
(2\alpha u+\chi|\nabla v|^2)_t=d_1\Delta (2\alpha u+\chi|\nabla v|^2)-2d_1\chi|D^2v|^2-2\beta\chi|\nabla v|^2-2\alpha \chi u\Delta v+2\alpha f(u).
$$
Notice from Cauchy-Schwarz inequality and  \eqref{C-S-ineq-c} that
$$
-2\alpha \chi u\Delta v\leq\frac{n\chi\alpha^2}{2d_1}u^2+ \frac{2d_1}{n}\chi|\Delta v|^2\leq \frac{n\chi\alpha^2}{2d_1}u^2+2d_1\chi|D^2v|^2,
$$
we then deduce from the logistic condition $f(u)\leq a-\mu u^2$ that
\be\label{para-max-prin}
(2\alpha u+\chi|\nabla v|^2)_t\leq d_1\Delta (2\alpha u+\chi|\nabla v|^2)-2\beta\chi|\nabla v|^2+2a\alpha-2\alpha (\mu-\frac{n\alpha}{4d_1}\chi)u^2.
\ee
Recall that, for a  convex domain $\Omega$, we have the well-known  fact that $\frac{\partial}{\partial \nu}(|\bigtriangledown v|^2)\leq 0$ for any function satisfying $\frac{\partial v}{\partial \nu}=0$  on $\partial \Omega$ ; see Matano \cite[Lemma 5.3]{MA79} or \cite{Win10} or \cite{DGG98}.  Consequently, by the homogeneous boundary conditions for $u$ and $v$, we get
\be\label{para-max-prin-w}
\frac{\partial }{\partial \nu} w:=\frac{\partial }{\partial \nu}(2\alpha u+\chi|\nabla v|^2)=2\alpha\frac{\partial u}{\partial \nu}+2\chi\frac{\partial}{\partial \nu}|\nabla v|^2=2\chi\frac{\partial}{\partial \nu}|\nabla v|^2\leq 0\text{ on } \partial \Omega.
\ee
Now, since  $\mu>\frac{n\alpha }{4d_1}\alpha \chi$, using elementary calculations we find that
\be\label{para-max-prinw}
w_t- d_1\Delta w+2\beta w\leq M,
\ee
where
$$
 M=\max\{2a\alpha-2\alpha (\mu-\frac{n\alpha}{4d_1}\chi)u^2+4\alpha\beta u|u\geq 0\}=2a\alpha+\frac{2\alpha\beta^2}{\mu-\frac{n\alpha}{4d_1}\chi} <\infty.
$$
Recall that from Lemma \ref{local-in-time} that $(u,v)\in C^{2,1}(\bar{\Omega}\times (0, T_{\text{max}}))$. So, we will perform a small time shift and treat any positive time as the "initial time".  Then, by \eqref{para-max-prinw} and \eqref{para-max-prin-w}, we conclude from the  maximum principle and the Hopf boundary point lemma, for any $\epsilon\in (0, T_{\text{max}})$,  that
$$
w=2\alpha u+\chi|\nabla v|^2\leq \max\Bigr\{ \max_{\bar{\Omega}}(2\alpha u(\cdot,\epsilon)+\chi|\nabla v(\cdot,\epsilon)|^2), \frac{ M}{2\beta}\Bigr\}\text{ on } \bar{\Omega}\times [\epsilon, T_{\text{max}}).
$$
This gives the uniform  boundedness for the coupled quantity $2\alpha u+\chi|\nabla v|^2$. The positiveness of $\chi$ then  directly implies the $L^\infty$-boundedness of $u$ and $\nabla v$:
$$u
\leq \max\Bigr\{\max_{\bar{\Omega}\times [0,\epsilon]}u, \frac{1}{2\alpha}, \max\Bigr\{ \max_{\bar{\Omega}}(2\alpha u(\cdot, \epsilon)+\chi|\nabla v(\cdot,\epsilon)|^2), \frac{ M}{2\beta}\Bigr\} \Bigr\}
$$
on  $\bar{\Omega}\times [\epsilon, T_{\text{max}})$ and
$$
|\nabla v|
\leq \frac{1}{\chi}\max\Bigr\{ \max_{\bar{\Omega}}(2\alpha u(\cdot,\epsilon)+\chi|\nabla v(\cdot,\epsilon)|^2), \frac{ M}{2\beta}\Bigr\}\text{ on } \bar{\Omega}\times [\epsilon, T_{\text{max}}).
$$
These combined with the blow-up criterion \eqref{T_m-cri}  of Lemma \ref{local-in-time} show  that $(u,v)$ exists globally in time, i.e., $T_{\text{max}}=\infty$.

Finally, an application of maximum principle to the $v$-equation of \eqref{para-para} on the region $\bar{\Omega}\times [1,\infty)$ gives the bound for $v$. Indeed, $|\nabla v|$ is bounded if $|\nabla v_0|$ is.
\end{proof}

\section{Proof of the large time behavior for the KS model \eqref{min-std}}
In this section, we show the proof of Theorem \ref{large time}, which  relies on finding  so-called Lyapounov  functionals. Here, we will present all the  necessary details for the clarity of obtaining the explicit convergence rates.
 \begin{proof} We modified the functional in \cite{HZ16} as
 \be\label{H-def}
 H(t)=\int_\Omega \Bigr(u-\frac{\kappa}{\mu}-\frac{\kappa}{\mu}\ln (\frac{\mu}{\kappa}u)\Bigr)+\delta\int_\Omega (v-\frac{\alpha\kappa}{\beta\mu})^2, \quad \delta=\frac{\kappa\chi^2}{8d_1d_2\mu}.
 \ee
 Differentiating $H$, using  \eqref{min-std} and integrating by parts, we deduce from Cauchy-Schwarz inequality that

\begin{equation}\label{diff-H2}\begin{split}
\frac{d}{dt} H(t) &=  \int_\Omega\frac{u-\frac{\kappa}{\mu}}{u}u_t+2\delta\int_\Omega (v-\frac{\alpha\kappa}{\beta\mu})v_t\\
&\quad +2\delta \int_\Omega (v-\frac{\alpha\kappa}{\beta\mu})(d_2\Delta  v  -\beta v+\alpha u)\\
&=\int_\Omega\frac{u-\frac{\kappa}{\mu}}{u}\Bigr(\nabla \cdot (d_1\nabla u-\chi u\nabla v)+u(\kappa -\mu u)\Bigr)\\
&\quad -2d_2\delta \int_\Omega |\nabla v|^2+2\delta\int_\Omega(v-\frac{\alpha\kappa}{\beta\mu})(-\beta v+\alpha u) \\
&=-\frac{\kappa}{\mu}d_1\int_\Omega \frac{|\nabla u|^2}{u^2}+\frac{\kappa}{\mu}\chi \int_\Omega \frac{\nabla u}{u}\cdot\nabla v-\mu\int_\Omega (u-\frac{\kappa}{\mu})^2\\
&-2d_2\delta \int_\Omega |\nabla v|^2-2\beta\delta\int_\Omega (v-\frac{\alpha\kappa}{\beta\mu})^2+2\alpha\delta\int_\Omega (u-\frac{\alpha\kappa}{\beta\mu})(v-\frac{\alpha\kappa}{\beta\mu})\\
&\leq -2d_2(\delta-\frac{\kappa\chi^2}{8d_1d_2\mu}) \int_\Omega |\nabla v|^2-(\mu-2\alpha\delta\epsilon)\int_\Omega (u-\frac{\kappa}{\mu})^2\\
&\quad \quad \quad \quad \quad \quad \ \ \ \ \ -2\delta(\beta-\frac{\alpha}{4\epsilon})\int_\Omega (v-\frac{\alpha\kappa}{\beta\mu})^2\\
&=-(\mu-2\alpha\delta\epsilon)\int_\Omega (u-\frac{\kappa}{\mu})^2-2\delta(\beta-\frac{\alpha}{4\epsilon})\int_\Omega (v-\frac{\alpha\kappa}{\beta\mu})^2
\end{split}\end{equation}
for any $\epsilon>0$. Now, we wish to minimize $\mu$ by choosing
\be\label{e-con}
\left\{ \begin{array}{ll}
 \mu-2\alpha\delta\epsilon>0,\\[0.2cm]
 \beta-\frac{\alpha}{4\epsilon}>0.
 \end{array}\right.\Longleftrightarrow \left\{ \begin{array}{ll}
 \mu>\frac{\kappa\alpha\chi^2}{4d_1d_2\mu}\epsilon,\\[0.2cm]
 \frac{\alpha}{4\beta}<\epsilon<\frac{4d_1d_2\mu^2}{\alpha\kappa \chi^2}.
 \end{array}\right. \Longleftrightarrow\mu^2>\frac{\kappa\alpha^2\chi^2}{16d_1d_2\beta}.
\ee
The latter is guaranteed by our assumption \eqref{mu-con}.
Now, for fixed $\epsilon$ obeying \eqref{e-con}, we set
\be\label{eta-con}
\eta=\min\Bigr\{(\mu-2\alpha\delta\epsilon),2\delta(\beta-\frac{\alpha}{4\epsilon}) \Bigr\}=\min\Bigr\{(\mu-\frac{\alpha \kappa\chi^2}{4d_1d_2\mu}\epsilon),\frac{\kappa\chi^2}{4d_1d_2\mu}(\beta-\frac{\alpha}{4\epsilon})\Bigr\},
\ee
and then we infer from \eqref{diff-H2}, \eqref{e-con} and \eqref{eta-con} that
\begin{equation}\label{diff-H}
\frac{d}{dt} H(t) \leq -\eta\Big(\int_\Omega (u-\frac{\kappa}{\mu})^2+\int_\Omega (v-\frac{\alpha\kappa}{\beta\mu})^2\Bigr).
 \end{equation}
 Since $H(t)\geq 0$, an integration of \eqref{diff-H} from any $t_0\geq 0$ to $t$ yields
 $$
\eta \int_{t_0}^t\Big(\int_\Omega (u-\frac{\kappa}{\mu})^2+\int_\Omega (v-\frac{\alpha\kappa}{\beta\mu})^2\Bigr)\leq \frac{H(t_0)}{\eta}
 $$
 giving trivially
 \be\label{int-bdd}
 \int_{t_0}^\infty\Big(\int_\Omega (u-\frac{\kappa}{\mu})^2+\int_\Omega (v-\frac{\alpha\kappa}{\beta\mu})^2\Bigr)\leq \frac{H(t_0)}{\eta}.
 \ee
 Thanks to Theorem \ref{main thm}, the condition $\mu>\mu_0$ ensures that $(u,v)$ is globally bounded and classical. Then from the parabolic regularity, we see that $\int_\Omega (u-\frac{\kappa}{\mu})^2+\int_\Omega (v-\frac{\alpha\kappa}{\beta\mu})^2$ is uniformly bounded and uniformly continuous in $t$. This allows one to deduce from
 \eqref{int-bdd} that
 \be\label{int-lt}
  \int_\Omega (u-\frac{\kappa}{\mu})^2+\int_\Omega (v-\frac{\alpha\kappa}{\beta\mu})^2 \rightarrow 0, \quad \text{ as } t\rightarrow \infty.
 \ee
 Since $u,v$ are smooth and bounded, the standard parabolic regularity for parabolic  equations (cf. \cite{La})  shows the existence of $\sigma\in(0, 1)$ and $C$ such that
 \be\label{reg-uv}
 \|u\|_{C^{2+\sigma, 1+\frac{\sigma}{2}}(\bar{\Omega}\times[t,t+1])}+\|v\|_{C^{2+\sigma, 1+\frac{\sigma}{2}}(\bar{\Omega}\times[t,t+1])}\leq C, \quad \forall t\geq 1.
 \ee
 Hence, in view of the Gagliardo-Nirenberg inequality, \eqref{int-lt} and \eqref{reg-uv}, we obtain
\be\label{u-infty-0}\ba{ll}
\|u(\cdot,t)-\frac{\kappa}{\mu}\|_{L^\infty(\Omega)}&\leq C_{GN} \|u (\cdot,t)-\frac{\kappa}{\mu}\|_{W^{1,\infty}(\Omega)}^{\frac{n}{n+2}}\|u -\frac{\kappa}{\mu}\|
_{L^2(\Omega)}^{\frac{2}{n+2}}\\[0.25cm]
&\leq C\|u(\cdot,t)-\frac{\kappa}{\mu}\|
_{L^2(\Omega)}^{\frac{2}{n+2}} \rightarrow 0, \text{ as } t\rightarrow \infty.
\ea
\ee
In the same way, we get
\be\label{v-infty-0}
\|v(\cdot,t)-\frac{\alpha\kappa}{\beta\mu}\|_{L^\infty(\Omega)}
\leq C\|v(\cdot,t)-\frac{\alpha\kappa}{\beta\mu}\|
_{L^2(\Omega)}^{\frac{2}{n+2}} \rightarrow 0, \text{ as } t\rightarrow \infty.
\ee
 Based on the definition of $H$ in \eqref{H-def},  \eqref{u-infty-0} and \eqref{diff-H}, we calculate via the L'Hospital rule that
$$
\lim_{u\rightarrow \frac{\kappa}{\mu}}\frac{u-\frac{\kappa}{\mu}-\frac{\kappa}{\mu}\ln (\frac{\mu}{\kappa}u)}{(u-\frac{\kappa}{\mu})^2}=\frac{\mu}{2 \kappa}.
$$
This together with \eqref{u-infty-0} allows one to find $t_1\geq 0$ such that
$$
\frac{\mu}{4\kappa}(u-\frac{\kappa}{\mu})^2\leq u-\frac{\kappa}{\mu}-\frac{\kappa}{\mu}\ln (\frac{\mu}{\kappa}u)\leq \frac{\mu}{\kappa}(u-\frac{\kappa}{\mu})^2, \quad \quad t\geq t_1,
$$
and then \eqref{H-def} entails
\be\label{h-com}
 \min\{\frac{\mu}{4\kappa},\delta\} \Big(\int_\Omega (u-\frac{\kappa}{\mu})^2+\int_\Omega (v-\frac{\alpha\kappa}{\beta\mu})^2\Bigr)\leq H(t), \quad \quad t\geq t_1
\ee
and
\be\label{h-com2}
 H(t)\leq \max\{\frac{\mu}{\kappa},\delta\} \Big(\int_\Omega (u-\frac{\kappa}{\mu})^2+\int_\Omega (v-\frac{\alpha\kappa}{\beta\mu})^2\Bigr), \quad \quad t\geq t_1.
\ee
Combining \eqref{diff-H} and \eqref{h-com2}, we obtain an ordinary differential inequality for $H$:
$$
\frac{d}{dt} H(t)\leq -\frac{\eta}{\max\{\frac{\mu}{\kappa},\delta\}}H(t), \quad t\geq t_1,
$$
directly yielding
$$
H(t)\leq H(t_1)e^{-\frac{\eta}{\max\{\frac{\mu}{\kappa},\delta\}}(t-t_1)},  \quad t\geq t_1.
$$
This in conjunction with \eqref{h-com2} concludes that
$$
\int_\Omega (u-\frac{\kappa}{\mu})^2+\int_\Omega (v-\frac{\alpha\kappa}{\beta\mu})^2\leq \frac{H(t_1)}{ \min\{\frac{\mu}{4\kappa},\delta\}}e^{-\frac{\eta}{\max\{\frac{\mu}{\kappa},\delta\}}(t-t_1)},  \quad t\geq t_1.
$$
With this decay estimate at hand, we then derive from \eqref{u-infty-0} and \eqref{v-infty-0} that there exists a large constant $C>0$ such that
$$
\|u(\cdot, t)-\frac{\kappa}{\mu}\|_{L^\infty(\Omega)}+\|v(\cdot, t)-\frac{\alpha\kappa}{\beta\mu}\ \|_{L^\infty(\Omega)}\leq Ce^{-\frac{\eta}{(n+2)\max\{\frac{\mu}{\kappa},\delta\}}(t-t_1)},  \quad t\geq t_1.
$$
Substituting into the definitions of $\delta$ in \eqref{H-def}, $\eta$ in \eqref{eta-con} and taking $\epsilon=\epsilon_0$ in \eqref{e-con}, we obtain the exponential decay estimate \eqref{u+v-conv-i} by choosing large constant $C$.

In the case of $\kappa=0$, the uniform boundedness and global existence of solution does not affect as long as $\mu>\mu_0$. We integrate the first equation in \eqref{min-std}  and use Holder inequality to obtain
$$
\frac{d}{dt}\int_\Omega u=-\mu\int_\Omega u^2\leq -\mu |\Omega|^{-1 }\Big(\int_\Omega u)^{2}, \quad t>0,
$$
which entails
\be\label{int-u-0-a=0}
\int_\Omega u\leq \Bigr[(\int_\Omega u_0)^{-1}+\mu|\Omega|^{-1}t\Bigr]^{-1}\leq \frac{c_1}{t+1}, \quad t> 0.
\ee
Hence, the Gagliardo-Nirenberg inequality coupled with the boundedness of $u$ shows that
\be\label{u-infty-0-a=0}\ba{ll}
\|u(\cdot,t)\|_{L^\infty(\Omega)}
&\leq C_{GN}\|u(\cdot,t)\|_{W^{1,\infty}(\Omega)}^{\frac{n}{n+1}}   \|u(\cdot,t) \|
_{L^1(\Omega)}^{\frac{1}{n+1}}\\[0.25cm]
&\leq C \Bigr[(\int_\Omega u_0)^{-1}+\mu|\Omega|^{-1}t\Bigr]^{-\frac{1}{n+1}}, \quad t> 0.
\ea
\ee
An integration of the second equation in \eqref{min-std} shows
\be\label{v-gron}
\frac{d}{dt}\int_\Omega v=-\beta \int_\Omega v+\alpha \int_\Omega u \leq-\beta \int_\Omega v+\frac{c_2}{t+1}.
\ee
Solving this Gronwall inequality shows
$$
\int_\Omega v \leq \|v_0\|_{L^1} e^{-\beta t} +c_2\frac{\int_0^t\frac{e^{\beta s}}{s+1}}{e^{\beta t}}\leq \|v_0\|_{L^1} e^{-\beta t} +\frac{c_3}{t+1}\leq \frac{c_4}{t+1},
$$
where we used the fact that
$$
\lim_{t\rightarrow \infty}\frac{(t+1)\int_0^t\frac{e^{\beta s}}{s+1}}{e^{\beta t}}=\frac{1}{\beta}<\infty, \quad \lim_{t\rightarrow \infty}(t+1)e^{-\beta t}=0.
$$
Then we conclude from \eqref{u-infty-0-a=0} with $u$ replaced by $v$ that, $t>0$,
\be\label{v-infty-0-a=0}
\|v(\cdot,t)\|_{L^\infty(\Omega)}\leq \frac{c_4}{(t+1)^{n+1}}
\ee
In the case of $\kappa<0$, we integrate the first equation in \eqref{min-std} to get
$$
\frac{d}{dt}\int_\Omega u=\kappa\int_\Omega u-\mu\int_\Omega u^{2}\leq \kappa \int_\Omega u, \quad t>0,
$$
and thus
\be\label{int-u-0-a<0}
\int_\Omega u\leq e^{\kappa t } \int_\Omega u_0, \quad t> 0.
\ee
Then the GN inequality \eqref{u-infty-0-a=0} implies
\be\label{u-infty-0-a<0}
\|u(\cdot,t)\|_{L^\infty(\Omega)}
\leq c_5e^{\frac{\kappa }{n+1}t },   \quad t> 0.
\ee
Combining \eqref{int-u-0-a<0} and \eqref{v-gron}, we have
$$
\|v(\cdot,t)\|_{L^1}\leq \left\{ \begin{array}{ll}
\|v_0\|_{L^1}e^{-\beta t}+c_6t e^{-\beta t}, \text{ if } \beta=-\kappa, \\[0.2cm]
\|v_0\|_{L^1}e^{-\beta t}+c_7\frac{e^{\kappa t}- e^{-\beta t}}{\beta+\kappa}, \text{ if } \beta\neq -\kappa. \end{array}\right.\leq c_9e^{-\frac{1}{2}\min\{\beta, -\kappa\}t}
$$
With this decay estimate  at hand, the GN inequality \eqref{u-infty-0-a=0} gives rise to
\be\label{v-infty-0-a<0}
\|v(\cdot,t)\|_{L^\infty(\Omega)}\leq c_{10} e^{-\frac{1}{2(n+1)}\min\{\beta, -\kappa\}t}, \forall t\geq 0.
\ee
Extracting  the essential ingredients of the estimates \eqref{u-infty-0-a=0}, \eqref{v-infty-0-a=0}, \eqref{u-infty-0-a<0} and  \eqref{v-infty-0-a<0}, we readily conclude the decay estimates  \eqref{u+v-conv-i},  \eqref{u+v-conv-ii} and \eqref{u+v-conv-iii}.
 \end{proof}

\textbf{Acknowledgments}   This work  was  funded by the National Natural Science Foundation of China (No. 11601516 and   11571364), the Fundamental Research Funds for the Central Universities and the Research Funds of Renmin University of China (No. 15XNLF10).
.

\end{document}